\renewcommand{\cases}[1]{\left\{ \begin{array}{rl} #1 \end{array} \right.}
\newcommand{\smfrac}[2]{{\textstyle \frac{#1}{#2}}}
\def\Xint#1{\mathchoice
{\XXint\displaystyle\textstyle{#1}}%
{\XXint\textstyle\scriptstyle{#1}}%
{\XXint\scriptstyle\scriptscriptstyle{#1}}%
{\XXint\scriptscriptstyle\scriptscriptstyle{#1}}%
\!\int}
\def\XXint#1#2#3{{\setbox0=\hbox{$#1{#2#3}{\int}$ }
\vcenter{\hbox{$#2#3$ }}\kern-.6\wd0}}
\def\mint{\Xint-}
\newlength{\dhatheight}
\newlength{\dtildeheight}
\def\b{\big}
\def\B{\Big}
\def\bg{\bigg}
\def\sep{\,|\,}
\def\R{\mathbb{R}}
\def\N{\mathbb{N}}
\def\dt{\,{\rm d}t}
\def\dd{{\rm d}}
\def\<{\langle}
\def\>{\rangle}
\def\D{\nabla}
\def\E{\mathcal{E}}
\definecolor{cocol}{rgb}{0.7, 0, 0}
\definecolor{nickcol}{rgb}{0, 0.7, 0}
\definecolor{davecol}{rgb}{0, 0, 0.7}
\begin{document}

\title[An Efficient Dimer Method]{An Efficient Dimer Method with
  Preconditioning and Linesearch}

\author{N. Gould}
\address{N.I.M.\ Gould \\ Scientific Computing Department \\
STFC-Rutherford Appleton Laboratory \\ Chilton, OX11 0QX \\ UK}
\email{nick.gould@stfc.ac.uk}

\author{C. Ortner}
\address{C. Ortner\\ Mathematics Institute \\ Zeeman Building \\
  University of Warwick \\ Coventry CV4 7AL \\ UK}
\email{christoph.ortner@warwick.ac.uk}

\author{D. Packwood}
\address{D. Packwood\\ Mathematics Institute \\ Zeeman Building \\
  University of Warwick \\ Coventry CV4 7AL \\ UK}
\email{d.packwood@warwick.ac.uk}

\date{\today}

\thanks{This work was supported in part by EPSRC grants EP/J021377/1
  and EP/J022055/1.}

\subjclass[2000]{65K99, 90C06, 65Z05}


\keywords{saddle search, perconditioning, convergence, dimer method}

\begin{abstract}
  The dimer method is a Hessian-free algorithm for computing saddle
  points. We augment the method with a linesearch mechanism for
  automatic step size selection as well as preconditioning
  capabilities. We prove local linear convergence. A series of
  numerical tests demonstrate significant performance gains.
\end{abstract}

\maketitle


\section{Introduction}

The problem of determining saddle points on high dimensional surfaces
has received a great deal of attention from the chemical physics
community over the past few decades. These surfaces arise, in particular,
as potential energies of molecules or materials. The local minima of
such functions describe stable atomistic configurations, while saddle
points provide information about the transition rates between minima in the
harmonic approximation of transition state theory. Independently, they
are useful for mapping the energy landscape and are used to inform
accelerated MD type schemes such as hyperdynamics
\cite{Voter:prl1997,Uberuaga:2005} or kinetic Monte Carlo (KMC)
\cite{Voter:2007}.

While the problem of determining the minima of such an energy function
is well known in the numerical analysis community, the problem of
locating saddles point has received little attention. 
Saddle search algorithms can be broadly categorised into two groups.

The first group has been called `chain of states' methods. A chain of
`images' are placed on the energy surface, often the two end points of
the chain are placed at two different local minima, for which the
connecting saddle is being sought. The chain is then `relaxed' by some
dynamics for which the mininum energy path (MEP) is (thought to be) an
attractor. Two archetypical methods of this class are the nudged elastic
band (NEB) method \cite{Jonsson:1998} and the string method
\cite{Weinan:prb2002,Weinan:jcp2007}.

The second group of methods for finding the saddle have been called
`walker' methods. Here a single `image' moves from its initial point
(sometimes, but not obligatorily, a local minimum) until it becomes
sufficiently close to a saddle point. The first method to work in this
framework was Rational Function Optimization (RFO) and later its
derivative, the Partitioned RFO (PRFO)\cite{Cerjan:jcp1981,
  Simons:jpc1983, Banerjee:jpc1985}. Here, the full eigenstructure of
the Hessian is explicitly calculated and then one or more eigenvalues
are manually shifted. In particular, if the minimum eigenvalue is
shifted in the correct manner, and a Newton step is applied using the
resultant modified Hessian, then the walker moves uphill in the
direction corresponding to the lowest eigenvector and downhill in all
other directions. If the Hessian is expensive to calculate, or even
unavailable, it can be approximated as the computation proceeds 
by any variety of techniques, for example the symmetric rank-one 
approximation \cite{Murtagh:cj1970}. Of course any useful Hessian
approximation should necessarily have the flexibility to be
indefinite. Other walker type techniques are satisfied with computing
the lowest eigenpair only. One such technique is the Activation Relaxation
Technique (ART) nouveau \cite{Marinica:prb2011,Machado-Charry:jcp2011,
  Mousseau:jamop2012,Cances:jcp2009}.  The original ART method used an
ascent step not along the minimum eigenvector, but along a line drawn
between the image and a known local minimum
\cite{Barkema:prl1996,Barkema:cms2001}. In ART nouveau this is
replaced by the minimum eigenpair which is calculated by means of the
Lanczos \cite{Lanc50} method.

The technique which forms the basis of the present paper, is the {\em
  dimer method} \cite{HenkJons:jcp1999,Heyden:jcp2005}. In this method
a pair of `walkers' is placed on the energy surface and aligned with
the minimum eigenvector (irrespective of the sign of the corresponding
eigenvalue) by minimizing the sum of the energies at the two end
points. This can be thought of as the computation of the minimal
eigenvalue using a finite difference approximation to the Hessian
matrix. In practice this `rotation step' is not converged
to great precision. More advanced modifications 
can be used to improve walker search directions, e.g., an L-BFGS
\cite{LiuNoce89} scaling, rather than just using a default steepest
descent type scheme \cite{Kastner:jcp2008}.

In the only rigorous analysis of the dimer method that we are aware of
Zhang and Du \cite{ZhangDu:sinum2012} prove local convergence of a
variation where the `dimer length' (the separation distance between
the two walkers) shrinks to zero. In that work the dimer evolution is
treated as a dynamical system, and the stability of different types of
equilibria is investigated.

In the present paper we present three new results:
\begin{enumerate}
\item We augment the dimer method with preconditioning capabilities to
  improve its efficiency for ill-conditioned problems, in particular
  with an eye to high-dimensional molecular energy landscapes. This
  modification is based on the elementary observation that the dimer
  method can be formulated with respect to an arbitrary inner product.
  (The $\ell^2$-inner product was previously used exclusively.)

\item We introduce a linesearch procedure. To that end, the main
  difficulty is the absence of a merit function for saddles. Instead,
  we proposed a {\em local merit function}, which we minimise at each
  dimer iteration using traditional linesearch strategies from
  optimisation, and which is updated between steps.

\item We present a variation of the analysis of Zhang and Du
  \cite{ZhangDu:sinum2012} that demonstrates that it is unnecessary to
  shrink the dimer length, $h$, to zero. Indeed, shrinking $h$ can
  cause severe numerical difficulties due to round-off. We prove that,
  if it is kept fixed, then the dimer walkers converge to a point that
  lies within $O(h^2)$ of a saddle. We also extend this analysis to
  incorporate preconditioning and linesearch.
\end{enumerate}


Concerning (2), it would of course be preferable to construct a global
merit function as this would provide a path towards constructing a
globally convergent scheme. Indeed, our (non-trivial) generalisation
of the convergence analysis to the linesearch variant of the dimer
method only yields local results, and we even present counterexamples
to global convergence.

The paper is organised as follows: having established preliminary
concepts, we describe two variants of the basic dimer method, and
establish their local convergence, in \S2. A linesearch enhancement is
proposed, and its local convergence behaviour is analysed, in
\S3. Numerical experiments illustrating the advantages of the
linesearch are given in \S4. We conclude in \S5.  Full details of our
analysis are given in Appendix A.

\section{Local Convergence of the Dimer Method}

\subsection{Preliminaries}
Let $X$ be a Hilbert space with norm $\| x \|$ and inner product $x
\cdot y$. We write $x \perp y$ if $x \cdot y = 0$. $I : X \to X$
denotes the identity. For $x, y \in X$, $x\otimes y : X \to X$ denotes
the operator defined by $(x \otimes y) z = (y\cdot z) x$.

Given two real functions $f$ and $g$ defined in some neighbourhood 
$\mathcal{N}$ of the origin, we say that $f(x) = O(g(x))$ 
as $x \rightarrow 0$ if $|f(x)| \leq C |g(x)|$
for some constant $C > 0$ and all $x \in \mathcal{N}$.

For a bounded linear operator $A \in L(X)$ we denote its spectrum by
$\sigma(A)$. We say that $(\lambda, v) \in \R \times X$ is an
eigenpair if $A v = \lambda v$. If $(\lambda, v)$ is an eigenpair and
$\lambda = \inf \sigma(A)$, then we call it a minimal eigenpair. We
say that $A$ has {\em index-1 saddle structure} if there exists a
unique minimal eigenpair $(\lambda, v)$ with $\lambda < 0$ and $A$ is
positive definite in $\{ v \}^\perp$.

If $F : X \to \R$ is Fr\'echet differentiable at a point $x$ then we
denote its {\em gradient} by $\D F(x)$, i.e., 
\begin{displaymath}
  \D F(x) \cdot y = \lim_{t \to 0} t^{-1}(F(x+t y) - F(x)).
\end{displaymath}
(Note that $\D F(x)$ is the Riesz representation of the first
variation $\delta F(x) \in X^*$.) Similarly, if $F : X \to X$ is
Fr\'echet differentiable at $x$, then $\D F(x) \in L(X)$ is a bounded
linear operator satisfying $\D F(x) u = \lim_{t\to 0} t^{-1} (F(x+t u)
- F(x))$. In particular, if $F : X \to \R$, then the Hessian $\D^2
F(x) \in L(X)$ (rather than $\D^2 F(x) : X \to X^*$). Higher
derivatives are defined analogously, but we shall avoid their explicit
use as much as possible.

%
We say that $x_*$ is an {\em index-1 saddle} of $E$ if
\begin{equation}
  \label{eq:defn:index1-saddle}
  \D E(x_*) = 0 \quad \text{and} \quad
  \D^2 E(x_*) \text{ has index-1 saddle structure.}
\end{equation}
With slight abuse of notation, we shall also call $(x_*, v_*,
\lambda_*)$ an index-1 saddle if $x_*$ is an index-1 saddle and $(v_*,
\lambda_*)$ the associated minimal eigenpair.

Given a dimer length $h$ and a vector $v \in S_1 := \{ u \in X \sep \|
u \| = 1 \}$, we define
\begin{align*}
  \E_h(x, v) &:= \smfrac12 \b( E(x+h v) + E(x - h v) \b) \quad
  \text{and} \\
  E_h(x) &:= \inf_{v \in S_1} \E_h(x, v).
\end{align*}
If $\#\arg\min_{v \in S_1} \E_h(x, v) = 1$, then we also define
\begin{displaymath}
  V(x) := \arg\min_{v \in S_1} \E_h(x, v) 
\end{displaymath}
and we can then write $E_h(x) = \E_h(x, V(x))$.

Finally, we observe that
\begin{align}
  \label{eq:err_DxEh}
\D_x \E_h(x, v) &= \smfrac12 \b( \D E(x+hv) + \D E(x-hv)\b) = \D E(x)
+ O(h^2), \\
  \label{eq:err_Dx2Eh}
\D_x^2 \E_h(x, v) &= \smfrac12 \b( \D^2 E(x+hv) + \D^2 E(x-hv)\b) = \D^2 E(x)
+ O(h^2), \\
  \D_v \E_h(x, v) &= \smfrac{h}{2} \b( \D E(x+hv) - \D E(x-hv) \b) = 
  h^2 \D^2  E(x) v + O(h^4) \quad \mbox{and} \label{eq:defn:errorH} \\
  \D^2_v \E_h(x, v) &= \smfrac{h^2}{2} \b( \D^2 E(x+hv) + \D^2 E(x-hv) \b) = 
  h^2 \D^2  E(x) + O(h^4), \label{eq:defn:dtwoh}, \\
  \label{eq:err_DxDvEh}
  \D_x \D_v \E_h(x, v) &= h^2 \D^3 E(x) \cdot v + O(h^4).
\end{align}
where we note that these errors are uniform whenever $x$ remains in a
bounded set. For future reference, we define the discrete Hessian
operator
\begin{equation}
  \label{eq:defn:Hh}
  H_h(x; v) := h^{-2} \D_v \E_h(x, v).
\end{equation}

\subsection{Two basic dimer variants}
\label{eq:dimer_variants}
We now make precise two basic variants of the dimer method. The first
algorithm is a variation of the original dimer method
\cite{HenkJons:jcp1999,OlsenKroes:jcp2004}, alternating steps in the
position ($x_k$) and direction ($s_k$) variables, but employs 
a modification proposed
by \cite{ZhangDu:sinum2012}. Indeed, the following algorithm can be
thought of as \cite{ZhangDu:sinum2012} with $\lambda$ ($h$ in our
case) taken to be constant instead of $h \to 0$ as $k \to \infty$.

\medskip

{\bf Algorithm 1}
\begin{itemize}
\item[(0)] Choose $x_0, v_0 \in X$ with $\|v_0\| = 1$, $h > 0$ and
  step lengths $(\alpha_k)_{k \in \N}, (\beta_k)_{k \in \N}$.
\item[(1)] For $n = 0, 1, 2, \dots $ do 
\item[(2)] \qquad $s_{k} := - (I-v_k \otimes v_k) h^{-2} \D_v \E_h(x_k, v_k)$
\item[(3)] \qquad $v_{k+1} :=  \cos(\| s_k \| \beta_k) v_k + \sin(\|
  s_k \| \beta_k) \frac{s_k}{\|s_k\|}$ 
\item[(4)] \qquad $x_{k+1} := x_k - \alpha_k (I-2 v_{k} \otimes v_{k}) \D_x
  \E_h(x_k, v_{k})$.
\end{itemize}

\medskip

Our second variant of the dimer method that we consider is closer in
spirit to the class of walking methods which employ the minimal
eigenpair. These include Rational Function Optimization (RFO)
\cite{Cerjan:jcp1981, Simons:jpc1983, Banerjee:jpc1985}, which uses
either an exact or approximate Hessian directly, or the Activation
Relaxation Technique nouveau (ART Nouveau)\cite{Marinica:prb2011,
  Machado-Charry:jcp2011, Mousseau:jamop2012}, which uses the Lanczos
method to find the minimal eigenvector. This modification of the dimer
method can also be motivated by observations in
\cite{OlsenKroes:jcp2004} that undertaking more accurate rotation
steps may lead to fewer iterations. As an idealised variant of this
idea we consider a dimer algorithm where, at each iteration, an exact
rotation $v$ is computed.


\medskip

{\bf Algorithm 2}
\begin{itemize}
\item[(0)] Choose $x_0 \in X, h > 0, (\alpha_k)_{k \in \N}$.
\item[(1)] For $k = 0, 1, 2, \dots$ do
\item[(2)] \qquad $v_k \in \arg\min_{\| v \| = 1} \E_h(x_k, v)$ 
\item[(3)] \qquad $x_{k+1} = x_k - \alpha_k (I-2v_k\otimes v_k) \D_x
  \E_h(x_k, v_k)$
\end{itemize}

\begin{remark}
  1. Algorithm 1 is clearly well-defined. Algorithm 2 is well-defined
  if ${\rm dim}(X) < \infty$, however, step (2) in Algorithm 2 is not
  necessarily well-defined in Hilbert space. We shall show in Theorem
  \ref{th:local_conv_dimer}(b) that this step is well-defined if the
  starting guess is close to a saddle point. In practice, the minimisation
  with respect to $v$ may only be performed to within a specified
  tolerance (see \S\ref{sec:dimer_linesearch}).

  2.  Both Algorithm 1 and Algorithm 2 may be rewritten such that a
  step in the position variable $x$ is performed by employing the
  gradient $\D E(x)$ instead of the averaged gradient $\D_x \E_h(x_k,
  v_k)$. For the sake of uniformity and simplicity of presentation we
  do not explicitly consider these as well.

  However, we note that (1) all our results can be extended to these
  variants, and (2) it seems to us that this has minor effects on the
  accuracy and efficiency of the algorithms, with the exception that
  it requires additional gradient evaluations. 

  Instead, it might be advisable to ``post-process'' the dimer
  Algorithms 1 and 2 using such a modified scheme. Namely, we shall
  prove that Algorithms 1 and 2 converge to a point $(x_h, v_h)$ that
  is $O(h^2)$ close to an index-1 saddle. Post-processing would then
  yield the exact saddle point.

  3. A natural variant of step (4) of Algorithm 1 is to replace it with
  \begin{displaymath}
    x_{k+1} := x_k - \alpha_k (I-2 v_{k+1} \otimes v_{k+1}) \D_x
  \E_h(x_k, v_{k+1}).
  \end{displaymath}
  We have observed that, in practise, this does not change the number
  of iterations required to reach a specified residual, but that it
  doubles the number of force (gradient) evaluations. Note that with the
  formulation we use, $\D_v \E_h(x_k, v_k) = \smfrac12(\D
  E(x_k+hv_k)-\D E(x_k-h v_k))$, and $\D_x \E_h(x_k, v_k) = \smfrac12
  (\D E(x_k+h v_k) + \D E(x_k+h v_k))$ and therefore only two force
  evaluations $\D E(x_k \pm h v_k)$ are required. The variant proposed
  in item 2. of the present remark would require three force
  evaluations in each step.
\end{remark}

\subsection{The dimer saddle}
Our first observation is that the dimer method (in both variants we
consider) approximates the Hessian by a finite difference and the
gradient by an average. Therefore, the dimer iterates ($x_k, v_k$)
with fixed dimer length $h$ {\em cannot} in general converge to a
saddle but only to a critical point $(x_h, v_h)$ near a saddle, satisfying 
\begin{equation}
  \label{eq:dimersaddle}
\D_x \E_h(x_h, v_h) = 0 \;\; \mbox{and} \;\; 
 (I-v_h\otimes v_h) \D_v \E_h(x_h, v_h) = 0. 
\end{equation}
The existence (and local uniqueness) of such
critical points is established in the following result.

\begin{proposition}
  \label{th:dimer_saddle}
  Let $(x_*, v_*, \lambda_*)$ be an index-1 saddle, then there exists
  $h_0 > 0$ such that, for all $h \leq h_0$, there exist $x_h, v_h \in
  X$, $\lambda_h \in \R$ and a constant $C$, such that
  \begin{equation}
    \label{eq:dimer_saddle_system}
    \begin{split}
\D_x \E_h(x_h, v_h) \equiv \hspace*{2mm}    
 \smfrac12 \b( \D E(x_h +h v_h) + \D E(x_h - h v_h)\b)  &= 0, \\
\smfrac1{h^2} \D_v\E_h(x_h, v_h) \equiv
      \smfrac1{2h} \b( \D E(x_h + h v_h) - \D E(x_h - h v_h) \b) &=
      \lambda_h v_h, \\
      \smfrac12 \| v_h \|^2 &= \smfrac12.
    \end{split}
  \end{equation}
  and moreover
  \begin{equation}
    \label{eq:dimer_saddle_error}
    \| x_h - x_* \| + \| v_h - v_*\| + |\lambda_h - \lambda_*| \leq C h^2.
  \end{equation}
\end{proposition}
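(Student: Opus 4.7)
The plan is to recast the system \eqref{eq:dimer_saddle_system} as a single equation $F(x,v,\lambda,h)=0$ and apply the implicit function theorem around $(x_*,v_*,\lambda_*,0)$. Using the expansions \eqref{eq:err_DxEh} and \eqref{eq:defn:errorH}, the quantities $\D_x\E_h(x,v)$ and $h^{-2}\D_v\E_h(x,v)$ extend smoothly to $h=0$ with values $\D E(x)$ and $\D^2 E(x)v$ respectively, and moreover these expansions depend only on $h^2$. So I would define
\begin{equation*}
  F(x,v,\lambda,h) := \begin{pmatrix}
    \D_x \E_h(x,v) \\
    h^{-2}\D_v\E_h(x,v) - \lambda v \\
    \tfrac12(\|v\|^2 - 1)
  \end{pmatrix},
\end{equation*}
extended by continuity at $h=0$ to $(\D E(x),\,\D^2E(x)v-\lambda v,\,\tfrac12(\|v\|^2-1))$. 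By the definition of an index-1 saddle we have $F(x_*,v_*,\lambda_*,0)=0$.

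The heart of the argument is to verify that the partial derivative $L := D_{(x,v,\lambda)}F(x_*,v_*,\lambda_*,0)$ is a bounded linear isomorphism on $X\times X\times\R$. A direct computation gives
\begin{equation*}
  L\begin{pmatrix}\xi_x\\ \xi_v\\ \xi_\lambda\end{pmatrix}
  = \begin{pmatrix}
    \D^2 E(x_*)\,\xi_x \\
    (\D^2 E(x_*) - \lambda_* I)\xi_v + B\xi_x - \xi_\lambda v_* \\
    v_* \cdot \xi_v
  \end{pmatrix},
\end{equation*}
where $B$ comes from differentiating $\D^2E(x)v_*$ in $x$. To show injectivity, the first row forces $\xi_x=0$ since $\D^2 E(x_*)$ has only nonzero eigenvalues ($\lambda_*<0$ and the rest positive by the index-1 structure). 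Splitting $\xi_v = c\, v_* + \xi_v^\perp$ with $\xi_v^\perp\in\{v_*\}^\perp$, the second row projected onto $v_*$ yields $\xi_\lambda = 0$ (using that $\lambda_*$ is a simple eigenvalue and $\|v_*\|=1$), and projected onto $\{v_*\}^\perp$ yields $(\D^2 E(x_*)-\lambda_* I)\xi_v^\perp = 0$; but this operator is an isomorphism on $\{v_*\}^\perp$ because every other eigenvalue of $\D^2E(x_*)$ exceeds $\lambda_*$, so $\xi_v^\perp = 0$. Finally the third row gives $c=0$. Surjectivity follows by the same triangular-solve argument with right-hand side in place of zero; this is the step I expect to require the most care in the Hilbert-space setting, but index-1 saddle structure provides exactly the spectral gap needed.

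With $L$ invertible, the implicit function theorem produces $h_0>0$ and smooth functions $h\mapsto(x_h,v_h,\lambda_h)$ on $[0,h_0]$ satisfying $F(x_h,v_h,\lambda_h,h)=0$ with $(x_0,v_0,\lambda_0)=(x_*,v_*,\lambda_*)$; local uniqueness follows from the same theorem. The second equation of the resulting system, together with $\|v_h\|=1$ and the projection $I-v_h\otimes v_h$ annihilating $\lambda_h v_h$, recovers the system \eqref{eq:dimer_saddle_system} as stated.

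For the quantitative bound \eqref{eq:dimer_saddle_error}, I would observe that each component of $F$ depends on $h$ only through $h^2$ (by \eqref{eq:err_DxEh}--\eqref{eq:defn:dtwoh}, where the odd-order terms in $h$ cancel by the symmetry $v\mapsto-v$ in the central differences). Consequently the implicit solution $(x_h,v_h,\lambda_h)$ is a smooth function of $h^2$, and a first-order Taylor expansion in the variable $h^2$ around $0$ immediately yields
\begin{equation*}
  \|x_h - x_*\| + \|v_h - v_*\| + |\lambda_h - \lambda_*| \leq C h^2,
\end{equation*}
completing the proof. The main technical obstacle is the invertibility of $L$; once that is in hand the remainder is a standard application of IFT combined with the parity observation.
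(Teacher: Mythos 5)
Your proof is correct and follows essentially the same strategy as the paper: one verifies that the linearization of the dimer-saddle system at the exact saddle $(x_*,v_*,\lambda_*)$ is a bounded isomorphism on $X\times X\times\R$ (using exactly the block triangular structure and the spectral gap from the index-1 hypothesis), and concludes by an inverse/implicit function theorem. The only cosmetic difference is that the paper applies the inverse function theorem at each fixed $h$ after directly estimating the residual $\|F(x_*,v_*,\lambda_*)\|=O(h^2)$ by Taylor expansion, whereas you phrase it as an implicit function theorem in the parameter $h^2$ and extract the $O(h^2)$ rate from the parity of the central differences; both are valid and yield the same conclusion.
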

\begin{proof}[Idea of proof]
  The result is a consequence of the inverse function theorem. Comparing
  \eqref{eq:dimer_saddle_system} with the exact saddle $(x_*, v_*,
  \lambda_*)$ a Taylor expansion shows that the residual is of order
  $O(h^2)$. Similarly, the linearisation can be shown to be $O(h^2)$
  close (in operator norm) to the linearisation of the exact saddle
  system $\D E(x_*) = 0, \D^2 E(x_*) v_* = \lambda_* v_*,
  \|v_*\|=1$. The linearisation of the latter is an isomorphism by the
  assumption that $x_*$ is an index-1 saddle. 
  The complete proof is given in \ref{sec:proof_dimer_saddle}.
\end{proof}


We shall refer to a triple $(x_h, v_h, \lambda_h) \in X \times
X \times \R$ that satisfies \eqref{eq:dimer_saddle_system} as a {\em dimer
  saddle}.

\subsection{Local convergence}
\label{sec:local_conv}
We now state local convergence results for the two dimer variants
formulated in Algorithm 1 and Algorithm 2. The main observation is
that Algorithm~1 need not converge monotonically, but that Algorithm 2
is in fact contractive.

\begin{theorem}
  \label{th:local_conv_dimer}
  Let $x_*$ be an index-1 saddle with minimal eigenpair $(\lambda_*,
  v_*)$. Then there exists a radius $r$, a maximal dimer length $h_0$
  and maximal step sizes $\bar\alpha$ and $\bar\beta$ 
 (independent of one another) as
  well as a dimer saddle $(x_h, v_h, \lambda_h)$ satisfying
  \eqref{eq:dimer_saddle_system} such that the following hold
  for all $h \leq h_0$:
  \begin{itemize}
  \item[(a)] Let $x_0 \in B_r(x_*), v_0 \in B_r(v_*), \sup_k \alpha_k
    \leq \bar\alpha, \sup \beta_k \leq \bar\alpha, \inf_k \alpha_k >
    0, \inf \beta_k > 0$, and let $(x_k, v_k)$ be the iterates
    generated by Algorithm 1, then there exist $C > 0, \eta \in (0,
    1)$ such that
    \begin{equation}
      \label{eq:convergence_dimer1}
      \| x_k - x_h \| + \| v_k - v_h \| \leq C \eta^k \b(
      \|x_0-x_h\|+\|v_0-v_h\| \b).
    \end{equation}

  \item[(b)] Let $x_0 \in B_r(x_*), h \leq h_0, \sup_k \alpha_k \leq
    \bar\alpha, \inf_k \alpha_k > 0$, then Algorithm 2 is well-defined
    (i.e., step (2) has a unique solution) and there exists $\eta \in
    (0, 1)$ such that
    \begin{equation}
      \| x_{k+1} - x_h \| \leq \eta \| x_k - x_h \| \qquad \text{for all
        $k \geq 0$.} \label{decrease}
    \end{equation}
    Moreover, there exists a constant $C$ such that $\|v_k - v_h\|\leq
    C \| x_k - x_h \|$.
  \end{itemize}
\end{theorem}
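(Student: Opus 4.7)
The plan is to linearize each dimer iteration around the dimer saddle $(x_h,v_h,\lambda_h)$ from Proposition~\ref{th:dimer_saddle} and exploit the two identities built into \eqref{eq:dimer_saddle_system}: $\D_x\E_h(x_h,v_h)=0$ kills several cross-derivatives in the $x$-update Jacobian, while $h^{-2}\D_v\E_h(x_h,v_h)=\lambda_hv_h$ reduces the rotation-step Jacobian, after restriction to $v_h^\perp$, to an explicit spectral-gap operator. Writing $\xi_k:=x_k-x_h$ and $\nu_k:=v_k-v_h$, every linear operator will be Taylor-expanded via \eqref{eq:err_DxEh}--\eqref{eq:err_DxDvEh} and compared to its exact-$E$ counterpart at $(x_*,v_*)$ using \eqref{eq:dimer_saddle_error}.

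For (b), I would apply the implicit function theorem (after rescaling $\mu=h^2\tilde\mu$) to the Lagrangian system $h^{-2}\D_v\E_h(x,v)=\tilde\mu v$, $\|v\|^2=1$: its linearization in $(v,\tilde\mu)$ at $(x_h,v_h,\lambda_h)$ is the dimer-saddle linearization shown invertible in Proposition~\ref{th:dimer_saddle}, with inverse bounded uniformly in $h$. IFT then produces a $C^1$ selection $V(x)$ on a ball of $h$-independent radius with $V(x_h)=v_h$ and $\|\D V(x_h)\|=O(1)$; local uniqueness of $V(x)$ as the minimizer of $\E_h(x,\cdot)$ on $S_1$ is automatic from the gap condition. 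The iteration reduces to $x_{k+1}=x_k-\alpha_kG(x_k)$ with $G(x):=(I-2V(x)\otimes V(x))\D_x\E_h(x,V(x))$ and $G(x_h)=0$. Differentiating $G$ at $x_h$, every $\D V(x_h)$-contribution is multiplied either by $\D_x\E_h(x_h,v_h)=0$ or by $\D_x\D_v\E_h(x_h,v_h)=O(h^2)$, giving
\begin{equation*}
\D G(x_h)=A+O(h^2),\qquad A:=(I-2v_h\otimes v_h)\D_x^2\E_h(x_h,v_h).
\end{equation*}
Because $v_h$ is an eigenvector of $\D_x^2\E_h(x_h,v_h)$ up to $O(h^2)$, $(I-2v_h\otimes v_h)$ nearly commutes with $\D_x^2\E_h(x_h,v_h)$, so $A$ is self-adjoint modulo $O(h^2)$ with spectrum $\{-\lambda_*\}\cup\sigma(\D^2E(x_*)|_{v_*^\perp})\subset(0,\infty)$. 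For $\bar\alpha<2/\|A\|$ and $h_0$ small, $I-\alpha_k\D G(x_h)$ is a strict contraction with some rate $\eta<1$; absorbing the quadratic Taylor remainder by shrinking $r$ yields \eqref{decrease}, and $\|v_k-v_h\|\leq C\|x_k-x_h\|$ follows from the Lipschitz estimate for $V$.

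For (a), I linearize the coupled map. The $x$-block is $I-\alpha_kA+\alpha_kO(h^2)$ (same $A$ as in (b), with the $O(h^2)$ piece from $(I-2v_h\otimes v_h)\D_x\D_v\E_h(x_h,v_h)$). Using $\cos(t)=1+O(t^2)$, $\sin(t)/t=1+O(t^2)$ at $t=\beta_k\|s_k\|$ gives $v_{k+1}=v_k+\beta_ks_k+O(\beta_k^2\|s_k\|^2)$, and combining $s(x,v)=-(I-v\otimes v)h^{-2}\D_v\E_h(x,v)$ with the dimer identity $h^{-2}\D_v\E_h(x_h,v_h)=\lambda_hv_h$ yields on the linearized constraint subspace $v_h^\perp$
\begin{equation*}
\partial_vs(x_h,v_h)|_{v_h^\perp}=-B,\qquad B:=(I-v_h\otimes v_h)(M_h-\lambda_hI),
\end{equation*}
where $M_h:=h^{-2}\D_v^2\E_h(x_h,v_h)=\D^2E(x_*)+O(h^2)$ and $B|_{v_h^\perp}$ is uniformly positive by the index-1 gap. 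The Jacobian thus takes the block form
\begin{equation*}
J_k=\begin{pmatrix} I-\alpha_kA & O(\alpha_kh^2)\\ \beta_kD & I-\beta_kB\end{pmatrix}+\text{h.o.t.},\qquad D:=-(I-v_h\otimes v_h)h^{-2}\D_x\D_v\E_h(x_h,v_h)=O(1).
\end{equation*}
In the weighted norm $\|(\xi,\nu)\|_c^2:=\|\xi\|^2+c\|\nu\|^2$, Young's inequality on the off-diagonals together with uniform positive lower bounds $c_1,c_2$ for $A,B$ yields an estimate of the form
\begin{equation*}
\|(\xi_{k+1},\nu_{k+1})\|_c^2\leq\b(1-\underline{\alpha}c_1+O(c)\b)\|\xi_k\|^2+c\b(1-\underline{\alpha}c_2+O(h^4/c)\b)\|\nu_k\|^2,
\end{equation*}
and both brackets can be forced strictly below $1-\gamma\,\underline{\alpha}$ by choosing $c$ in a non-empty interval of the form $(C_1h^4,C_2)$ with $C_1,C_2$ depending only on the data and on the step-size bounds; the interval is non-empty once $h_0$ is small. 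Iteration then produces \eqref{eq:convergence_dimer1}.

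The main obstacle is exactly this coupled contraction in (a): because $\beta_kD$ is $O(1)$ in $h$, individual contraction of the two diagonal blocks does not by itself yield a contraction of $J_k$, and the weighted-norm argument closes the gap only because the complementary off-diagonal is $O(h^2)$ — a cancellation enabled by $\D_x\E_h(x_h,v_h)=0$ together with \eqref{eq:err_DxDvEh}. The remaining work (absorbing the nonlinear rotation remainder $O(\beta_k^2\|s_k\|^2)$ and the Taylor remainders of $\D_x\E_h,\D_v\E_h$ into the linear contraction by shrinking $r$) is routine but notationally heavy.
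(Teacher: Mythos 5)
Your argument is correct, and for part (a) it takes a genuinely different route at the final stability step. Both proofs begin the same way: Taylor-expand the dimer updates at the dimer saddle $(x_h,v_h,\lambda_h)$, use $\D_x\E_h(x_h,v_h)=0$ to kill the $v$-derivative of the rotation projector acting on the translation gradient, and use $h^{-2}\D_v\E_h(x_h,v_h)=\lambda_h v_h$ together with \eqref{eq:err_DxDvEh} to reduce the rotation Jacobian restricted to $v_h^\perp$ to a spectral-gap operator. Where you diverge from the paper is in how the coupled linear recursion is shown to be contractive. The paper absorbs the $O(h^2)$ coupling from $\nu_k$ into $\xi_{k+1}$ into the nonlinear remainder $\mathbf{t}_k$, so that the linear part is \emph{strictly} lower block-triangular; it then proves $\|\prod_{i=n}^k(I-\mathbf{A}_i)\|\leq C_1\mu^{k-n+1}$ by an induction that bounds the polynomially-growing off-diagonal block $\|B_k\|\leq C_* k\bar\mu^k$ and absorbs the $k$-factor by choosing $\mu>\bar\mu$, and finally closes the nonlinear step via a Duhamel-type sum. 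You instead keep the $O(h^2)$ coupling explicit in the Jacobian and construct a weighted (Lyapunov) norm $\|(\xi,\nu)\|_c^2=\|\xi\|^2+c\|\nu\|^2$ in which each $J_k$ is already a single-step strict contraction, choosing $c$ in the non-empty range $(C_1 h^4, C_2)$ so that both off-diagonal penalties $O(h^2)/\sqrt{c}$ and $\sqrt{c}\,O(1)$ are small; this sidesteps the matrix-product estimate and the polynomial-growth issue entirely and the nonlinear absorption is then a one-line induction in $\|\cdot\|_c$. Your argument is arguably cleaner and more modular, at the modest cost of needing an $h$-dependent weight; it makes explicit that the contraction hinges on the \emph{product} of the off-diagonal couplings being $O(h^2)$, which is exactly the cancellation from $\D_x\E_h(x_h,v_h)=0$. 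For part (b) your envelope-theorem computation of $\D G(x_h)=A+O(h^2)$ and your use of IFT on the $(v,\tilde\mu)$-block to obtain $V(\cdot)\in C^1$ with uniformly bounded $\D V$ reproduce the content of Lemma~\ref{th:aux_lemma_Eh} and the proof in \S\ref{sec:prfb}; the one place you are a bit brisk is in promoting the locally unique critical point $V(x)$ of $\E_h(x,\cdot)|_{S_1}$ to the global minimiser, which deserves the spectral-gap argument spelled out in Lemma~\ref{th:aux_lemma_Eh}(i),(ii), but this is a presentation matter rather than a gap.
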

\begin{proof}[Idea of proof]
  (a) The proof of case (a) is a modification of the proofs of
  \cite[Thm. 2.1 and Thm. 3.1]{ZhangDu:sinum2012}. Upon linearisation
  of the updates about the exact saddle $(x_*, v_*)$, the updates can
  be re-written as

  \begin{align}
    \label{eq:prfa:mainstep}
    \left( \begin{matrix} x_{k+1}-x_h \\ v_{k+1}-v_h \end{matrix} \right)
    &= \left[ I - \left( \begin{matrix} \alpha_k A & 0 \\ \beta_k B &
          \beta_k C \end{matrix} \right) \right]
    \left( \begin{matrix} x_k - x_h \\ 
        v_k - v_h \end{matrix} \right)  + O\b( (\alpha_k + \beta_k) (h^2 + r_k)
    r_k \b),
    %
  \end{align}
  where $r_k^2=\|x_k-x_h\|^2+\|v_k-v_h\|^2$,
  \begin{equation}
    \label{eq:prfa:defn_ABC}
    A = (I-2 v_* \otimes
  v_*) \D^2 E(x_*), \qquad C = (I-v_*\otimes v_*) \D^2 E(x_*) -
  \lambda_* I,
  \end{equation}
  and $B$ is a bounded linear operator (the precise form is not
  important).


  Clearly, $A, C$ are both symmetric and positive definite, hence the
  spectrum of ${\bf A} = ( \alpha A, 0; \beta B, \beta C)$ is strictly
  positive. If we chose $\alpha_k \equiv \alpha, \beta_k \equiv \beta$
  constant, then \eqref{eq:convergence_dimer1} follows from standard
  stability results for dynamical systems. The (straightforward)
  generalisation, together with complete proof of
  \eqref{eq:prfa:mainstep} are given in \S\ref{sec:prfa}

  (b) We first note that step (2) of Algorithm 2 is well-defined due
  to the fact that $\D^2 E(x)$ has index-1 structure for all $x \in
  B_r(x_*)$, if $r$ is chosen sufficiently small. In this case an
  implicit function argument guarantees the existence of a unique
  solution $v_k = V(x_k)$. This is made precise in Lemma
  \ref{th:aux_lemma_Eh}. 

  In the same lemma we also show that $\D^2 E_h(x) = \D^2 E(x)+
  O(h^2)$ for all $x \in B_r(x_*)$. This allows us to linearize step
  (3) in Algorithm 2 to obtain
  \begin{displaymath}
    x_{k+1} - x_h  = \b(I - \alpha_k A
    \b)(x_k - x_h) + O(
    r_k^2 + h^2 + \alpha_k^2 ) r_k,
  \end{displaymath}
  where, again, $A = (I-2 v_* \otimes v_*) \D^2 E(x_*)$ and $r_k =
  \|x_k-x_h\|$.  Since $A$ is positive definite the result follows
  easily. The complete proof is given in \S\ref{sec:prfb}.
\end{proof}

\section{A Dimer Algorithm with Linesearch}

\subsection{Motivation: a local merit function}
\label{sec:ls_motivation}
Let $x_* \in X$ be an index-1 saddle with minimal eigenpair $(v_*,
\lambda_*)$, and consider the modified energy functional
\begin{displaymath}
  F(x) := E(x) + \frac{\kappa}{2} \b( v_* \cdot (x - x_*) \b)^2.
\end{displaymath}
Then, $\D F(x_*) = 0$ and $\D^2 F(x_*) = (I+\kappa v_*\otimes v_*) \D^2
E(x_*)$, which is positive definite if and only if $\kappa > -
\lambda_*$. For this choice, it follows that $x_*$ is a strict local
minimizer of $F$.

The dimer variant of this observation is that, if $(x_h, v_h,
\lambda_h)$ is a dimer saddle point (cf. Theorem
\ref{th:dimer_saddle}) and we define a modified energy functional
\begin{displaymath}
  F_h(x) := \E_h(x, v_h) + \frac{\kappa}{2} \b( v_h \cdot (x - x_h) \b)^2,
\end{displaymath}
then choosing $\kappa > - \lambda_*$ and $h$ sufficiently small again
guarantees that $x_h$ becomes a local minimizer of $F_h$. We can
make this precise (and generalise) as follows.

\begin{lemma}
  \label{eq:positivity_Fh}
  Let $x_0 \in X$ such that $\D^2 E(x_0)$ has index-1 saddle structure
  with minimal eigenpair $(V(x_0), \lambda)$ and $\mu > 0$ such that $y
  \cdot (\D^2 E(x_0) y) \geq \mu \|y\|^2$ for $y \in
  \{V(x_0)\}^\perp$. Fix $r, h_0 > 0$.

  Let $0 < h \leq h_0, v_0 \in X, \|v_0 \|=1, g_0 \in X$ and
  \begin{displaymath}
    F_0(x) := \E_h(x, v_0) + g_0 \cdot (x-x_0) + \smfrac{\kappa}{2} \b( v_0 \cdot (x - x_0) \b)^2,
  \end{displaymath}
  then there exists $C = C(x_0, r, h_0)$ such that, for all $x \in
  B_r(x_0), h < h_0, y \in X$,
  \begin{displaymath}
    y \cdot (\D^2 F_0(x) y) \geq \B( \min\b( \mu, \kappa+\lambda \b) - C \b(h^2+\|v_0-V(x_0)\| + \|x - x_0 \|
    \b) \B) \| y \|^2.
  \end{displaymath}
\end{lemma}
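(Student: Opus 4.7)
The plan is to compute $\D^2 F_0(x)$ directly, reduce the spectral question to one at $x_0$ with $V(x_0)$ in place of $v_0$, and then use the index-1 structure of $\D^2 E(x_0)$ on the orthogonal decomposition with respect to $V(x_0)$. The error terms arise naturally from three perturbation steps, each controlled by one of the three small quantities appearing on the right-hand side.

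First I would differentiate $F_0$ twice. Since the linear term $g_0\cdot(x-x_0)$ and the constants drop out, one obtains
\begin{equation*}
  \D^2 F_0(x) \;=\; \D_x^2 \E_h(x,v_0) \;+\; \kappa\, v_0 \otimes v_0.
\end{equation*}
Applying \eqref{eq:err_Dx2Eh} and the fact that $\D^3 E$ is bounded on a neighbourhood of $x_0$ (of size $r + h_0\|v_0\| \le r+h_0$, using that $\|v_0\|=1$), Taylor expansion of $\D^2 E$ around $x_0$ yields
\begin{equation*}
  \D_x^2 \E_h(x,v_0) \;=\; \D^2 E(x_0) \;+\; R_1(x,h), \qquad \|R_1(x,h)\|_{L(X)} \le C\bigl(h^2 + \|x-x_0\|\bigr),
\end{equation*}
where $C$ depends only on $x_0$, $r$ and $h_0$.

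Next I would replace $v_0$ by $V(x_0)$ in the rank-one correction. Using $\|v_0\|=\|V(x_0)\|=1$, the elementary identity $v_0\otimes v_0 - V(x_0)\otimes V(x_0) = (v_0-V(x_0))\otimes v_0 + V(x_0)\otimes(v_0-V(x_0))$ gives
\begin{equation*}
  \bigl\| \kappa\, v_0\otimes v_0 - \kappa\, V(x_0)\otimes V(x_0) \bigr\|_{L(X)} \le 2|\kappa|\,\|v_0-V(x_0)\|.
\end{equation*}
Combining the two perturbation estimates, and writing $v_* := V(x_0)$ for brevity,
\begin{equation*}
  \D^2 F_0(x) \;=\; \D^2 E(x_0) + \kappa\, v_*\otimes v_* \;+\; R(x,h,v_0),
\end{equation*}
where $\|R\|_{L(X)} \le C(h^2 + \|v_0-V(x_0)\| + \|x-x_0\|)$ after adjusting $C$.

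The main step is then to show the clean spectral bound for the limiting operator $A_0 := \D^2 E(x_0) + \kappa\, v_*\otimes v_*$. Given $y\in X$ I would decompose $y = a v_* + y^\perp$ with $y^\perp \perp v_*$ and $a = v_*\cdot y$, so $\|y\|^2 = a^2 + \|y^\perp\|^2$. Using $\D^2 E(x_0) v_* = \lambda v_*$ together with $v_* \perp y^\perp$ (which makes the cross term vanish) and the hypothesis $y^\perp \cdot \D^2 E(x_0) y^\perp \ge \mu \|y^\perp\|^2$, one obtains
\begin{equation*}
  y\cdot A_0 y \;=\; (\lambda+\kappa)\, a^2 + y^\perp\cdot \D^2 E(x_0) y^\perp \;\ge\; \min(\mu,\kappa+\lambda)\,\|y\|^2.
\end{equation*}
Adding the perturbation $y\cdot R y$, bounded by $\|R\|_{L(X)}\|y\|^2$, yields the stated inequality. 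I do not expect a serious obstacle; the only mild care required is to ensure that the constants $C$ depend only on $x_0$, $r$, $h_0$ (and implicitly $\kappa$) and not on $x$, $v_0$, $g_0$, which is automatic since the perturbation bounds are obtained on the closed neighbourhood $\overline{B_r(x_0)} \times \{h\le h_0\}$ where $\D^3 E$ is uniformly bounded.
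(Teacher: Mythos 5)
Your proposal is correct and follows essentially the same route as the paper: expand $\D^2 F_0$ around $\D^2 E(x_0)+\kappa V(x_0)\otimes V(x_0)$ by perturbing in $x$ (via Lipschitz/Taylor), in $h$ (via the $O(h^2)$ error of the averaged Hessian), and in $v_0$ versus $V(x_0)$ (via a rank-one perturbation bound), then exploit the eigenstructure on the orthogonal splitting induced by $V(x_0)$. The only cosmetic difference is that you carry out the perturbation at a general $x$ in one pass and state the spectral step explicitly, whereas the paper computes at $x_0$ first, states the rank-one identity in quadratic-form rather than operator-norm guise, and invokes Lipschitz continuity of $\D^2 E$ to pass to general $x$.
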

\begin{proof}
  For $x = x_0$, we compute $\D^2 F_0(x_0) = \D^2 E(x_0) + O(h^2) +
  \kappa v_0 \otimes v_0$. Then, the result follows readily from the
  observation that
  \begin{align*}
    (v_0 \cdot y)^2
    &= (v \cdot y)^2 + ((v_0-v)\cdot y)
    ((v_0+v)\cdot y)  \\
    &\geq (v \cdot y)^2 - 2 \| v_0 - v \| \|y\|^2.
  \end{align*}

  For general $x$, the result follows from local Lipschitz continuity
  of $\D^2 E$.
\end{proof}

To complete the definition of $F_0$ we must specify $g_0, \kappa$. The
strategy is to choose it in such a way that minimising $F_0$ will lead
to an improved approximation for $x$. 

From the inverse function theorem it follows that there exists
$\tilde{x} = x_* + O(h^2) = x_h + O(h^2)$ such that $\D_x
\E_h(\tilde{x}, v_0) = 0$ (we will make this precise below), and Lemma
\ref{eq:positivity_Fh} allows us to assume that it is in fact a local
minimiser of $F_0$. When minimising $F_0$, we therefore hope to obtain
a point ``close to'' $\tilde{x}$. To test this, we evaluate the
residual at $x = \tilde{x}$,
\begin{align*}
  \D F_0(\tilde{x}) &= \D_x \E_h(\tilde{x}, v_0) + g_0 + \kappa (v_0
  \otimes v_0)  (\tilde{x}-x_0) \\
  &\approx g_0 + \kappa (v_0 \otimes v_0) \D_x^2 \E_h(x_0, v_0)^{-1}
  \b(\D_x \E_h(\tilde{x},v_0)-\D_x \E_h(x_0, v_0) \b) \\
  &\approx g_0 - \smfrac{\kappa}{\lambda_0} (v_0 \otimes v_0) 
  \D_x \E_h(x_0, v_0),
\end{align*}
where $\lambda_0 = H_h(x_0; v_0)\cdot v_0$. This leads to the choice
\begin{displaymath}
  g_0 := \frac{\kappa}{\lambda_0} (v_0 \otimes v_0) \D_x \E_h(x_0, v_0).
\end{displaymath}
Note in particular, that the steepest descent direction for $F_0$ at
$x_0$ is 
\begin{displaymath}
  -\D F_0(x_0) = \b(I + \smfrac{\kappa}{\lambda_0} (v_0 \otimes v_0) \b) \D_x \E_h(x_0, v_0).
\end{displaymath}
For the special choice $\kappa = - 2 \lambda_0$, this yields the
standard dimer search direction.

\subsection{Dimer algorithm with linesearch}
\label{sec:dimer_linesearch}
Given an iterate $x_k$, $v_k$ and $\lambda_k := v_k \cdot H_h(x_k;
v_k)$, we define the auxiliary functional $F_k \in C^4(X)$,
\begin{align}
  \label{eq:defn_aux_fcnl}
  F_k(x) &:= \E_h(x, v_k) - 2 \b[(v_k \otimes v_k) \D_x \E_h(x_k, v_k)\b] \cdot
  (x-x_k) -
  \lambda_k \b\| (v_k\otimes v_k) (x-x_k)\b\|^2 \\
  \notag
  &\,\,= \E_h(x, v_k) - 2 \b((v_k \cdot \D_x \E_h(x_k, v_k)\b) \cdot
  \b( v_k \cdot (x-x_k)\b) -
  \lambda_k \b(v_k \cdot (x-x_k)\b)^2,
\end{align}
motivated by the discussion in \S\ref{sec:ls_motivation}. Instead of
locally minimising $F_k$ we only perform a minimisation step in the
steepest descent direction, using a standard linesearch procedure
augmented with the following sanity check: For a trial $x^{\rm t} =
x_k - \alpha \D F_k(x_k)$ we require that $v_k$ is still a reasonable
dimer orientation for $x^{\rm t}$ by checking the residual
$\| (I-v_k\otimes v_k) H_h(x^{\rm t}; v_k) \|$. If this
residual falls above a certain tolerance then we reject the step and
reduce the step size.

\medskip

{\bf Algorithm 3: }
\begin{enumerate}
\item {\bf Input:} $x_0, v_{-1}, h$ \\
  {\bf Parameters: } $\beta_{-1}, \alpha_0, \alpha_{\rm max} > 0, \Theta \in
  (0, 1), \Psi > 1$

\medskip
\item {\bf For} $k = 0, 1, 2, \dots$ {\bf do} 
\medskip
\item[] \qquad {\it \%\% Rotation \%\%}
\item \qquad $[v_k, \beta_k] := {\bf Rotation}[x_k, v_{k-1}, \beta_{k-1}]$

\medskip

\item[] \qquad {\it \%\% Translation \%\%}
\item \qquad $p := - \D F_k(x_k)$
\item \qquad $\alpha := \min(\alpha_{\rm max}, 2 \alpha_{k-1})$
\item \qquad {\bf While } ($F_k(x_k+\alpha p) > F_k(x_k) - \Theta
  \alpha \|p\|^2$)  \\
  {\color{white} .} \hspace{1.2cm} {\bf or }  ($\| (I-v_k\otimes v_k) H_h(x_k+\alpha
  p; v_k) \| > 
\Psi \|\D_x \E_h(x_k, v_{k-1}) \|
$) { \bf do}
\item \qquad \qquad $\alpha := \alpha / 2$
\item \qquad $x_{k+1} := x_k + \alpha p$; $\alpha_k := \alpha$
\end{enumerate}

\medskip

It remains to specify step (3) of Algorithm 3. Any method computing an
update $v_k$ satisfying $\| (I-v_k\otimes v_k) H_h(x_k;v_k)\| \leq
{\rm TOL}$, for given {\rm TOL}, is suitable; we
prescribe the tolerance ${\rm TOL} = \|\D_x \E_h(x_k, v_{k-1}) \|$
so long as this isn't too large.
A basic choice of method is the
following projected steepest descent algorithm.

\medskip

{\bf Rotation: }
\begin{enumerate}
\item {\bf Input: } $x, v, \beta$ \\
  {\bf Parameters: } ${\rm TOL} = \min(\| \D_x \E_h(x, v) \|, {\rm
    TOL}_v^{\rm hi})$, $\beta_{\rm max}$, $\Theta$

\medskip
\item {\bf While } $\| (I-v \otimes v) H_h(x; v) \| >
  {\rm TOL}$ {\bf do } 
\item \qquad $s := -(I-v \otimes v) H_h(x; v)$
\item \qquad $r := \|s\|; \quad \beta := \min(\beta_{\rm max}, 2\beta)$
\item \qquad $v_\beta := \cos(\beta r) v + \sin(\beta r) \smfrac{s}{r}$
\item \qquad {\bf While } $\E_h(x, v_\beta) > \E_h(x,
  v) - \Theta \beta \|s\|^2$ {\bf do }
\item \qquad \qquad $\beta := \beta / 2$
\item \qquad $v := v_\beta$

\medskip
\item  {\bf Output: } $v, \beta$

\end{enumerate}

\medskip

%

\begin{proposition}
  Algorithm 3 is well-defined in that the rotation step (3) as well as
  the linesearch loop (6, 7) both terminate after a finite number of
  iterations, the latter provided that $\D_x \E_h(x_k, v_k) \neq
  0$. 
\end{proposition}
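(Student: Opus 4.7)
The plan is to verify the termination of three nested loops: the inner linesearch inside the Rotation subroutine (steps~5--7), the outer while-loop of Rotation (step~2), and the linesearch loop (steps~6--7) of Algorithm~3. Each termination follows from a Taylor expansion and a continuity argument; the second also requires a standard descent-to-stationary-point argument.

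First, for the inner linesearch inside Rotation, I would observe that $s=-(I-v\otimes v)H_h(x;v)$ is tangent to the sphere at $v$ and satisfies $H_h(x;v)\cdot s = -\|s\|^2$. Differentiating the geodesic $v_\beta = \cos(\beta\|s\|)\, v + \sin(\beta\|s\|)\, s/\|s\|$ in $\beta$ and using $\D_v\E_h(x,v) = h^2 H_h(x;v)$ from \eqref{eq:defn:Hh}, a Taylor expansion gives
\[
  \E_h(x, v_\beta) = \E_h(x, v) - \beta h^2 \|s\|^2 + O(\beta^2 \|s\|^2).
\]
The Armijo-type condition $\E_h(x,v_\beta) \leq \E_h(x,v) - \Theta\beta\|s\|^2$ then holds for all sufficiently small $\beta$, provided the constant $\Theta$ is interpreted relative to the effective step $h^2\beta$ (equivalently, $\Theta<h^2$); hence the inner loop exits after finitely many halvings.

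Second, for the outer while-loop of Rotation, I would argue as one does for projected steepest descent with backtracking on the compact manifold $S_1$. The sequence $(\E_h(x, v^{(j)}))_j$ is non-increasing and bounded below by $\min_{v\in S_1}\E_h(x,v)$, so the Armijo condition forces $\beta^{(j)}\|s^{(j)}\|^2 \to 0$. A uniform lower bound on the accepted step $\beta^{(j)}$ (obtained from the second-order Taylor remainder above, which is uniformly bounded in a neighbourhood of the compact sphere) then yields $\|s^{(j)}\| \to 0$. Since the tolerance $\mathrm{TOL}>0$ under the hypotheses, the residual $\|(I-v\otimes v)H_h(x;v)\|$ falls below $\mathrm{TOL}$ after finitely many iterations.

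Third, for the main linesearch of Algorithm~3, direct differentiation of \eqref{eq:defn_aux_fcnl} gives $\D F_k(x_k) = (I-2v_k\otimes v_k)\D_x\E_h(x_k,v_k)$, so $p=-\D F_k(x_k)$ satisfies $\|p\|=\|\D_x\E_h(x_k,v_k)\|\neq 0$ (the reflection $I-2v_k\otimes v_k$ is an isometry) and is a descent direction for $F_k$. A Taylor expansion yields
\[
  F_k(x_k + \alpha p) = F_k(x_k) - \alpha \|p\|^2 + O(\alpha^2),
\]
so the Armijo inequality holds for all small $\alpha$ because $\Theta<1$. For the sanity check, Rotation guarantees $\|(I-v_k\otimes v_k) H_h(x_k;v_k)\| \leq \|\D_x\E_h(x_k,v_{k-1})\|$; continuity of $x\mapsto H_h(x;v_k)$ together with the slack factor $\Psi>1$ then ensure $\|(I-v_k\otimes v_k) H_h(x_k+\alpha p; v_k)\| \leq \Psi\|\D_x\E_h(x_k,v_{k-1})\|$ for all sufficiently small $\alpha$. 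Both conditions are therefore satisfied after finitely many halvings, and the loop terminates.

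The one genuinely non-routine ingredient is the outer Rotation loop, where one needs the standard projected-gradient-descent convergence result on $S_1$ to conclude that the residual is driven below every positive tolerance. The other two terminations are essentially textbook Armijo arguments combined with the Hessian--gradient continuity that already underlies the earlier analysis.
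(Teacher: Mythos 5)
Your proof follows the same overall strategy as the paper (Armijo termination for the two inner backtracking loops, standard descent-to-stationary-point reasoning for the outer Rotation loop, and the reflection identity $\|p\|=\|\D_x\E_h(x_k,v_k)\|$ for the translation step), but you carry out the Taylor expansions explicitly where the paper simply cites \cite{MikhRedkoPere87}, \cite{AbsiMahoSepu08} and \cite{NocedalWright}. Your computations and the derivative formula $\D F_k(x_k) = (I - 2 v_k\otimes v_k)\D_x\E_h(x_k,v_k)$ are correct, and the argument for steps~(6)--(7) of Algorithm~3 matches the paper's.

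The genuinely interesting point is one you flag almost in passing: the directional derivative of $v\mapsto\E_h(x,v)$ along $s = -(I-v\otimes v)H_h(x;v)$ is $\D_v\E_h(x,v)\cdot s = -h^2\|s\|^2$, so the sufficient-decrease test $\E_h(x,v_\beta)\leq\E_h(x,v)-\Theta\beta\|s\|^2$ written in step~(6) of the Rotation routine has effective Armijo parameter $\Theta/h^2$, and will only ever be satisfied for small $\beta$ if $\Theta < h^2$. You are right about this, and it is a real mismatch: the paper's analysis in Lemma~\ref{th:prfls:rotation_lemma} explicitly treats Rotation as steepest descent for $G(v) = h^{-2}\E_h(x_k;v)$, whose Armijo test translates back into $\E_h(x,v_\beta)\leq\E_h(x,v)-\Theta\beta h^2\|s\|^2$, and the numerical parameters the paper recommends ($\Theta=\sqrt{0.1}$, $h=10^{-3}$) badly violate $\Theta<h^2$. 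So the $h^2$ factor appears to have been dropped from the algorithm listing, and the paper's one-line citation of the standard Armijo argument silently relies on the corrected scaling. Your version makes the hypothesis explicit rather than hiding it, which is the more honest statement; you might strengthen it by observing that restoring the $h^2$ (equivalently, measuring the sufficient decrease against $G$ rather than $\E_h$) removes the constraint on $\Theta$ and is what the rest of the paper's analysis in fact assumes.

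One minor looseness: in your Step~2 (outer Rotation loop) you assert a uniform lower bound on the accepted $\beta^{(j)}$ ``obtained from the second-order Taylor remainder.'' The usual argument is that if $\beta^{(j)}$ was halved at least once, then the rejected step $2\beta^{(j)}$ fails the Armijo test, which via the Lipschitz bound on $\D^2_v\E_h$ on the (compact, bounded) sphere gives $\beta^{(j)}\gtrsim (1-\Theta/h^2)/L$; combined with $\beta^{(j)}\|s^{(j)}\|^2\to 0$ this yields $\|s^{(j)}\|\to 0$. This needs the corrected scaling $\Theta<h^2$ to produce a positive constant, which again confirms that the caveat you identified is load-bearing.
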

\begin{proof}
  The Rotation Algorithm employed in step (3) of Algorithm 3
  terminates for any starting guess due to the fact that it is a
  steepest descent algorithm on a Stiefel manifold (the unit sphere)
  with a backtracking linesearch employing the Armijo condition
\cite{MikhRedkoPere87}.  Convergence of this iteration to a critical
  point is well known \cite[Chap.4]{AbsiMahoSepu08}. 
  %
  The loop (6,7) terminates after a finite number of iterations 
  \cite{NocedalWright} since
  $p$ is a descent direction for $F_k \in C^4$, that is, $F_k(x_k+\alpha p) =
  F_k(x_k) - \alpha \|p\|^2 + O(\alpha^2)$.
\end{proof}

\begin{remark}
  1. In practise, the algorithm terminates, once the entire dimer
  saddle residual reaches a prescribed tolerance, i.e., $\|\D_x
  \E_h(x_k, v_k)\| \leq {\rm TOL}^x$ in addition to $\|(I-v_k\otimes
  v_k) H_h(x_k; v_k) \| \leq \|\D_x \E_h(x_k, v_k)\|$.


  
  2. The two basic backtracking linesearch loops (5)--(8) and
  (11)--(12) can (and should) be replaced with more effective
  linesearch routines in practise, in particular choosing more
  effective starting guesses and using polynomial interpolation to
  compute linesearch steps. However, the discussion in
  \S\ref{sec:doublewell} indicates that a Wolfe-type termination
  criterion might be inappropriate.
\end{remark}

\subsection{Failure of global convergence}
\label{sec:doublewell}
The modifications of the original dimer algorithms that we have in
Algorithm 3 would, in the case of optimisation, yield a globally
convergent scheme. Unfortunately, this is not the case in the saddle
search case. To see this, consider a one-dimensional double-well example,
\begin{equation}
  \label{eq:doublewell}
  E(x) = \smfrac14 (1-x^2)^2 = \smfrac14 x^4 - \smfrac12 x^2 + \smfrac14;
\end{equation}
cf. Figure~\ref{fig:doublewell}(a).  There are only two possible
(equivalent) dimer orientation $v = \pm 1$, and therefore the rotation
steps in Algorithm 3 are ignored. We always take $v = 1$ without loss
of generality. The translation search direction at step $k$ is always
given by $p = -(1-2) \D_x \E_h(x_k, 1) = \D_x \E_h(x_k, 1)$, i.e., an
ascent direction.

\begin{figure}
  \includegraphics[height=5cm]{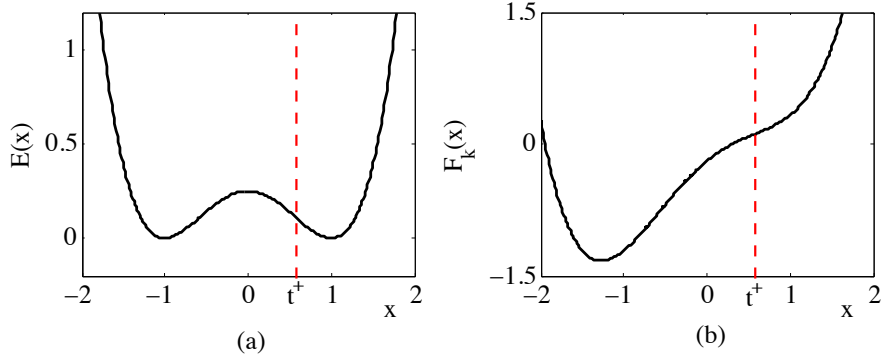}
  \caption{ \label{fig:doublewell} (a) Double-well energy defined in
    \eqref{eq:doublewell}. (b) The auxiliary functional $F_k(x)$ with
    $x_k = t^+_h$; cf. \S\ref{sec:doublewell}. The second turning
    point $t^-_h = - t^+_h$ is an admissible descent step for $F_k$,
    hence the dimer method can potentially cycle.}
\end{figure}

It is easy to see that $x_* = 0$ is an index-1 saddle (i.e., a
maximum), and that there are two turning points $t^\pm = \pm
3^{-1/2}$. Thus, there exist ``discrete turning points'' $t^\pm_h =
\pm 3^{-1/2} + O(h^2)$ such that $\lambda(t^\pm_h) = 0$, where
$\lambda(x) = H_h(x; 1) \cdot 1 = \smfrac{1}{2 h^{2}}
(E'(x+h)-E'(x-h))$.

Suppose that we have an iterate $x_k = t^+_h$, then the translation search
direction is $p^+ = \D_x \E_h(t^+_h, 1) < 0$. Since $\E_h(t^-_h) =
\E_h(t^+_h)$ it follows that 
\begin{displaymath}
  F_k(t^-_h) = \E_h(t^-_h) - 2 p^+ (t^-_h-t^+_h) < \E_h(t^-_h) = F_k(t^+_h).
\end{displaymath}
Thus, for $\Theta$ sufficiently small, the update $x_{k+1} = t^-_h$
satisfies all the conditions for termination of the loop (11)--(12) in
Algorithm 3. See also Figure~\ref{fig:doublewell} (b), where $F_k$ is
visualised.

We therefore conclude that our newly proposed variant of the dimer
algorithm does not excluded cycling behaviour. We also remark that the
example is not exclusively one-dimensional, but that analogous
constructions can be readily made in any dimension.

\subsection{Local convergence}
We now establish a local convergence rate.

\begin{theorem}
  \label{th:conv_ls}
  Let $(x_*, v_*, \lambda_*)$ be an index-1 saddle, let $(x_h,
    v_h, \lambda_h)$ denote the dimer saddle associated with $(x_*,
    v_*, \lambda_*)$ (cf. Theorem \ref{th:dimer_saddle}) and let $x_k,
    v_k$ be the iterates generated by the Linesearch Dimer Algorithm.
    Then there exist $r, h_0, C > 0$ and $\gamma \in (0, 1)$ such
    that, for $x_0 \in B_r(x_*)$, $v_{-1} \in B_r(v_*) \cap S_X$ and
    $h \leq h_0$, one of the following alternatives are true:
  \begin{itemize}
  \item[(i)] If $\D_x \E_h(x_k, v_{k-1}) = 0$ for some $k \in \N$,
    then $\| x_k - x_h \| \leq C h^2$.
  \item[(ii)] If $\D_x \E_h(x_k, v_{k-1}) \neq 0$ for all $k \in \N$,
    then 
    \begin{equation}
      \label{eq:conv_ls_maineq}
      \| x_k - x_h \| + \|v_k - v_h \| \leq C \gamma^k \b(\|x_0 - x_h\| 
      + h^2 \| v_{-1}-v_h\| \b),
    \end{equation}
  \end{itemize}
\end{theorem}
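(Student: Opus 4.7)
The plan is to linearise the rotation and translation steps around the dimer saddle $(x_h, v_h, \lambda_h)$, derive a coupled recursion for $(\|x_k - x_h\|, \|v_{k-1} - v_h\|)$, and check that the associated $2\times 2$ iteration matrix has spectral radius strictly less than $1$ for $r, h_0$ small enough. First I would choose $r, h_0$ so that Proposition~\ref{th:dimer_saddle} produces $(x_h, v_h, \lambda_h)$, Lemma~\ref{th:aux_lemma_Eh} supplies a Lipschitz map $V: B_r(x_*) \to S_1$ with $V(x_h) = v_h$, and Lemma~\ref{eq:positivity_Fh} (applied with $\kappa = -2\lambda_k$, for which $\kappa+\lambda_k = -\lambda_k > 0$) yields uniform strong convexity of $F_k$ on $B_r(x_k)$ whenever $\|v_k - V(x_k)\|$ is small. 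Alternative~(i) is then immediate: if $\D_x\E_h(x_k, v_{k-1}) = 0$ then the Rotation subroutine runs with tolerance $0$ and exits with $v_k = V(x_k)$; the identity $\D E_h(x) = \D_x\E_h(x, V(x))$ together with \eqref{eq:err_DxDvEh} gives $\|\D E_h(x_k)\| = O(h^2 \|V(x_k)-v_{k-1}\|) = O(h^2)$, and invertibility of $\D^2 E_h(x_h)$ delivers $\|x_k - x_h\| = O(h^2)$.

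Assume now $\D_x\E_h(x_k, v_{k-1}) \neq 0$ for all $k$. The exit criterion of the Rotation step, combined with the implicit-function estimates of Lemma~\ref{th:aux_lemma_Eh}, yields $\|v_k - V(x_k)\| \leq C\|\D_x\E_h(x_k, v_{k-1})\|$. Expanding $\D_x\E_h(x_k, v_{k-1})$ around $(x_h, v_h)$ and exploiting the $h^2$-smallness of $\D_x\D_v\E_h$ recorded in \eqref{eq:err_DxDvEh}, together with Lipschitz continuity of $V$ and $V(x_h)=v_h$, I obtain the rotation bound
\begin{equation*}
 \|v_k - v_h\| \leq C \b( \|x_k - x_h\| + h^2 \|v_{k-1} - v_h\| \b).
\end{equation*}

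For the translation step, $-\D F_k(x_k) = -(I - 2v_k\otimes v_k)\D_x\E_h(x_k, v_k) =: p$ is both the standard dimer direction and a descent direction for the strongly convex $F_k$. A first-order Taylor expansion of $p$ around $(x_h, v_h)$ gives $p = -M(x_k-x_h) + O(\|x_k-x_h\|^2 + h^2\|v_k-v_h\|)$, where $M = (I-2v_h\otimes v_h)\D^2_x\E_h(x_h, v_h)$ is the symmetric positive definite operator already exploited in the proof of Theorem~\ref{th:local_conv_dimer}(b). The main technical obstacle is to show that the Armijo and safeguard loop terminates with $\alpha_k$ in a range on which $\|I - \alpha_k M\|\leq\bar\eta<1$ uniformly. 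Termination of the loop follows from the $C^4$ regularity and strong convexity of $F_k$ (the Armijo condition) together with continuity in $\alpha$ of the safeguard residual (which at $\alpha=0$ equals $\|(I-v_k\otimes v_k) H_h(x_k; v_k)\| \leq \|\D_x\E_h(x_k, v_{k-1})\|$, strictly below the threshold $\Psi\|\D_x\E_h(x_k,v_{k-1})\|$ since $\Psi>1$). A standard backtracking analysis then gives $\alpha_k \in [\alpha_{\min}, \alpha_{\max}']$ depending only on the algorithm parameters and the spectral data of $M$, so that the linearised translation step reads
\begin{equation*}
 x_{k+1} - x_h = (I - \alpha_k M)(x_k - x_h) + O\b(\|x_k-x_h\|^2 + h^2\|v_k-v_h\|\b).
\end{equation*}

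Combining the rotation and translation bounds and absorbing the quadratic errors into the linear ones via $\|x_k-x_h\|\leq r$ yields a coupled recursion of the form
\begin{equation*}
 \begin{pmatrix} \|x_{k+1}-x_h\| \\ \|v_k-v_h\| \end{pmatrix}
 \leq
 \begin{pmatrix} \bar\eta + O(r+h^2) & O(h^2) \\ C & O(h^2) \end{pmatrix}
 \begin{pmatrix} \|x_k-x_h\| \\ \|v_{k-1}-v_h\| \end{pmatrix}.
\end{equation*}
For $r, h_0$ small enough the $2\times 2$ matrix has spectral radius bounded by some $\gamma\in(0,1)$; the $O(h^2)$ factor in the upper-right entry is precisely what produces the $h^2$ prefactor on $\|v_{-1}-v_h\|$ in \eqref{eq:conv_ls_maineq}, and a straightforward induction then completes the proof.
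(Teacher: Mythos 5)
Your overall strategy (linearize about $(x_h, v_h)$, derive a coupled $2\times 2$ recursion in $(\|x_k-x_h\|, \|v_{k-1}-v_h\|)$, check the spectral radius is $<1$, and conclude by induction) matches the paper's structure, and your rotation bound $\|v_k - v_h\| \lesssim \|x_k - x_h\| + h^2\|v_{k-1}-v_h\|$ is essentially the paper's Lemma~\ref{th:prfls:rotation_lemma}. The working recursion on $r_k^* + h^2 s_{k-1}$ in the paper is just the spectral-radius argument in a slightly different guise. However, there is a concrete gap in your treatment of the translation/linesearch step, and a smaller inaccuracy in Case~(i).

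\textbf{The linesearch gap.} You assert that a ``standard backtracking analysis'' gives $\alpha_k \in [\alpha_{\min}, \alpha'_{\max}]$ with $\|I - \alpha_k M\|_{\mathrm{op}} \leq \bar\eta < 1$ uniformly. This is not a consequence of the Armijo condition. For a strongly convex quadratic $F(x) = \smfrac12(x-y)^\t M(x-y)$, the Armijo condition constrains only $\alpha \leq 2(1-\Theta)\,(e^\t M^2 e)/(e^\t M^3 e)$ where $e = x-y$; this bound is direction-dependent and can be as large as $2(1-\Theta)/\lambda_{\min}(M)$, which far exceeds the threshold $2/\lambda_{\max}(M)$ needed for $\|I-\alpha M\|_{\mathrm{op}}<1$. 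The $\alpha_{\rm max}$ parameter is external to the theorem, so you cannot assume it is small. Consequently, the linearised operator in your $2\times 2$ recursion need not have norm below one, and the spectral-radius argument in the $\ell^2$ norm breaks down. The paper avoids this by working in the energy norm $\|u\|_*^2 = u\cdot\bigl((I-2v_*\otimes v_*)\D^2 E(x_*)u\bigr)$ and proving contraction of $x_{k+1}$ toward an \emph{intermediate} point $y_k$ --- the local minimiser of $F_k$ --- via the Armijo \emph{decrease} directly (Lemma~\ref{th:contrsd_eucl}, Step~4 ``long steps''), which is valid for arbitrary accepted step sizes. It then bounds $\|y_k - x_h\|_* \lesssim r_k^2 + h^2 r_k + h^4 s_{k-1}$ (Lemma~\ref{th:prfls:L2}) and closes the loop with the triangle inequality. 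Your argument would need to be restructured along those lines: contraction in the $*$-norm using the Armijo decrease, not an operator-norm bound on $I - \alpha_k M$.

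\textbf{Case (i).} You claim that if $\D_x\E_h(x_k, v_{k-1}) = 0$ the Rotation subroutine ``runs with tolerance $0$ and exits with $v_k = V(x_k)$''. With tolerance zero the iteration need not terminate in finitely many steps, so $v_k$ may simply not be produced; the paper explicitly flags Case~(i) as the situation in which the Rotation algorithm is ill-defined and instead argues, via Lemma~\ref{eq:min_Eh_x}, that the iterate $x_k$ that has already been generated lies within $O(h^2)$ of $x_h$ because $\D_x\E_h(\cdot, v_{k-1})$ has a unique zero in the relevant neighbourhood. Your conclusion is correct but the justification should go through this uniqueness statement rather than through termination of the rotation loop.

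One smaller imprecision: the upper-right entry of your $2\times 2$ matrix should be $O(h^4)$, not $O(h^2)$, once the rotation bound is substituted into the $O(h^2\|v_k - v_h\|)$ cross-term; this matters for obtaining the $h^2$ prefactor on $\|v_{-1}-v_h\|$ in~\eqref{eq:conv_ls_maineq}.
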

\begin{proof}[Sketch of proof]
   Case (i) merely serves to exclude an unlikely situation, in
    which the Rotation algorithm is ill-defined. We do not discuss
    this case here, but treat it in \S\ref{sec:prf_case_i}. In the
    following assume Case (ii).

	Let $r_k = \| x_k - x_h \|$ and $s_k := \|v_k - v_h \|$.

  0. We recall basic contraction results for Armijo-based linesearch 
  methods both in a general Hilbert space and for iterates constrained
  to lie on the unit sphere in \S\ref{sec:analysis_sd}.

  1. As a first proper step we establish that, under the termination
  criterion $\|(1-v_k\otimes v_k) H_h(x_k; v_k)\| \leq \| \D_x
  \E_h(x_k, v_{k-1}) \|$ for the rotation step, it follows that $\|v_k
  - v_h \| \lesssim r_k + h^2 s_{k-1}$.  This is proven in Lemma
  \ref{th:prfls:L1} and Lemma \ref{th:prfls:rotation_lemma}.

  2. Next, we use this result to establish that there exists a local
  minimizer $y_k$ of $F_k$ satisfying $\| y_k - x_h \| \lesssim r_k^2
  + h^2r_k + h^4 s_{k-1}$. This is established in Lemma \ref{th:prfls:L2}.

  3. The linesearch procedure and the upper bound on the step length
  ensure that the step of $x_k$ to $x_{k+1}$ contracts towards
  $y_k$, that is, $\| x_{k+1} - y_k \|_* \leq \gamma_* \| x_k - y_k
  \|_*$ for some $\gamma_* \in (0, 1)$ and $\|\cdot\|_*$ the {\em energy} norm
  induced by $(I-2v_*\otimes v_*) \D^2 E(x_*) \approx \D^2 F_k(y_k)$.
  This is obtained in Lemma \ref{th:prfls:contr_x}.

  4. The three preceding steps can then be combined to establish that, for
  $r_0, s_{-1}, h$ sufficiently small, there exists a constant
  $\gamma_3 \in (\gamma_*, 1)$ such that
  \begin{displaymath}
    r_{k+1}^* + h^2 s_k \leq \gamma_3 (r_k^* + h^2 s_{k-1}),
  \end{displaymath}
  where $r_k^*: = \|x_k - x_h\|_*$. This contraction result readily
  implies the result of the theorem.

  The complete proof is given in \S\ref{sec:prfls}.
\end{proof}

\section{Numerical Tests}

\subsection{Remarks on the implementation}
\label{sec:implementation}
Here, we remark on how preconditioning is implemented and on some
further details of our implementation that slightly deviate from the
theoretical formulations of Algorithms 1 and 3.

In all cases the underlying space is $X = \R^N$ for some $N \in
\N$. The main deviation from Algorithms 1 and 3 is that we admit
general Euclidean norms and inner products that may change from one
step to another,
\begin{displaymath}
  \| u \| = \sqrt{u^T M_k u}, \qquad \text{and} \qquad u \cdot v = u^T M_k v,
\end{displaymath}
where $M_k$ is symmetric and positive definite. That is, our
implementation is a {\em variable metric variant}. 

Let $E \in C^4(X) = C^4(\R^N)$, and let $\D'$ denote the standard
gradient and $\otimes'$ the standard tensor product (i.e., the
gradient and tensor products with respect to the $\ell^2$-norm), then
the gradient and tensor products in step $k$ become
\begin{displaymath}
  \D E(x) = M_k^{-1} \D' E(x), \quad \text{and} \quad (v \otimes v) \D
  E(x) = (v\otimes' v) \D' E(x).
\end{displaymath}
The variable metric variant of Algorithm 1, augmented with a
termination criterion, is given below. For the purposes of the
numerical testing we call this the {\em simple dimer} method, it is
effectively a forward Euler ODE integrator for the dimer
dynamics. (Note also that here the rotation step is performed by a
simple descent step followed by a projection, rather than a step on
the manifold.)

\medskip

{\bf Algorithm 1$^{\mbox{\protect\tiny vm}}$:}
\begin{enumerate}

\item {\bf Input: } $x_0, v_0 \in X$, $h>0$,$\alpha,\beta>0$,${\rm TOL}^x,{\rm TOL}^v > 0$; $k := 0$;
\item {\bf While } $ \| M_k^{-1/2} \D_x' \E_h(x_k, v_k)
  \|_{\ell^2} >  {\rm TOL}^x $ \\
  {\color{white} .} \hspace{2.1cm}  {\bf or } $ \| (M_k^{-1/2} - M_k^{1/2} v_k\otimes' v_k) h^{-2} \D_v'
  \E_h(x_k, v_k) \|_{\ell^2}  > {\rm TOL}^v$  \,\,\,{\bf do}
\item[] \qquad {\it \%\% Metric \%\%}
\item \qquad Compute a spd matrix $M_k \in \R^{N \times N}$;
\item \qquad $v_{k} := v_{k} / \| M_k^{1/2} v_{k} \|$;
\item\qquad $v_{k+1} := v_{k} - \beta (M_{k}^{-1}-v_k \otimes v_k) h^{-2} \D_v'
  \E_h(x_k, v_k)$
\item\qquad $x_{k+1} := x_k - \alpha (M_{k}^{-1}-2 v_{k} \otimes v_{k}) \D_x'
  \E_h(x_k, v_{k})$.
\item \qquad $k := k+1$
\end{enumerate}
\medskip
\begin{remark}
  In our experiments we observe that the rotation residual decreases
  more quickly than the translation residual, hence the convergence
  criteria could be based on the translation residual only, without
  affecting the results.
\end{remark}

\begin{remark}
  \label{rem:vm}
  Our analysis of both the Simple Dimer Algorithm and of the
  Linesearch Dimer Algorithm is readily extended to their variable
  metric variants, provided that the metric $M_k$ at iterate $k$ is a
  smooth function of the state, i.e., $M_k = {\bf M}(x_k, v_k)$, where
  ${\bf M} \in C^2(B_r(x_*) \times S_X; L(X))$, for some $r >
  0$. This is the case in all examples that we consider below. A more
  general convergence theory, e.g., employing quasi-Newton type
  hessian updates requires additional work.
\end{remark}
\medskip

Analogous modifications are made to Algorithm 3.  The auxiliary functional $F_k$ now reads
\begin{align*}
  F_k(x) &= \E_h(x; v_k) - 2 \b(v_k^T
  \D_x' \E_h(x_k, v_k)\b) \b( v_k^T M_k (x - x_k) \b)
  + \lambda_k \b( v_k^T M_k (x - x_k) \b)^2, \\
  \lambda_k &= h^{-2} v_k^T \D_v'\E_h(x_k, v_k), \\
  \D_x' \E_h(x, v) &= \smfrac{1}{2} \b( \D' E(x+hv) + \D' E(x - hv)
  \b), \\
  \D_v' \E_h(x, v) &= \smfrac{h}{2} \b( \D' E(x+hv) - \D' E(x-hv) \b),
\end{align*}
where we recall that $\D'$ denotes the standard gradient (i.e., the
gradient with respect to the $\ell^2$-norm).


\medskip \noindent 
{\bf Algorithm 3$^{\mbox{\protect\tiny vm}}$:}
\begin{enumerate}
\item {\bf Input: } $x_0, v_0 \in X$, $h>0$,${\rm TOL}^x,{\rm TOL}^v > 0$; $k := 0$;


\medskip
\item {\bf While } $ \| M_k^{-1/2} \D_x' \E_h(x_k, v_k)
  \|_{\ell^2} >  {\rm TOL}^x$
\medskip
\item[] \qquad {\it \%\% Metric \%\%}
\item \qquad Compute a spd matrix $M_k \in \R^{N \times N}$;
\item \qquad $v_{k}' := v_{k} / \| M_k^{1/2} v_{k-1} \|$;
\medskip
\item[] \qquad {\it \%\% Rotation \%\%}
\item \qquad $v_{k+1} := \text{\bf Rotation (VM)}[x_k, v_{k}', \beta, M_k]$

\medskip

\item[] \qquad {\it \%\% Translation \%\%}
\item \qquad $p_M := -(M_k^{-1}-2v_{k+1} \otimes v_{k+1})  \D_x' \E_h(x_k; v_{k+1})$
\item \qquad $\alpha := \min(\alpha_{\rm max}, 2 \alpha)$
\item \qquad {\bf While } ($F_k(x_k+\alpha p_M) > F_k(x_k) - \Theta
  \alpha p_M^T M_k p_M$)  \\
  {\color{white} .} \hspace{1.2cm} {\bf or }  ($\| M_k^{1/2} (M_k^{-1}-v_{k+1}\otimes v_{k+1})
  h^{-2} \D_v' \E_h(x_k+\alpha p_M; v_{k+1}) \|_{\ell^2} > $  \\  {\color{white} .} \hspace{2.0cm} $\Psi \| M_k^{1/2} (M_k^{-1}-v_{k+1}\otimes v_{k+1})
  h^{-2} \D_v' \E_h(x_k; v_{k+1}) \|_{\ell^2}$) { \bf do}
\item \qquad \qquad $\alpha := \alpha / 2$
\item \qquad $x_{k+1} := x_k + \alpha p_M$.
\item \qquad $k := k+1$
\end{enumerate}
\medskip

\medskip \noindent
{\bf Rotation$^{\mbox{\protect\tiny vm}}$:}
\begin{enumerate}
\item {\bf Input: } $x, v, \beta, M_k$ \\
  {\bf Parameters: } ${\rm TOL} = \max(\| M_k^{-1/2} \D_x' \E_h(x, v) \|_{\ell^2}, {\rm
    TOL}^v$), $\Theta \in (0, 1)$, $\beta_{\rm max}$;

\medskip
\item {\bf While } $\| M_k^{1/2} (M_k^{-1}-v \otimes v) h^{-2} \D_v' \E_h(x; v) \| >
  {\rm TOL}$ {\bf do } 
\item \qquad $s := -(M_k^{-1}-v \otimes v)  h^{-2} \D_v' \E_h(x; v)$
\item \qquad $t := \| M_k^{1/2} s \|_{\ell^2}$; $\beta :=
  \min(\beta_{\rm max}, 2\beta)$
\item \qquad $v_\beta := \cos( t \beta ) v + \sin(t\beta)
  t^{-1} s$
\item \qquad {\bf While } $\E_h(x, v_\beta) > \E_h(x,
  v) - \Theta \beta t^2$ {\bf do }
\item \qquad \qquad $\beta := \beta / 2$
\item \qquad $v := v_\beta$

\item  {\bf Output: } $v, \beta$ 
\end{enumerate}

\medskip
\begin{remark}
	An additional (optional) modification that can give significant
  performance gains is to employ a different heuristic for the initial
  guess of $\alpha$ in Step (7) of  Algorithm 3$^{\mbox{\protect\tiny vm}}$: With $p_{M,k}
  := -(M_k^{-1}-2v_k \otimes v_k) \D_x' \E_h(x_k; v_k)$ and $p_{I,k}
  := - (I - 2 v_k \otimes v_k) \D_x' \E_h(x_k, v_k)$ let, for $k \geq
  2$, $\gamma_k := (p_{M,k-1} \cdot' p_{I,k-1})/(p_{M,k}\cdot'
  p_{I,k})$, then for $k \geq 2$ we replace Step (7) with 
  \begin{displaymath}
    \alpha :=
    \min\b( \mathrm{avg}(\gamma_{\max(2,k-4)},
    \ldots, \gamma_k), 2 \alpha, \alpha_{\rm max})
  \end{displaymath}

  An analogous modification can be made for the rotation algorithm.
\end{remark}

In all numerical tests we use the following parameters: 
$h=10^{-3}$, $\Theta = \sqrt{0.1}$, ${\rm TOL}^x = 10^{-5}$, 
${\rm TOL}^v = 10^{-1}$,  $\alpha_{\rm max} = 1$ 
and $\Psi= 100$. We briefly discuss these choices:
\begin{itemize} 
  \item $h$ should be small enough such that the dimer saddle is
    sufficiently close to the true saddle (with respect to the length
    scales of the given problem), while large enough that numerical
    robustness does not become a problem for the rotation. In all our
    tests, $h = 10^{-3}$ was a good compromise.

  \item $\Theta$ should be sufficiently large (though, $\leq 1/2$) to
    ensure that the linesearch method finds steps which give a large
    decrease in dimer energy. It is often chosen much smaller than our
    choice of $\Theta = \sqrt{0.1}$ to immediately accept steps that
    make some progress. Our experience is that, with preconditioned
    search direction, our more stringent choice gives better
    performance.

  \item The choice of ${\rm TOL}^x$ simply controls the desired level
    of convergence to the dimer saddle. 
    
  \item The parameter ${\rm TOL}^v$ should be chosen as weakly as
possible such that either algorithm converges to the saddle. In Algorithm 3$^{\mbox{\protect\tiny vm}}$ rotations
are performed such that the rotation residual is at least as good as the
translation residual until it moves below this value. Subsequent
translations may increase the rotation residual such that further
applications of the rotation algorithm are needed. In practise this
means that the rotation algorithm is performed at every iteration of
Algorithm 3' for the first few steps, then only sporadically or not at
all once the rotation residual reaches ${\rm TOL}^v$. The use
of this parameter then decreases the overall number of gradient
evaluations needed to find the dimer saddle, by only performing the
rotation as necessary.

  \item The maximum step $\alpha_{\rm max}$ should principally be
    chosen such that the dimer cannot translate into non-physical
    regimes for the given problem. 
    
    \item The parameter $\Psi$ should be chosen $>1$ and restricts
the translation step from moving the dimer to a point where it becomes
too badly orientated. In our numerical tests this parameter is set
sufficiently large that this termination criteria for the translation
never occurs (the translation always terminates by finding a sufficient
decrease in the auxiliary functional $F_k$).  

\end{itemize}

\begin{remark}
	We observe during numerical testing that the rotation component
  of the linesearch dimer is somewhat vulnerable to rounding error in
  the objective function $E$. As the dimer becomes increasingly well
  orientated, $\nabla E$ becomes almost orthogonal to the dimer
  orientation and any small rotation may result in a zero change (to
  numerical precision) in the dimer energy.   
  In the numerical examples presented in this section, this never occurs 
  since we use a relatively high value for ${\rm TOL}^v$, that is the 
  rotation is only ever weakly converged. In our examples this is sufficient 
  for the the dimer to converge to the saddle.  If a stronger level of
  converge were required, another technique should be used to improve the
  rotation residual further,
  such as changing to a gradient based method or simply making fixed
  steps.
\end{remark}

\subsection{Test 1: A simple 2D example}
Our first example is taken from \cite{ZhangDu:sinum2012}. We equip $X
= \R^2$ with the standard Euclidean inner product. The energy function
is given by $E(x,y) = (x^2 - 1)^2 + y^2$, which has two simple
symmetric minima at $(\pm 1, 0)$ and a unique index-1 saddle at
$(0,0)$. The energy function is given graphically in Figure \ref{fig:Prob1v2Fig1}.

Figure~\ref{fig:Prob1v2Fig2} shows the
$x$-residual $\| \D_x \E_h(x, v) \|$ plotted against the number of
function evaluations and the number of iterations.

The performance of the linesearch dimer is compared with a simple dimer
method with different step sizes. Evidently a good choice of step is
important. If a poor choice is made the algorithm may perform poorly or
diverge. The linesearch dimer method requires a certain amount of
overhead versus a simple dimer with well chosen step sizes. We can see
in Figure \ref{fig:Prob1v2Fig2} that the
linesearch dimer may find a solution in fewer dimer iterations than the
best fixed step tested (indicating that it found better steps), but
using more gradient evaluations.

\begin{figure}
\includegraphics[width=0.6\textwidth]{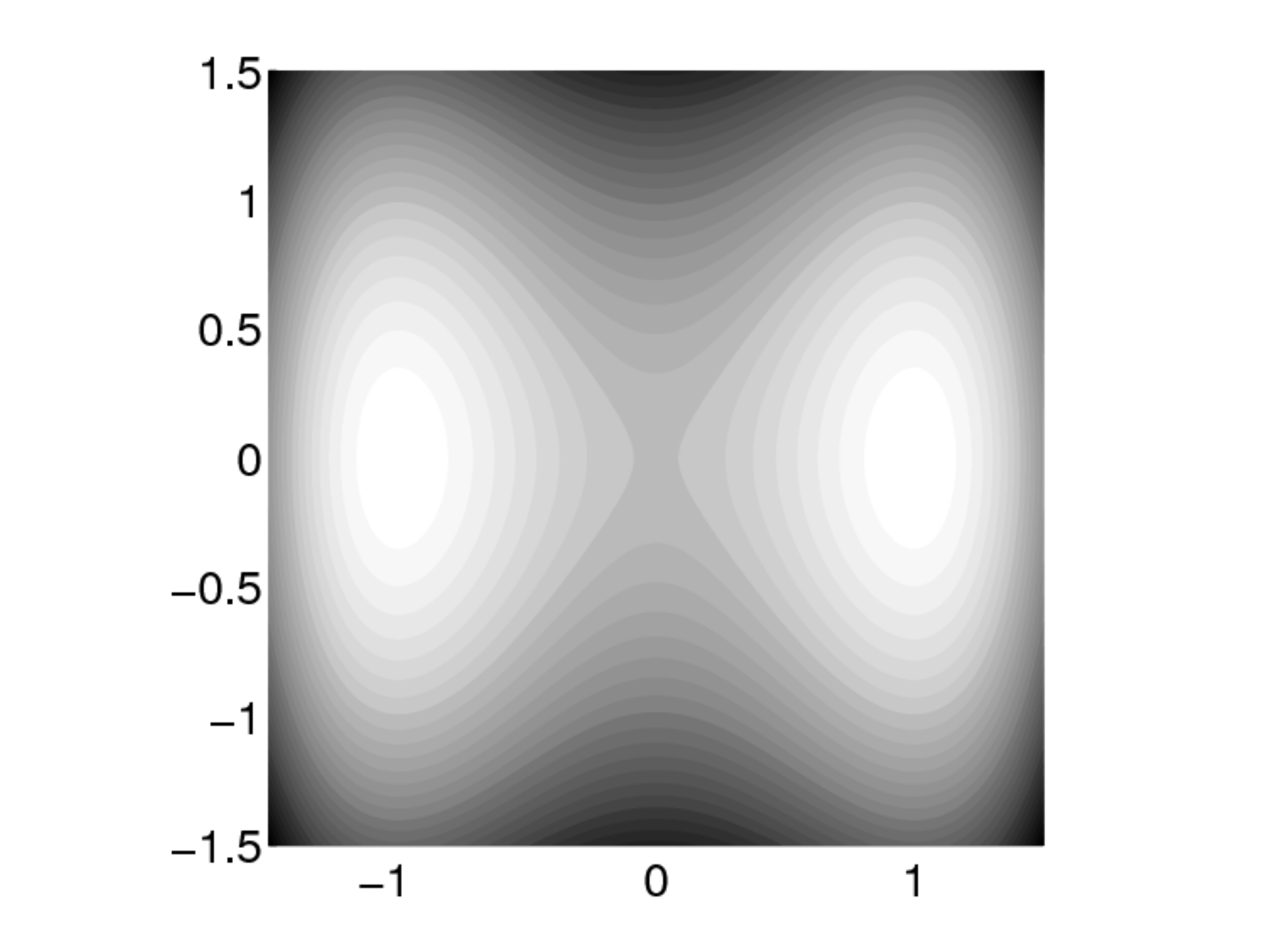}
\caption{ \label{fig:Prob1v2Fig1} Energy function for Test 1 with 2 symmetric minima and a unique index-1 saddle}
\end{figure}

\begin{figure}
\includegraphics[width=\textwidth]{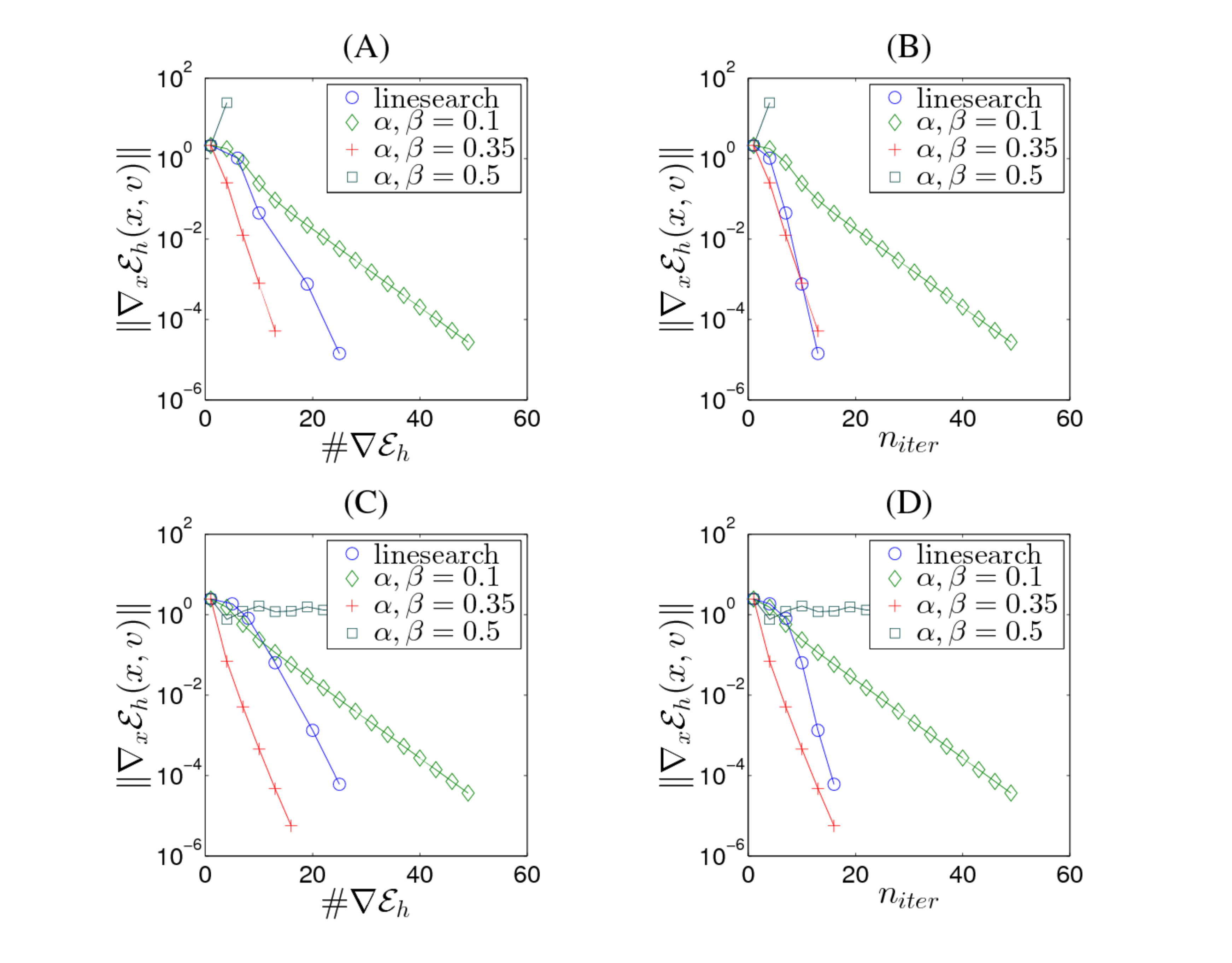}
\caption{ \label{fig:Prob1v2Fig2} Convergence of the dimer to the
    saddle in a simple 2D example (Test 1).(A,B) The $\ell_2$ norm of the $x$ gradient
    versus the number of force evaluations and the number of dimer iteration where the
   initial dimer state is $x=[0.2,1] ,v=[1,1] $. In this case the
    choice $\alpha,\beta = 0.5$ diverges immediately. (C,D) The $\ell_2$ norm of the $x$ gradient
    versus the number of force evaluations and the number of dimer iterations where the
    initial dimer state is $x=[0.2,1] ,v=[1,1] $.}
\end{figure}
%
%

\subsection{Test 2: Vacancy Diffusion}
Our second test case is a standard example from molecular
physics. A single atom is removed from a 2D lattice and a
neighbouring atom is moved partway into the gap. Atoms within a
certain radius of the vacancy are allowed to move, while those beyond
that radius are fixed. This configuration is illustrated in Figure
\ref{fig:Prob2Fig1}(A).

The energy function is given by the simple Morse potential,
\begin{equation}
E(\{x_i\}) = \sum_{i,j} V(\|x_i -x_j\|_2), \qquad V(r) = e^{-2a(r-1)}-2e^{-a(r-1)},
\end{equation}
with stiffness parameter $a=4$.
\begin{figure}
\centering
  \includegraphics[width=\textwidth]{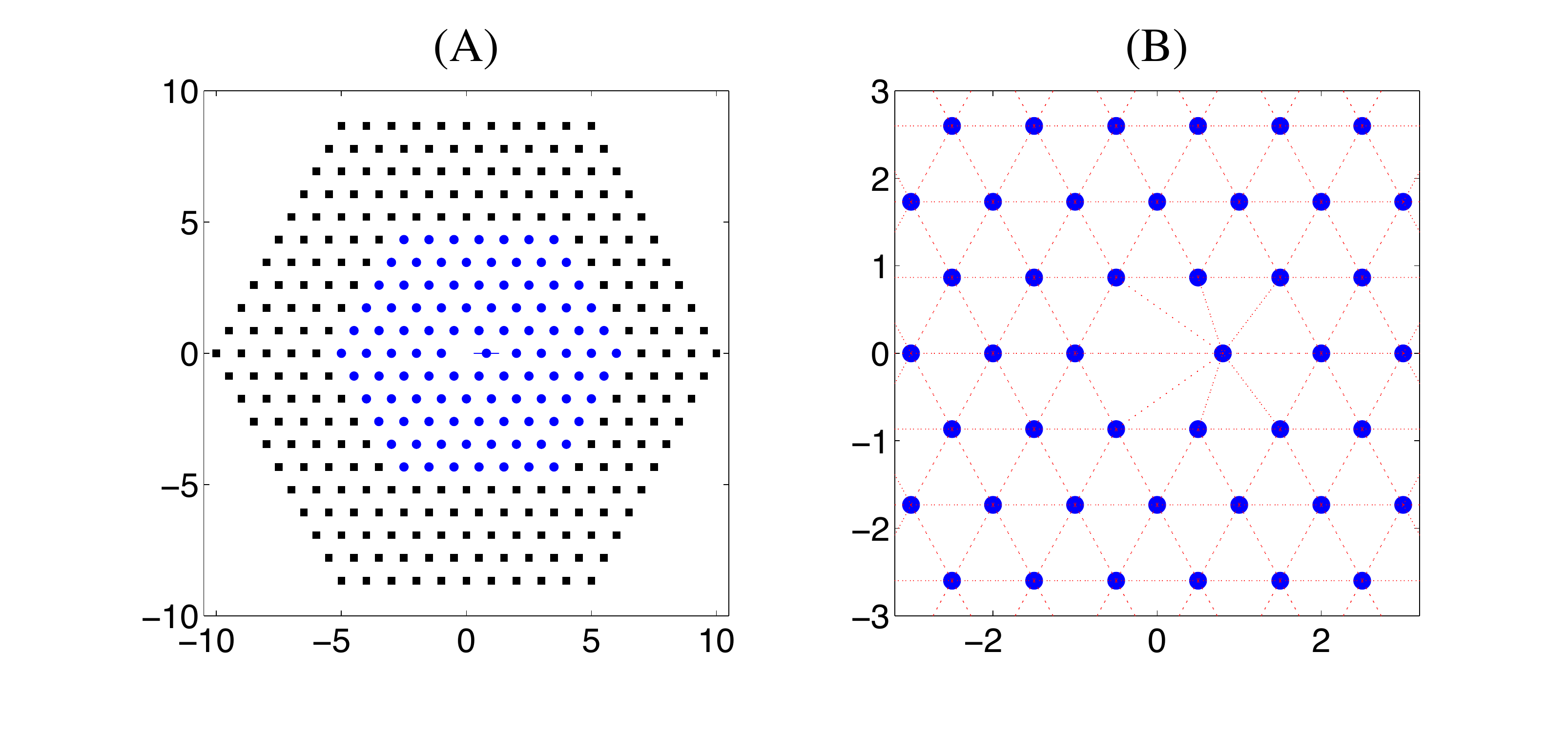}
  \caption{ \label{fig:Prob2Fig1} Initial configuration of the atoms in
    the vacancy diffusion problem (Test 2) . Black squares are fixed atoms while
    blue circles are atoms which move freely.\ (A) The initial dimer
    orientation is selected so that the translated atom has an
    orientation along the $y = 0$ direction, and is zero for all other
    atoms.\ (B) The Delaunay $\mathcal{T}_k$ triangulation used for the
    connectivity norm.}
\end{figure}

This test case demonstrates the importance of selecting the correct norm
for high-dimensional problems. The experiment is run both using the
generic $\ell_2$ norm (no preconditioner), as well as a `connectivity'
norm. Such a norm can be defined based on the Delaunay triangulation of
the atomistic positions (Figure \ref{fig:Prob2Fig1}(B))
\begin{displaymath}
  \langle M_k u, u \rangle = \int |\nabla I_{\mathcal{T}_k} u|^2,
\end{displaymath}
where $\mathcal{T}_k$ is the triangulation depicted in the figure and
$I_{\mathcal{T}_k}$ the associated nodal interpolant.
  
Figure \ref{fig:Prob2Fig2} demonstrates the convergence to the saddle
with different numbers of free atoms $nA$ (giving different
dimensionality of the system) in the two norms for the linesearch
dimer.
\begin{figure}
\centering
  \includegraphics[width=\textwidth]{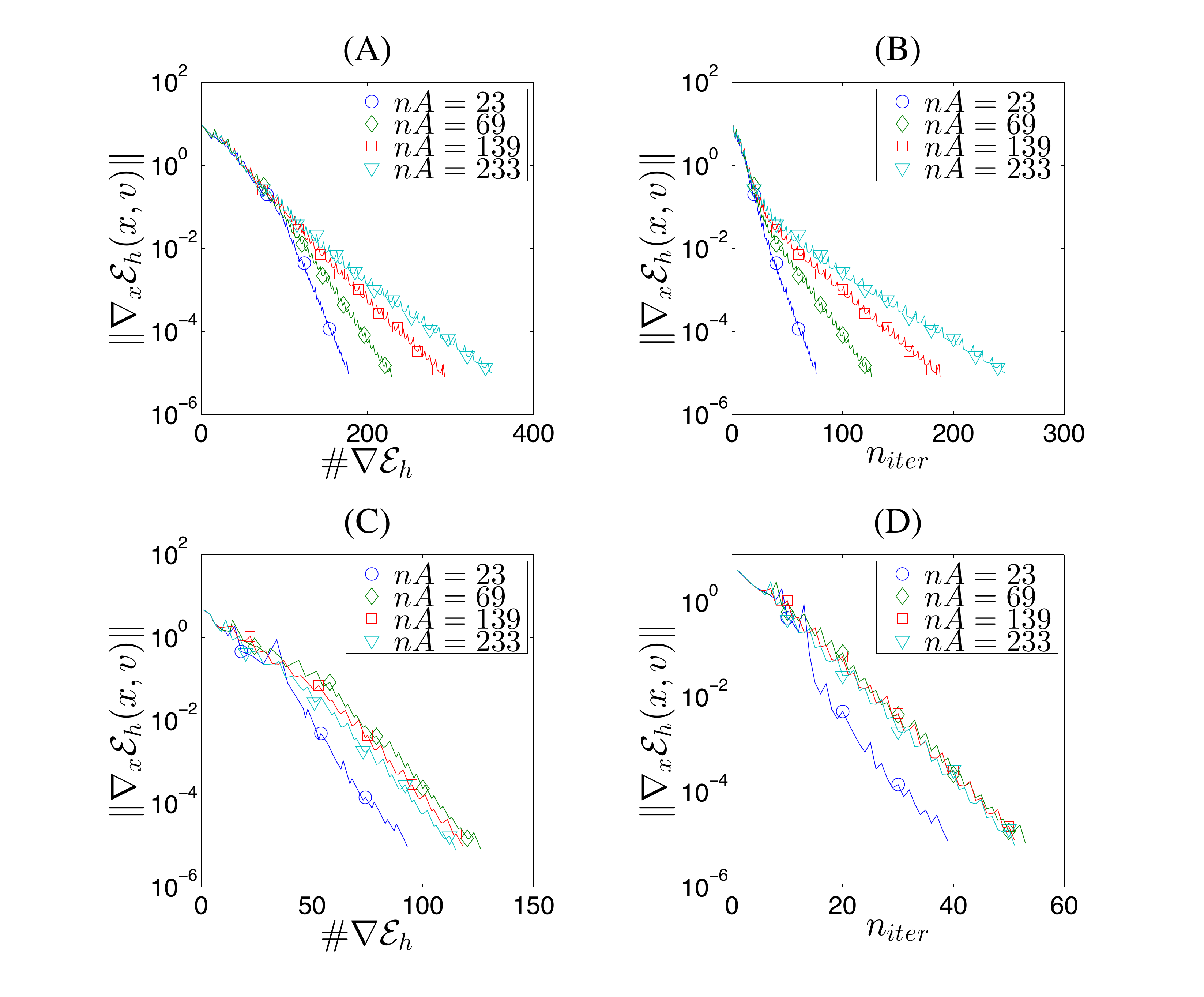}
  \caption{ \label{fig:Prob2Fig2} Convergence of the linesearch dimer to the
    saddle in the vacancy diffusion problem (Test 2) 
   with (A),(C) the $\ell_2$ norm
    and (B),(D) connectivity norm versus the number of force
    evaluations and dimer iterations for increasing numbers of free atoms.}
\end{figure}
We can also observe the benefit of the linesearch vs a simple
dimer scheme when using the connectivity norm (Figure
\ref{fig:Prob2Fig3}). The linesearch dimer selects very efficient
stepsizes with no a-priori information, while the simple dimer method
might exhibit either slow convergence, or no convergence, if the fixed
steps are poorly chosen.
\begin{figure}
\centering
  \includegraphics[width=\textwidth]{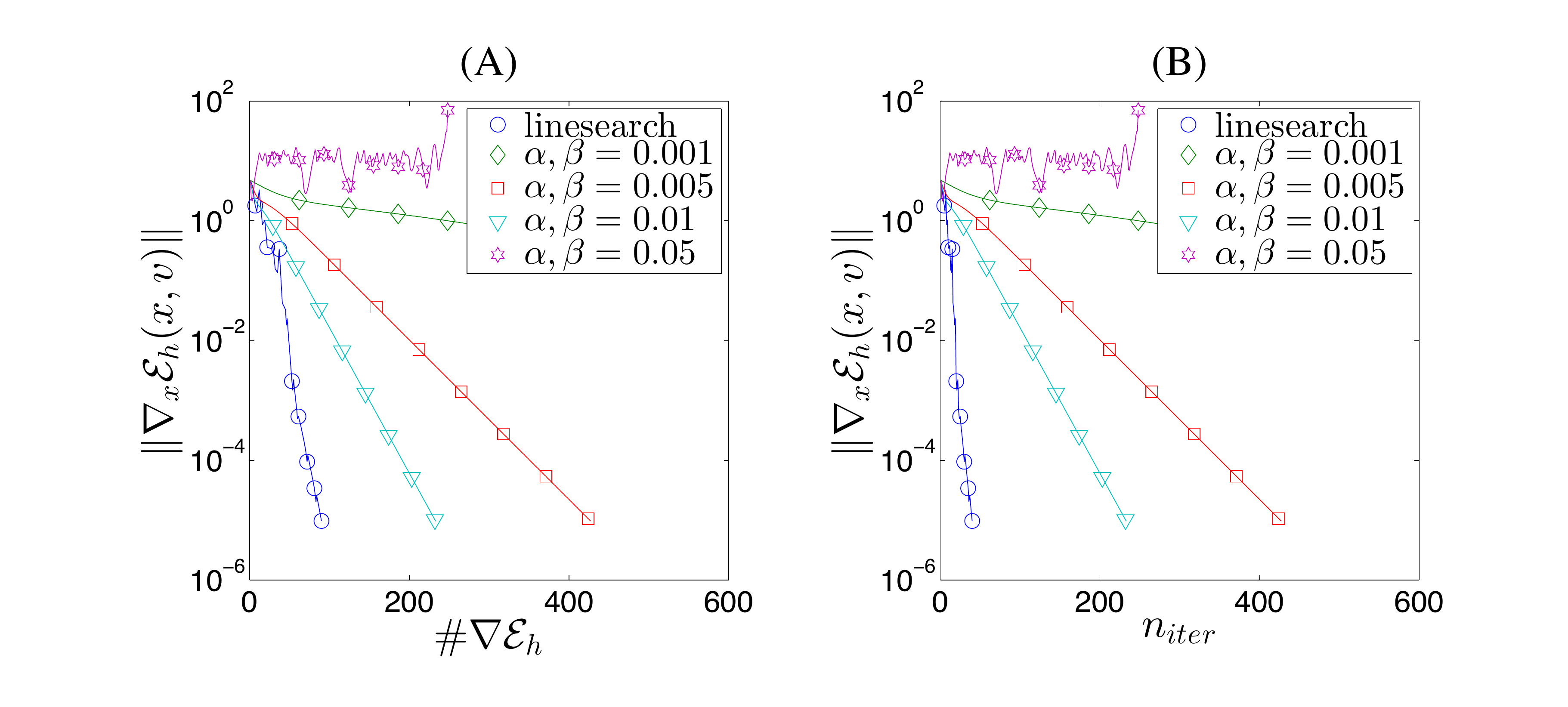}
  \caption{ \label{fig:Prob2Fig3} Convergence of the linesearch dimer vs
    the simple dimer method for Test 2 some choice of the of simple dimer step
    sizes with $nA=69$ using the connectivity norm.}
\end{figure}

\subsection{Test 3: A Phase Field Example}

Our final example is based on a simple phase field model where the
global energy is given by,
\begin{equation}
E(u) = \int_{\Omega} \frac{\epsilon}{2}\left|\nabla u \right|^2 + 
 \frac{1}{2\epsilon}(u^2-1)^2.
\end{equation}
In our test $\Omega$ is the unit square, and the boundary conditions are,
\begin{equation}
u(x) = \cases{
  -1, & x_1 \in \{0,1\} \\ 1, & x_2 \in
    \{0,1\}.
  }
\end{equation}

There are 2 minima of such an energy, these are given in Figure
\ref{fig:Prob3Fig1}(A),(B). 
The saddle between these two minima is given in Figure \ref{fig:Prob3Fig1}(C).

\begin{figure}
\centering
  \includegraphics[width=\textwidth]{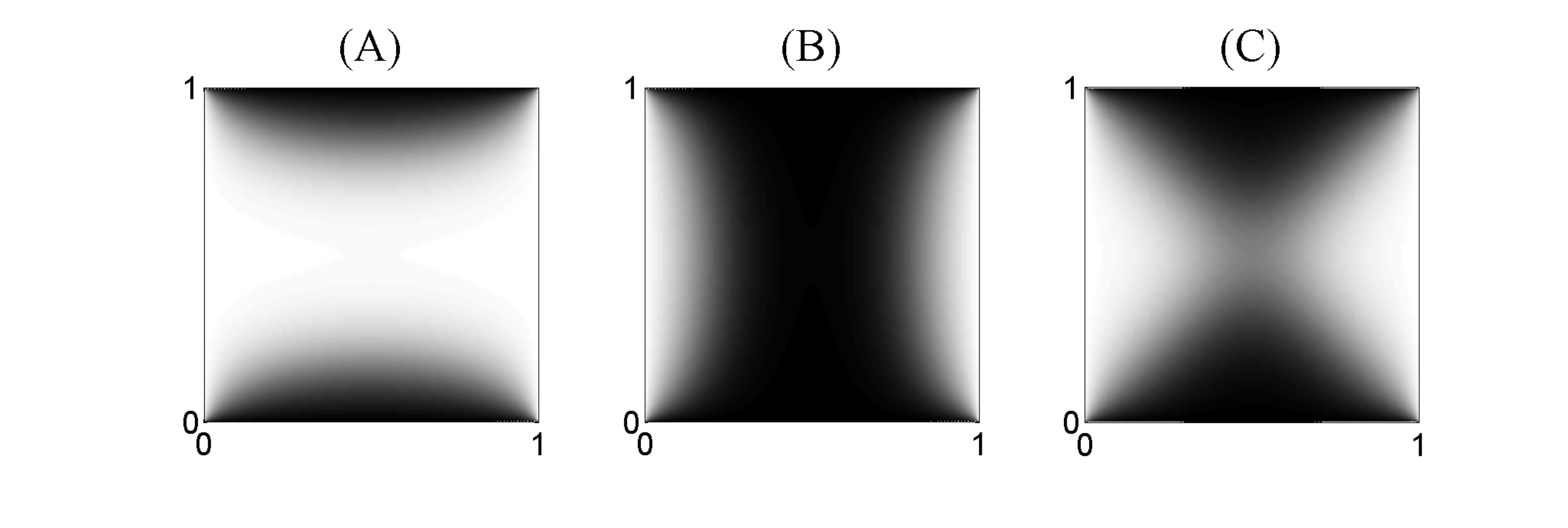}
  \caption{ \label{fig:Prob3Fig1}  Minima (A,B) and saddle point (C) 
  of the phase field problem (Test 3) with $\epsilon = 1/10$. The shading is linearly interpolated between {white(-1)} and {black(1)}. }
\end{figure}

A possible choice for a preconditioner for this system is a stabilized
Laplacian,
\begin{equation}
\label{stablap}
P = \epsilon \Delta + \frac{1}{\epsilon} I.
\end{equation}

In order to compute either a minimum or a saddle point for such a system we
triangulate the domain into a variable number of elements, thereby
creating a discrete system of variable dimensionality.
In our tests we take the initial dimer point as a small random
perturbation of one of the local minima, and the initial dimer
orientation is the metric inverted against a vector of ones.

In Figure \ref{fig:Prob3Fig2} we demonstrate the necessity of using a
preconditioner to solve this problem using the simple dimer
method. When using the preconditioner \eqref{stablap},
the algorithm performs well when the step size is chosen appropriately.
We observe the expected behaviour,
that there exists an optimal step size where convergence is fastest,
and beyond that step size the dimer diverges. In fact we observe that
the stabilized Laplacian metric is so effective, that the optimal step
size seems very close to the unit step. If the $\ell_2$ norm (identity
preconditioner) is used then for all step sizes tested the dimer
diverges, indicating that at best a very small step would need to be
chosen for convergence.

\begin{figure}
\centering
  \includegraphics[width=\textwidth]{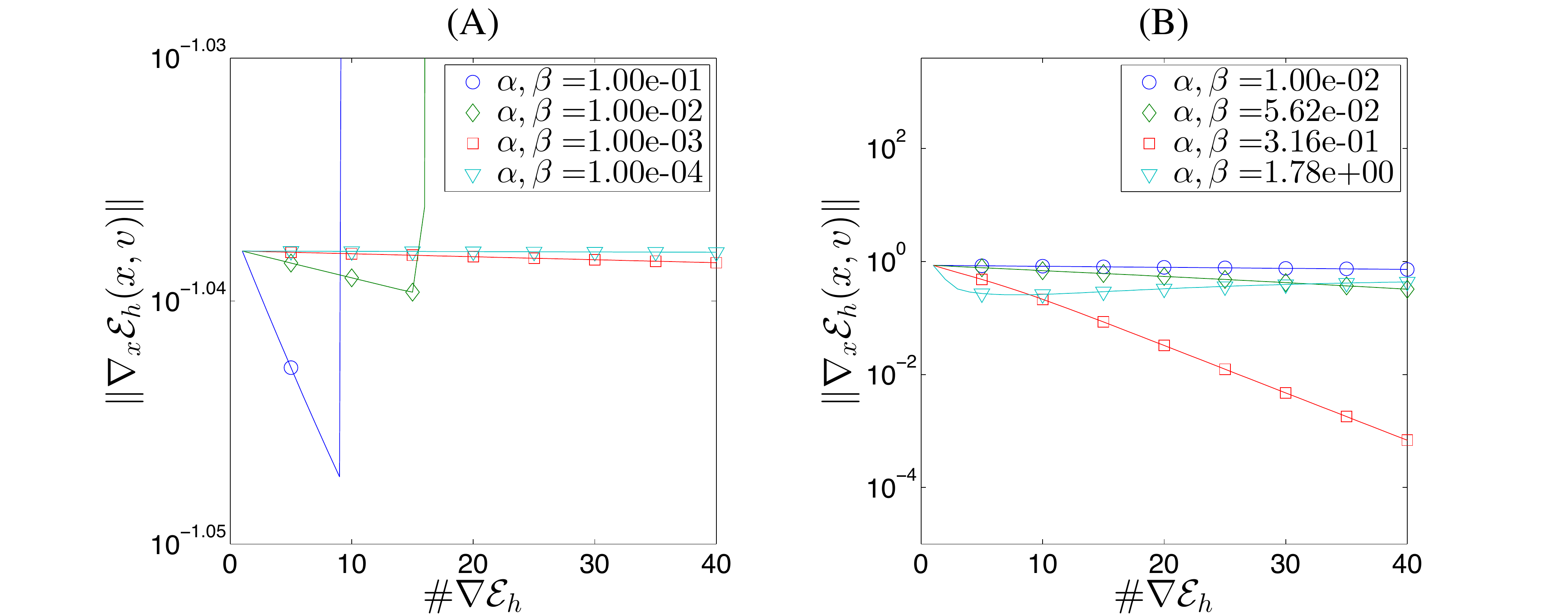}
  \caption{ \label{fig:Prob3Fig2} Convergence of the simple dimer to
    the saddle in the phase field problem (Test 3) with (A) the $\ell_2$ metric
    and (B) the stabilized Laplacian metric where $\epsilon=1/10$ for
    a triangulation with 3485 degrees of freedom. }
\end{figure}

In Figure \ref{fig:Prob3Fig3} we demonstrate that the used of the scaled
Laplacian metric for different system sizes. We observe that the use of
this metric gives almost perfect scale invariance.
\begin{figure}[h]
\centering
  \includegraphics[width=\textwidth]{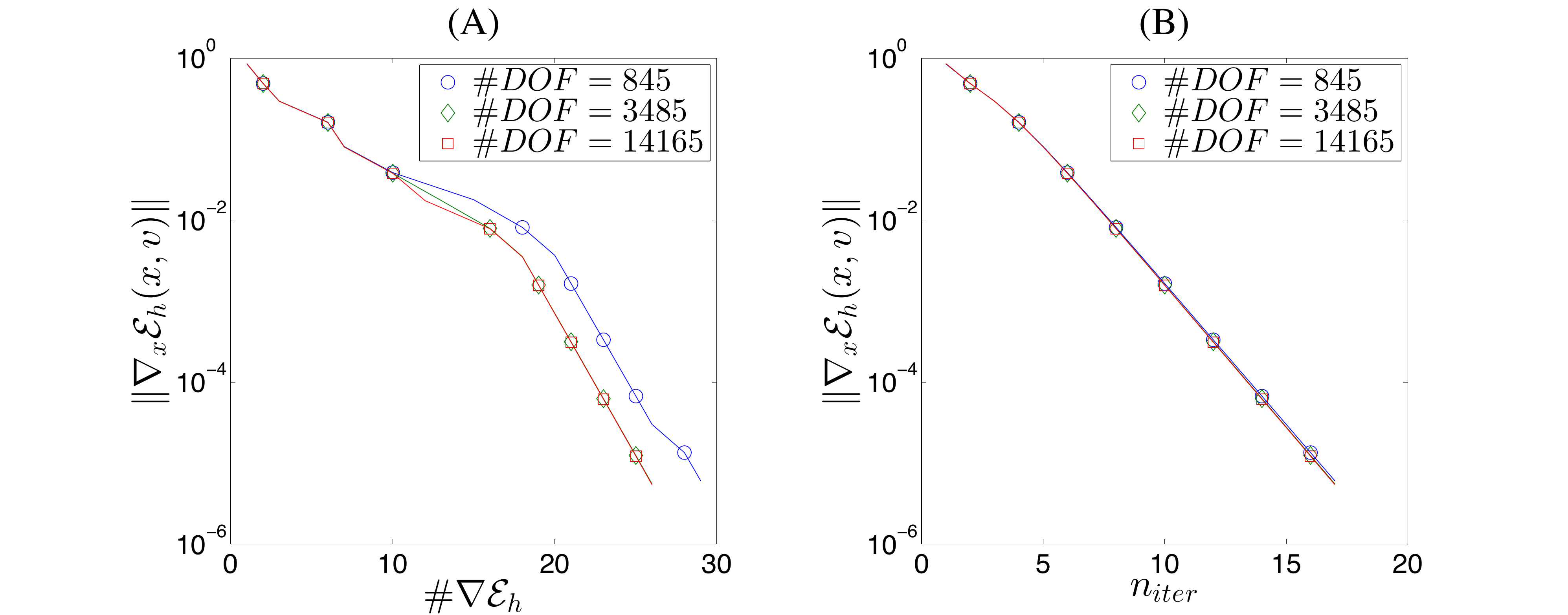}
  \caption{ \label{fig:Prob3Fig3} Convergence of the linesearch dimer
    to the saddle in the phase field problem (Test 3) with the stabilized
    Laplacian metric and triangulations of varying coarseness.}
\end{figure}

In Figure~\ref{fig:Prob3Fig4} we give the results of applying the simple
and linesearch dimers with varying $\epsilon$; the coarseness of the
discretization in each experiment is chosen such that $\Delta x \approx
\epsilon/5$. In some of these cases the linesearch dimer fails due to
rounding error. Specifically, due to rounding error in the naive
implementation of the energy function (simple summation over the
elements), the translation step fails to find a sufficient decrease in
the dimer energy, the step size selected shrinks to zero (to rounding
error) and the method stagnates. In order to correct this a more robust
method of evaluating the energy or a more advanced optimization
algorithm should be implemented which can either choose better
linesearch directions or more robustly deal with numerically zero energy
changes.

We also observe, in the case $\epsilon = 1/30$ that the rate of
convergence of even the simple dimer changes once the residual moves
below a certain value. We are unable to give a satisfactory
explanation for this effect, but speculate that the singularity in the
boundary condition (which excludes admissible $H^1$-states) might be
the case. (In particular, we observed that this behaviour is
independent of the mesh coarseness and of the dimer length.)
\begin{figure}
\centering
  \includegraphics[width=\textwidth]{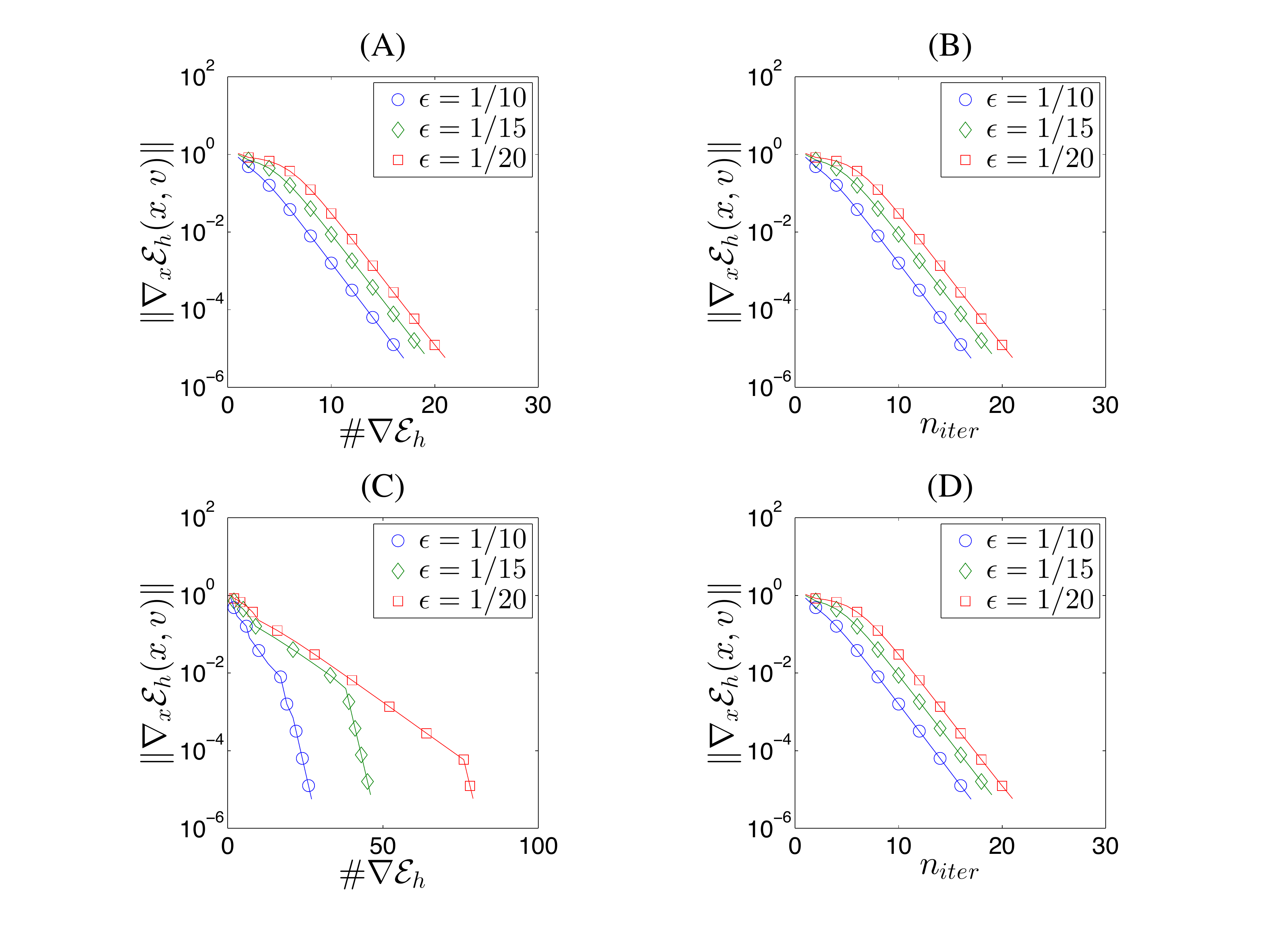}
  \caption{ \label{fig:Prob3Fig4}  Convergence  to the
    saddle in the phase field problem (Test 3) using the stabilized Laplacian 
    metric with (A),(B) the simple dimer with unit step length and (C),(D) the 
    linesearch dimer for a triangulation with 2405,9805,22205 degrees of 
    freedom for the respective choices of $\epsilon$.}
\end{figure}

\section{Conclusions}

We have described a dimer method for finding a saddle point in which
the dimer length $h$ is not required to shrink to zero, but which
converges to a point that lies within $O(h^2)$ of a saddle. We have
enhanced this algorithm with a lineasearch to improve its robustness,
and use the observation that the dimer method may be formulated and
applied in a general Hilbert space to allow preconditioning that
improves the method's efficiency. The linesearch uses a local merit
function.  Unfortunately our particular merit function may not lead to
global convergence of the iterates, and it is an open question as to
whether there is another merit function that ensures global
convergence. We have illustrated the positive effects of our
algorithms on three realistic examples.

\appendix

\section{Proofs}

\subsection{Proof of Proposition \ref{th:dimer_saddle}}
\label{sec:proof_dimer_saddle}
%
We prove the result using the inverse function theorem. We write
\eqref{eq:dimer_saddle_system} as $F(x_h, v_h, \lambda_h) = 0$ and
show that $\|F(x_*, v_*, \lambda_*) \| \leq C h^2$ and that $\D F
(x_*, v_*, \lambda_*)$ is an isomorphism with bounds independent of
$h$. The inverse function theorem then yields the stated result.

{\it Residual estimate. } Let the residual components be 
  \begin{align*}
    r_x &:= F_x(x_*, v_*, \lambda_*) = \smfrac12 \b(\nabla E(x_* + h
    v_*) + \nabla
    E(x_* - h v_*)\b), \\
    r_v &:= F_v(x_*, v_*, \lambda_*) = \smfrac{1}{2h} (\nabla E(x_*+h
    v_*) - \nabla E(x_* - h v_*) )
    - \lambda_* v_*, \\
    r_\lambda &:= F_\lambda(x_*, v_*, \lambda_*) = \smfrac12 (\| v_* \|^2 - 1).
\end{align*}
Then,
\begin{align*}
  r_x &= \nabla E(x_*) + \smfrac12 \nabla^2 E(x_*) (h v_* - h v_*) +
  O(h^2) =
  O(h^2), \\
  r_v &= \nabla^2 E(x_*) v_* - \lambda_* v_* + \smfrac1h \b( \nabla^3
  E(x_*) [h v_* \otimes h v_* - h v_*
  \otimes h v_*] + O(h^2) = O(h^2), \\
  r_\lambda &= 0.
\end{align*}
Thus, $\| F(x_*, v_*, \lambda_*) \| \leq C h^2$.

{\it Stability. } $\D F(x_*, v_*, \lambda_*)$ can be written in the
form
\begin{align*}
  \D F(x_*, v_*, \lambda_*)&= 
  \left[ \begin{matrix} 
      \frac{\D^2 E(x_*+hv_*) + \D^2 E(x_* -hv_*)}{2} & 
      h \frac{\D^2 E(x_* +hv_*)- \D^2
      E(x_* -h v_*)}{2} & 0 \\
       \frac{\D^2 E(x_* +hv_*)- \D^2 E(x_* -h v_*)}{2h} &  
       \frac{\D^2 E(x_* +h v_*) + \D^2 E(x_* -h v_*)}{2}  -
     \lambda_* I & v_* \\
      0 & v_*^T & 0
    \end{matrix} \right] \\
  &=   \left[ \begin{matrix}
      \D^2 E(x_*) & 0 & 0 \\
      \D^3 E(x_*) \cdot v_* &  \D^2 E(x_*) - \lambda_* I & v_* \\
      0 & v_*^T & 0
    \end{matrix} \right] + O(h^2) =: {\bf A} + O(h^2).
\end{align*}
where we used \eqref{eq:err_Dx2Eh}, \eqref{eq:defn:errorH} and
\eqref{eq:err_DxDvEh}.
By assumption, $\D^2 E(x_*)$ is an isomorphism on $X$. Since, also by
assumption, $\lambda_*$ is a simple eigenvalue, the block
\begin{equation}
  \label{eq:prf_dimersaddle:eigenblock}
  \left[ \begin{matrix}
      \D^2 E(x_*) -
      \lambda_* I & v_* \\
      v_*^T & 0
    \end{matrix} \right]  
\end{equation}
is an isomorphism on $X \times \R$ as well. Thus, ${\bf A}$ is an
isomorphism on $X \times X \times \R$ and consequently, for all $h$
sufficiently small, $\D F(x_*, v_*, \lambda_*) = A + O(h^2)$ is also
an isomorphism, with a uniform bound on its inverse.

Thus, the inverse function theorem shows that there exist a radius
$r_0 > 0$ and a dimer length $h_0 > 0$, such that, for $h \leq h_0$,
there exists a unique solution $(x_h, v_h, \lambda_h)$ to
\eqref{eq:dimer_saddle_system} in a ball of radius $r_0$ about $(x_*,
v_*, \lambda_*)$, satisfying the estimate
\eqref{eq:dimer_saddle_error}.


\subsection{Proof of Theorem \ref{th:local_conv_dimer} (a)}
\label{sec:prfa}
Fix $r$ and $h_0$ sufficiently small so that Theorem \ref{th:dimer_saddle}
applies. Let $e_k := x_k-x_h, f_k := v_k-v_h$ and $r_k :=
\sqrt{\|e_k\|^2+\|f_k\|^2}$, so that trivially $\|e_k\| \leq r_k$ and 
$\|f_k\| \leq r_k$.

\begin{lemma}
  \label{th:prfa:lemma1}
  Let $p := -(I- 2 v_k\otimes v_k) \D_x\E(x_k, v_k)$ and $s :=
  -(I-v_k\otimes v_k) H_h(x_k; v_k)$, then, under the assumptions of
  Theorem \ref{th:local_conv_dimer},
  \begin{align}
    \label{eq:prfa:exp_p}
    p &= - A e_k + O(r_k^2 + h^2 r_k), \qquad \text{and}  \\
    \label{eq:prfa:exp_s}
    s &= - B e_k - C f_k + O(r_k^2 + h^2 r_k),
  \end{align}
  where the operators $A$ and $C$ are defined in \eqref{eq:prfa:defn_ABC}
 and $B$ is a bounded linear operator.
\end{lemma}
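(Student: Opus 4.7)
\textbf{Plan of proof for Lemma \ref{th:prfa:lemma1}.} The strategy is to Taylor-expand $\D_x \E_h(x_k, v_k)$ and $H_h(x_k; v_k) = h^{-2} \D_v \E_h(x_k, v_k)$ about the dimer saddle $(x_h, v_h)$, exploit the dimer-saddle relations \eqref{eq:dimer_saddle_system} to eliminate or simplify the leading constant terms, read off the first-order operators using \eqref{eq:err_Dx2Eh}--\eqref{eq:err_DxDvEh} and \eqref{eq:defn:dtwoh}, and finally replace $(x_h, v_h, \lambda_h)$ by $(x_*, v_*, \lambda_*)$ incurring only additional $O(h^2)$ errors courtesy of Proposition~\ref{th:dimer_saddle}. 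The remainders are controlled by uniform bounds on the fourth derivative of $E$ on a neighbourhood of $x_*$.

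\textbf{Derivation of \eqref{eq:prfa:exp_p}.} First I Taylor-expand
\begin{equation*}
  \D_x \E_h(x_k,v_k) = \D_x \E_h(x_h,v_h) + \D_x^2 \E_h(x_h,v_h)\, e_k + \D_x \D_v \E_h(x_h,v_h)\, f_k + O(r_k^2).
\end{equation*}
The first term vanishes by the first equation in \eqref{eq:dimer_saddle_system}; the third is $O(h^2 \|f_k\|)$ by \eqref{eq:err_DxDvEh}; and by \eqref{eq:err_Dx2Eh} combined with $x_h = x_* + O(h^2)$, the Hessian block satisfies $\D_x^2 \E_h(x_h, v_h) = \D^2 E(x_*) + O(h^2)$. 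Therefore $\D_x \E_h(x_k,v_k) = \D^2 E(x_*) e_k + O(r_k^2 + h^2 r_k)$. Since $v_k = v_* + O(r_k + h^2)$, the projector satisfies $I - 2 v_k\otimes v_k = I - 2 v_*\otimes v_* + O(r_k + h^2)$, and multiplying both factors yields \eqref{eq:prfa:exp_p}.

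\textbf{Derivation of \eqref{eq:prfa:exp_s}.} Expanding similarly,
\begin{equation*}
  H_h(x_k;v_k) = H_h(x_h;v_h) + h^{-2}\D_x\D_v \E_h(x_h,v_h)\,e_k + h^{-2}\D_v^2 \E_h(x_h,v_h)\,f_k + O(r_k^2).
\end{equation*}
By the second equation in \eqref{eq:dimer_saddle_system}, $H_h(x_h;v_h) = \lambda_h v_h$, which is $O(1)$ rather than $O(h^2)$; this is the delicate term. However, applying the projector $(I - v_k\otimes v_k)$ exploits the constraint $\|v_k\| = \|v_h\| = 1$: these force $v_k \cdot v_h = 1 - \tfrac12 \|f_k\|^2$, so
\begin{equation*}
  (I - v_k \otimes v_k)\, v_h = v_h - (v_k\cdot v_h) v_k = -f_k + O(\|f_k\|^2).
\end{equation*}
The remaining operator coefficients $h^{-2}\D_x\D_v \E_h(x_h,v_h)$ and $h^{-2}\D_v^2 \E_h(x_h,v_h)$ equal $\D^3 E(x_*) \cdot v_* + O(h^2)$ and $\D^2 E(x_*) + O(h^2)$ respectively, by \eqref{eq:err_DxDvEh} and \eqref{eq:defn:dtwoh}. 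Replacing $v_k\otimes v_k$ by $v_*\otimes v_*$ in the remaining projections contributes $O((r_k+h^2)r_k)$, and using $\lambda_h = \lambda_* + O(h^2)$ absorbs the discrepancy between $\lambda_h f_k$ and $\lambda_* f_k$. Assembling these contributions gives
\begin{equation*}
  s = -(I-v_*\otimes v_*)\bigl(\D^3 E(x_*)\cdot v_*\bigr) e_k - \bigl[(I-v_*\otimes v_*)\D^2 E(x_*) - \lambda_* I\bigr] f_k + O(r_k^2 + h^2 r_k),
\end{equation*}
which is \eqref{eq:prfa:exp_s} with $B := (I - v_*\otimes v_*) \D^3 E(x_*)\cdot v_*$.

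\textbf{Main obstacle.} The expansion of $p$ is essentially routine because the leading term vanishes exactly at the dimer saddle. The non-trivial point is the expansion of $s$: the residual $H_h(x_h;v_h) = \lambda_h v_h$ is of order one and would destroy the desired bound, were it not annihilated (up to the linear contribution $-\lambda_* f_k$) by the projector $(I - v_k\otimes v_k)$. This cancellation requires that one does not linearise the projector prematurely; one must first exploit the unit-norm constraint to extract the $-f_k$ contribution from $(I-v_k\otimes v_k) v_h$, and only afterwards replace $v_k\otimes v_k$ by $v_*\otimes v_*$ in the remaining bounded-operator terms.
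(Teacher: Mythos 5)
Your argument is correct and follows essentially the same route as the paper: Taylor expansion about the dimer saddle $(x_h,v_h)$, use of the relations \eqref{eq:dimer_saddle_system} to handle the zeroth-order terms, identification of the leading operators from \eqref{eq:err_Dx2Eh}--\eqref{eq:err_DxDvEh}, and then replacement of $(x_h,v_h,\lambda_h)$ by $(x_*,v_*,\lambda_*)$ at $O(h^2)$ cost via Proposition~\ref{th:dimer_saddle}. The only (cosmetic) difference is in handling the zeroth-order term of $s$: the paper adds the exact zero $(I-v_h\otimes v_h)H_h(x_h;v_h)$ and then decomposes the resulting product difference, whereas you expand $H_h(x_k;v_k)$ directly and extract the $\lambda_h f_k$ contribution through the identity $(I-v_k\otimes v_k)v_h=-f_k+O(\|f_k\|^2)$; both are the same cancellation organised differently.
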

\begin{proof}
  To prove \eqref{eq:prfa:exp_p} we first note the following
  identities which are easy to establish:
  \begin{align}
    \D_x \E_h(x, v) &= \D E(x) + O(h^2), \notag\\
    \D_x \E_h(x_k, v_k) - \D_x \E_h(x_h, v_h) &= O(r_k), \notag\\
    v_k \otimes v_k - v_h \otimes v_h &= O(r_k), \notag\\
    \D_x^2 \E_h(x_h, v_h) &= \D^2 E(x_h) + O(h^2) = \D^2 E(x_*) +
    O(h^2) \notag \\ 
    \D_x \D_v \E_h(x_h, v_h) &= \smfrac{1}{2} \b( \D^2 E(x_h+hv_h) -
    \D^2 E(x_h-hv_h) \b) = O(h^2).     \label{eq:nablavxh}
  \end{align}
  Using these identities, we can expand
  \begin{align*}
    p &= - (I-2 v_k\otimes v_k) \b( \D_x \E_h(x_k, v_k) - \D_x \E_h(x_h, v_h)\b), \\
    &= -(I-2v_h \otimes v_h) \b(  \D_x^2 \E_h(x_h, v_h) e_k + \D_x\D_v
    \E_h(x_h, v_h) f_k \b) + O(r_k^2) \\
    &=  - (I-2 v_h\otimes v_h) \D^2 E(x_h) e_k + O(r_k^2 + h^2 r_k) \\
    &= - (I-2v_*\otimes v_*) \D^2 E(x_*) e_k + O(r_k^2 + h^2 r_k) \\
    &= - A e_k + O(r_k^2+h^2 r_k).
  \end{align*}

  To prove \eqref{eq:prfa:exp_s}, we first note that, with $\|v\|=1$,
  \begin{align*}
    H_h(x; v) &= \mint_{-1}^1 \D^2 E(x+thv) \dt \, v = \D^2 E(x) v +
    O(h^2), \\
    H_h(x_h; v_h) &= \D^2 E(x_h) v_h + O(h^2) = \D^2 E(x_*) v_* +
    O(h^2),  \\
    H_h(x_k; v_k) - H_h(x_h; v_h) &= \mint_{-1}^1 \B( \D^2 E(x_k+th
    v_k) - \D^2 E(x_h+th v_h) \B) \dt \, v_k  \\
    & \qquad \qquad + \mint_{-1}^1 \D^2
    E(x_h+thv_h) \dt (v_k - v_h) \\
    &= \mint_{-1}^1 \B( \D^3 E(x_h+thv_h) \b[ (x_k-x_h) + th (v_k-v_h)
    \b] \dt v_h \\
    & \qquad \qquad + \D^2 E(x_*) (v_k - v_h) + O( r_k^2 + h^2 r_k) \\
    &= (\D^3 E(x_*) v_*) e_k + \D^2 E(x_*) f_k + O(h^2 r_k + r_k^2),
  \end{align*}
  where we interpret $\D^3 E(x) \cdot v \in L(X)$ via the action $w
  \cdot ((\D^3 E(x) \cdot v) z) = \lim_{t \to 0} t^{-1} w \cdot ((\D^2
  E(x+tv)-\D^2 E(x)) z)$. Finally, we also have
  \begin{align*}
    (v_k\otimes v_k - v_h\otimes v_h) H_h(x_h;v_h) &= (v_k\otimes v_k
    - v_h\otimes v_h) \D^2 E(x_*) v_* + O(h^2 r_k) \\
    &= \lambda_* (v_k\otimes v_k - v_h\otimes v_h) v_* + O(h^2 r_k) \\
    &= \lambda_* (v_k\otimes v_k - v_h\otimes v_h) v_h + O(h^2 r_k) \\
    &= \lambda_* (v_k-v_h) + \lambda_* v_k ((v_k-v_h) \cdot v_h) +
    O(h^2 r_k) \\
    &= \lambda_* f_k + O(r_k^2 + h^2 r_k).
  \end{align*}
  In the very last line we also used the fact that $v_k \cdot v_h - 1
  = \smfrac12 \|v_k - v_h\|^2$.

  Using these identities, we can compute
  \begin{align*}
    s &= - (I-v_k\otimes v_k) H_h(x_k; v_k) \\
    &= (I-v_h \otimes v_h) H_h(x_h; v_h) - (I-v_k\otimes v_k) H_h(x_k;
    v_k) \\
    &= -(I-v_k\otimes v_k) \b( H_h(x_k; v_k) - H_h(x_h; v_h) \b) + (v_k
    \otimes v_k - v_h \otimes v_h) H_h(x_h; v_h) \\
    &= - (I-v_k\otimes v_k) \b( (\D^3 E(x_*)v_*) e_k + \D^2 E(x_*) f_k
    \b)  + O(h^2 r_k + r_k^2) \\
    & \qquad \qquad + \lambda_* f_k + O(r_k^2+h^2 r_k) \\
    &=: -B e_k + \b[\lambda_* I - (I-v_* \otimes v_*) \D^2 E(x_*)\b] f_k +
    O(r_k^2+h^2 r_k)  \\
    &= - B e_k - C f_k + O(r_k^2+h^2 r_k).  \qedhere
  \end{align*}
\end{proof}


From Lemma \ref{th:prfa:lemma1} it follows in particular that $s =
O(r_k)$. Hence, Taylor expansions of sine and cosine in the identity
\begin{displaymath}
  v_{k+1} = \cos\b( \|s\| \beta_k) v_k + \sin(\|s\| \beta_k) \smfrac{s}{\|s\|},
\end{displaymath}
yield
\begin{displaymath}
  f_{k+1} = f_k + \beta_k s + O(\beta_k^2 s_k^2) 
\end{displaymath}
Using Lemma \ref{th:prfa:lemma1}, the identity $e_{k+1} = e_k + \alpha_k p$,
and the fact that $\beta_k$ is bounded,
we therefore obtain identity \eqref{eq:prfa:mainstep} in the proof outline.

Upon defining 
\begin{displaymath}
  {\bf A}_k := \left( \begin{matrix} \alpha_k A & 0 \\ \beta_k B &
      \beta_k C  \end{matrix} \right) \quad \text{and} \quad
  {\bf e}_{k} =
  \left( \begin{matrix} e_k \\ f_k \end{matrix} \right)
\end{displaymath}
\eqref{eq:prfa:mainstep} reads
\begin{equation}
  \label{eq:prfa_mainstep-v2}
  {\bf e}_{k+1} = (I-{\bf A}_k) {\bf e}_k + {\bf t}_k,
\end{equation}
where
\begin{displaymath}
  \| {\bf t}_k \| \leq C_{\rm t} (\alpha_k+\beta_k) (h^2 + r_k) r_k.
\end{displaymath}

Due to the fact that $A$ is symmetric and positive definite, it
follows that, for $\bar\alpha,\bar\beta$ chosen sufficiently small and
$\underline{\alpha} = \inf_k \alpha_k, \underline{\beta} = \inf_k
\beta_k > 0$, the spectrum of $I - {\bf A}_k$ is real and belongs to
$[0, 1 - \epsilon]$ for some $\epsilon > 0$, that depends on
$\underline{\alpha}, \underline{\beta}$. This will be crucial later in
the proof.

  \begin{lemma}
    Let ${\bf P}_{n,k} := \prod_{i = n}^k (I-{\bf A}_i)$, for $0 \leq
    n \leq k$, then there exist constants $C_1 > 1, \mu \in (0, 1)$
    such that
    \begin{equation}
      \label{eq:est_prod_Ak}
      \| {\bf P}_{n,k} \| \leq C_1 \mu^{k-n+1}.
    \end{equation}
  \end{lemma}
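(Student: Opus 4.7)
The plan is to exploit the block lower triangular structure of each factor $I-{\bf A}_i$ to reduce the operator norm bound to a pair of standard contraction estimates for $I-\alpha_i A$ and $I-\beta_i C$, plus an off-diagonal term that is handled by an explicit summation.

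First, I would write
\[
I-{\bf A}_i = \left(\begin{matrix} L_i & 0 \\ N_i & M_i \end{matrix}\right),
\qquad L_i = I-\alpha_i A,\ M_i = I-\beta_i C,\ N_i = -\beta_i B.
\]
Since the class of block lower triangular operators is closed under composition, the product ${\bf P}_{n,k} = (I-{\bf A}_k)(I-{\bf A}_{k-1})\cdots (I-{\bf A}_n)$ is again block lower triangular, with diagonal blocks $\prod_{i=n}^k L_i$ and $\prod_{i=n}^k M_i$ (ordered from $k$ down to $n$), and off-diagonal block
\[
Q_{n,k} = \sum_{j=n}^k \bigl(M_k\cdots M_{j+1}\bigr)\, N_j\, \bigl(L_{j-1}\cdots L_n\bigr),
\]
with the convention that empty products equal $I$.

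Next I would establish uniform contractivity of the diagonal blocks. Since $A$ and $C$ are symmetric and positive definite (by the structural identities \eqref{eq:prfa:defn_ABC}), choosing $\bar\alpha < 1/\lambda_{\max}(A)$ and $\bar\beta < 1/\lambda_{\max}(C)$ ensures that $L_i$ and $M_i$ are symmetric positive definite for all admissible $\alpha_i,\beta_i$, with
\[
\|L_i\| \leq 1-\underline{\alpha}\,\lambda_{\min}(A),\qquad \|M_i\|\leq 1-\underline{\beta}\,\lambda_{\min}(C).
\]
Setting $\mu_0 := \max\bigl(1-\underline{\alpha}\lambda_{\min}(A),\,1-\underline{\beta}\lambda_{\min}(C)\bigr)\in(0,1)$, both diagonal block products are bounded by $\mu_0^{k-n+1}$. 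Using $\|N_j\|\leq \bar\beta\|B\|$, submultiplicativity in the formula for $Q_{n,k}$ yields
\[
\|Q_{n,k}\| \leq \sum_{j=n}^k \mu_0^{k-j}\, \bar\beta\|B\|\, \mu_0^{j-n} = \bar\beta\|B\|\,(k-n+1)\,\mu_0^{k-n}.
\]

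Finally, to absorb the linear prefactor $(k-n+1)$, I would fix any $\mu\in(\mu_0,1)$ and use the elementary estimate $(k-n+1)\mu_0^{k-n}\leq C_0 \mu^{k-n+1}$ for a constant $C_0$ depending only on $\mu_0$ and $\mu$. Combining this with the direct bounds on the diagonal blocks, the overall operator norm of the block triangular matrix ${\bf P}_{n,k}$ is bounded (e.g.\ via the equivalence of the $\|\cdot\|_{X\times X}$ norm with the sum of entrywise block norms) by $C_1 \mu^{k-n+1}$ for a suitable $C_1 > 1$, which is \eqref{eq:est_prod_Ak}. The main (mild) obstacle is the polynomial factor from the off-diagonal block; it is resolved cleanly by trading a slight loss in the decay rate for a uniform constant, which causes no difficulty since $\mu<1$ suffices for the subsequent stability argument.
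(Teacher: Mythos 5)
Your proof is correct and follows essentially the same route as the paper: both exploit the block lower-triangular structure of $I-{\bf A}_i$, bound the two diagonal block products geometrically by a contraction factor $\mu_0 = \max(1-\underline\alpha\lambda_{\min}(A),\,1-\underline\beta\lambda_{\min}(C))$, observe that the off-diagonal block obeys a bound of the form $(k-n+1)\mu_0^{k-n}$, and absorb the linear prefactor by trading a slight loss in the geometric rate. The only cosmetic difference is that you obtain the off-diagonal estimate via the explicit closed-form sum $Q_{n,k}=\sum_{j=n}^k (M_k\cdots M_{j+1})N_j(L_{j-1}\cdots L_n)$, whereas the paper derives the same bound by induction from the one-step recursion $B_{k+1}=-\beta_{k+1}BA_k+(I-\beta_{k+1}C)B_k$.
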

  \begin{proof}
    We assume, without loss of generality, that $n = 0$. First, we
    note that the diagonal blocks
    \begin{displaymath}
      A_k := [{\bf P}_{0,k}]_{xx} = \prod_{i=0}^k (I-\alpha_k A),
      \quad \text{and} \quad 
      C_k := [{\bf P}_{0,k}]_{vv} = \prod_{i=0}^k (I-\beta_k C),
    \end{displaymath}
    and it is easy to see that, for 
    $\underline{\alpha} \leq \alpha_i \leq \bar\alpha, 
     \underline{\beta} \leq \beta_i \leq \bar\beta$ chosen sufficiently small,
    that
    \begin{equation}
      \label{eq:ac-bnd}
      \| A_k \| \leq \bar\mu^k \qquad \text{and} \qquad 
      \| C_k \| \leq \bar\mu^k.
    \end{equation}
    where $\bar\mu := \max \b( 1 - \underline{\alpha} \inf \sigma(A),
                           1 - \underline{\beta} \inf \sigma(C)\b) \in (0,1)$.

    Since the off-diagonal block $[{\bf P}_{0,k}]_{xv} = 0$,
    it remains to estimate the off-diagonal block $B_k := [{\bf
      P}_{0,k}]_{vx}$. We use induction over $k$. Let 
      $C_* := \bar\mu^{-1} \underline\beta \| B\|$ and suppose that 
    \begin{equation}
      \label{eq:b-bnd}
    \|B_k \| \leq C_* k \bar\mu^k.
    \end{equation}
    Then, using
    \begin{displaymath}
      B_{k+1} = -\beta_{k+1} B A_k + (I-\beta_{k+1} C) B_k,
    \end{displaymath}
    as well as $\| I-\beta_{k+1} C \| \leq \bar\mu$ we can estimate
    \begin{displaymath}
      \|B_{k+1} \| \leq  \underline\beta \| B\| \bar\mu^k + 
                          \bar\mu C_* k \bar\mu^k 
       = C_* (k+1) \bar\mu^{k+1},
    \end{displaymath}
    which establishes the induction since the result is true by
    definition of $C_*$ when $k = 0$.  

    Now pick $\tau > 1$ so that $\mu := \tau \bar\mu < 1$.
    Then \eqref{eq:ac-bnd} give that 
    $\| A_k \| \leq \mu^k$ and $\| C_k \| \leq \mu^k$, while
    it follows from \eqref{eq:b-bnd} and by maximizing $x \tau^{-x}$ that
    \begin{displaymath}
    \|B_k \| \leq C_* ( k / \tau^k ) \mu^k \leq 
    ( C_* / \tau \ln \tau ) \mu^k.
    \end{displaymath}
    The result now follows from the inequality
    \begin{displaymath}
      \| {\bf P}_{0,k} \| \leq \b( \|[{\bf P}_{0,k}]_{xx} \|^2 +
      \|[{\bf P}_{0,k}]_{vx} \|^2 +  
      \|[{\bf P}_{0,k}]_{vv} \|^2 \b)^{1/2},
    \end{displaymath} 
  and by defining $C_1 := \mu^{-1}  \sqrt{2 + ( C_* / \tau \ln \tau )^2}$.

  \end{proof}

  It is straightforward to prove that
  \begin{displaymath}
    {\bf e}_{k+1} = {\bf P}_{0,k} {\bf e}_0 + {\bf t}_{k} + {\bf
      P}_{k,k} {\bf t}_{k-1} + {\bf P}_{k-1,k} {\bf t}_{k-2} + \dots +
    {\bf P}_{1,k} {\bf t}_0, 
  \end{displaymath}
  which implies
  \begin{displaymath}
    \| {\bf e}_{k+1} \| \leq C_1 \mu^{k+1} \| {\bf e}_0 \| +
    \sum_{i = 0}^{k} C_1 \mu^i \| {\bf t}_{k-i} \|,
  \end{displaymath}
  that is,
  \begin{equation}
    \label{eq:prfa_almostdone}
    r_{k+1} \leq C_2\bg( \mu^{k+1} r_0 + \sum_{i = 0}^k
    \mu^{i+1} (r_{k-i}+h^2) r_{k-i} \bg),
  \end{equation}
  for some $C_2 \geq C_1$.

  We make another induction hypothesis that, 
  \begin{equation}
    \label{eq:prfa_laststep}
    r_i \leq C_3 \gamma^i r_0,
  \end{equation}
  where $\gamma \in (\mu, 1)$ and $C_3 > C_2$ are arbitrary. 
  The statement \eqref{eq:prfa_laststep} is clearly true for $i =
  0$. Assume now that it holds for $i = 0, \dots, k$, then
  \eqref{eq:prfa_almostdone}, and using $\mu/\gamma < 1$ yields
  \begin{align*}
    r_{k+1} &\leq C_2 \gamma^{k+1} r_0 \bg(
    \b(\smfrac{\mu}{\gamma}\b)^{k+1} 
    + \sum_{i = 0}^k \b(\smfrac{\mu}{\gamma}\b)^{i+1} \B( C_3
    \gamma^{k-i} r_0 + h^2 \B) C_3 \bg) \\
    &\leq C_2\gamma^{k+1}r_0 \bg( 1 + \frac{C_3^2 r_0 + C_3
      h^2}{1-\mu/\gamma} \bg).
  \end{align*}
  Since $C_3 > C_2$, upon choosing $r_0, h$ sufficiently small, we can
  achieve that
  \begin{displaymath}
    C_2  \bg( 1 + \frac{C_3^2 r_0 + C_3
      h^2}{1-\mu/\gamma} \bg) \leq C_3,
  \end{displaymath}
  hence \eqref{eq:prfa_laststep} holds also for $i = k+1$. This
  completes the proof of \eqref{eq:prfa_laststep} and hence of 
  Theorem~\ref{th:local_conv_dimer}~(a).

\subsection{Proof of Theorem \ref{th:local_conv_dimer} (b)}
\label{sec:prfb}
We begin with a basic auxiliary result.

\begin{lemma}
  \label{th:aux_lemma_Eh}
  Let $(x_*, v_*, \lambda_*)$ be an index-1 saddle and $\mu_* :=
  \inf_{\|w\| = 1, w \perp v_*} (\D^2 E(x_*) w) \cdot w > 0$. Then,
  there exists $r > 0$ and $h_0 > 0$ (chosen independently of one
  another) such that the following hold:
 \begin{itemize}
 \item[(i)] If $x \in B_r(x_*)$ then $\D^2 E(x)$ has index-1 saddle
   structure and, if $(\lambda, v)$ is the smallest eigenpair of $\D^2
   E(x)$, then $\lambda \leq \lambda_*/2$ and $(\D^2 E(x) w) \cdot w
   \geq \mu_*/2 \|w\|^2$ for $w \perp v$.

 \item[(ii)] $V(x)$ is well-defined for all $x \in B_r(x_*)$ and
     $h \in (0,h_0]$, and $x \mapsto V(x) \in C^1(B_r(x_*))$.

 \item[(iii)] $E_h \in C^4(B_r(x_*))$ with
   \begin{align*}
     \D E_h(x) = \D_x \E_h(x, V(x)) & = \smfrac12 \b(\D E(x+h V(x)) +
     \D E(x-h V(x)) \b), \quad \text{and} \\
     \| \D^2 E_h(x) - \D^2 E(x) \| & \leq C_0 h^2
   \end{align*}
   for $x \in B_r(x_*)$, where $C_0$ is independent of $x, h$.

 \item[(iv)] Let $x \in B_r(x_*)$ and let $(\lambda, v)$ be the
   minimal eigenpair of $\D^2 E_h(x)$, then $\|v - V(x)\| \leq C h^2$,
   where $C$ is independent of $x, h$.  
%
 \end{itemize}
\end{lemma}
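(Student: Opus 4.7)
The strategy is to establish the four parts in order, with (i) providing uniform spectral-gap information, (ii) following from the implicit function theorem applied to a rescaled Lagrange system, (iii) from an envelope-theorem calculation combined with the Taylor identities \eqref{eq:err_Dx2Eh}, \eqref{eq:err_DxDvEh}, and (iv) from a standard spectral perturbation argument applied to (iii).

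For (i), I would rely on the local Lipschitz dependence of $x \mapsto \D^2 E(x)$ (since $E \in C^4$) together with the fact that $\D^2 E(x_*)$ has $\lambda_*$ as a simple eigenvalue isolated from the rest of its spectrum (which is bounded below by $\mu_*$) by a gap of size $\mu_* - \lambda_* > 0$. Kato's theory of analytic perturbations of isolated simple eigenvalues then allows one to shrink $r$ so that, for all $x \in B_r(x_*)$, $\D^2 E(x)$ retains a unique simple eigenvalue at most $\lambda_*/2 < 0$, while the remainder of its spectrum stays bounded below by $\mu_*/2$.

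For (ii), the plan is to apply the IFT to the rescaled KKT system
\[
G(x, v, \lambda; h) := \B( h^{-2} \D_v \E_h(x, v) - \lambda v,\; \smfrac12(\|v\|^2 - 1) \B) = 0,
\]
extended continuously to $h = 0$ via $h^{-2} \D_v \E_h(x, v)\big|_{h=0} := \D^2 E(x) v$, which is legitimate by \eqref{eq:defn:errorH}. At $(x_*, v_*, \lambda_*, 0)$ the residual vanishes, and the linearisation in $(v, \lambda)$ is precisely the block \eqref{eq:prf_dimersaddle:eigenblock}, already shown to be an isomorphism in the proof of Proposition \ref{th:dimer_saddle}. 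The IFT then yields a unique $C^1$ branch $(V(x), \lambda(x))$ on $B_r(x_*) \times [0, h_0]$; (i) guarantees that this branch corresponds to the global minimum on $S_1$, rather than merely a stationary point.

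For (iii), the regularity $E_h \in C^4$ follows from $\E_h \in C^4$ and the smoothness of $V$ inherited from the IFT. The envelope calculation uses that $V$ satisfies $\D_v \E_h(x, V(x)) = h^2 \lambda(x) V(x)$ together with $[\D V(x)]^\top V(x) = 0$ (from $\|V(x)\|^2 = 1$), giving
\[
\D E_h(x) = \D_x \E_h(x, V(x)) + [\D V(x)]^\top \D_v \E_h(x, V(x)) = \D_x \E_h(x, V(x)).
\]
Differentiating once more produces $\D^2 E_h(x) = \D_x^2 \E_h(x, V(x)) + \D_x \D_v \E_h(x, V(x)) \D V(x)$, which by \eqref{eq:err_Dx2Eh}, \eqref{eq:err_DxDvEh} and the uniform boundedness of $\D V(x)$ equals $\D^2 E(x) + O(h^2)$. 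For (iv), I would combine this $O(h^2)$ estimate with the uniform spectral gap from (i) and invoke the Davis–Kahan $\sin\Theta$ theorem (or its Hilbert-space analogue) to conclude that the minimal eigenvector of $\D^2 E_h(x)$ differs by $O(h^2)$ from that of $\D^2 E(x)$; the optimality analysis in (ii) likewise gives $V(x) = w(x) + O(h^2)$, where $w(x)$ is the minimal eigenvector of $\D^2 E(x)$, and the triangle inequality finishes the proof.

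The main obstacle is the careful Hilbert-space IFT step in (ii): one has to check that after the $h^{-2}$ rescaling the linearisation of $G$ in $(v, \lambda)$ is invertible with bounds uniform as $h \to 0$, and that the branch $(V, \lambda)$ depends jointly smoothly on $(x, h)$ so that the later envelope calculation in (iii) may differentiate through it. Once this is handled, parts (iii) and (iv) follow rather mechanically from the Taylor identities \eqref{eq:err_DxEh}–\eqref{eq:err_DxDvEh} and standard spectral perturbation.
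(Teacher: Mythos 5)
Your proposal is correct and follows essentially the same route as the paper's very terse proof, which establishes (i) from local Lipschitz continuity of $\D^2 E$, (ii) by an implicit-function-theorem argument analogous to Proposition~\ref{th:dimer_saddle}, (iii) by a short Taylor expansion, and (iv) by another Proposition~\ref{th:dimer_saddle}-style perturbation argument. Your elaborations --- Kato perturbation theory for (i), the explicit envelope calculation for (iii), and Davis--Kahan for (iv) --- supply exactly the standard machinery the paper's sketch implicitly appeals to, and you are right to observe that the identity $\D E_h(x) = \D_x \E_h(x, V(x))$ is an envelope-theorem consequence of the Lagrange stationarity of $V$ together with $[\D V(x)]^* V(x) = 0$, rather than a Taylor expansion as the paper's wording might suggest.
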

\begin{proof}
  For $r$ sufficiently small, the statement (i) is an obvious
  consequence of $x_*$ being an index-1 saddle and $\D^2 E$ locally
  Lipschitz continuous (which follows since $E \in C^4(X)$).

  The statement (ii) is proven similarly as Proposition
  \ref{th:dimer_saddle}, provided $h_0$ is chosen sufficiently small
  (depending on $\lambda_*, \mu_*$ and on derivatives of $E$ in
  $B_{2r}(x_*)$). The $C^1$-dependence of $V(x)$ on $x$ is a
    consequence of the implicit function theorem.

  The statement (iii) follows from an elementary Taylor expansion.

  Finally, (iv) follows again from (iii) and an argument analogous to
  Proposition \ref{th:dimer_saddle}.
\end{proof}

To complete the proof of Theorem \ref{th:local_conv_dimer}(b) we first
note that, according to Lemma \ref{th:aux_lemma_Eh}(ii), Step (2) of
Algorithm 2 is indeed well-defined, provided that we can ensure that
the iterates never leave a neighbourhood of $x_h$ and hence of
$x_*$. This will be established.

Fix $r, h_0$ sufficiently small so that Theorem \ref{th:dimer_saddle}
and Lemma \ref{th:aux_lemma_Eh} apply. Let $e_k := x_k - x_h$ and $r_k
:= \|e_k\|$. Let $s := - (I-2v_k\otimes v_k) \D E_h(x_k)$ be the
search direction and $\alpha_k > 0$ the step size, then 
\begin{displaymath}
  e_{k+1} = e_k + \alpha_k s
\end{displaymath}
Applying Lemma \ref{th:aux_lemma_Eh}(iii) we can expand
\begin{align*}
  \D E_h(x_k) &= \D E_h(x_k) - \D E_h(x_h) 
  = \D^2 \E_h(x_h) e_k + O(r_k^2) = \D^2 E(x_*) e_k + O( h^2 r_k + r_k^2).
\end{align*}
Arguing similarly as in the proof of part (a), 
\begin{align*}
  e_{k+1} &= e_k - \alpha_k (I - 2 v_k \otimes v_k) \D^2 E(x_*) e_k +
  O(h^2 r_k) \\
  &= e_k - \alpha_k (I - 2 v_* \otimes v_*) \D^2 E(x_*) e_k + O\b( h^2 r_k
  + r_k^2 \b) \\
  &= (I - \alpha_k A) e_k + O\b( h^2 r_k + r_k^2\b).
\end{align*}
For $\bar\alpha$ sufficiently small it is straightforward to see that
$\|I - \alpha_k A \| \leq 1 - \alpha_k \epsilon \leq 1 -
\underline\alpha \epsilon =: \gamma$, where $\epsilon > 0$ and $\gamma
\in (0, 1)$, and we therefore obtain
\begin{displaymath}
  r_{k+1} \leq (\gamma + C_1 h^2 + C_2 r_k) r_k.
\end{displaymath}
Clearly, for $h_0$ and $r_0$ chosen sufficiently small we obtain a
contraction, that is, $r_{k+1} \leq \gamma' r_k$ for some $\gamma' \in
(\gamma, 1)$.

This completes the proof of Theorem \ref{th:local_conv_dimer}(b).

\subsection{Contraction of steepest descent with linesearch}
\label{sec:analysis_sd}
In the section following this one, 
we will use statements about the steepest descent
method with backtracking that we suspect must be well known. Since 
we have been unable to find precisely the versions we require, we 
give both below, the latter with a full proof.

\begin{lemma}
  \label{th:contrsd_eucl}
  Let $X$ be a Hilbert space, $F \in C^3(X)$, and $x_* \in X$ with $\D
  F(x_*) = 0$ and $\D^2 F(x_*)$ positive definite, i.e., $u \cdot
  (\D^2 F(x_*) u) \geq \mu \|u\|^2$ for $\mu > 0$. Let $\| u \|_*^2 :=
  u \cdot (\D^2 F(x_*) u)$. Further, let $\bar\alpha >
  \underline{\alpha} > 0$, $\Theta \in (0, 1)$.

  Then, there exists $r > 0$ and $\gamma \in (0, 1)$, depending only
  on $\underline{\alpha}, \bar\alpha, \mu, \|\D^j F(x) \|$ for $x \in
  B_{1}(x_*)$, such that, for all $\alpha \in [\underline{\alpha},
  \bar\alpha]$ and for all $x \in B_r(x_*)$ satisfying the Armijo
  condition
  \begin{displaymath}
    F(x-\alpha \D F(x)) \leq F(x) - \Theta \alpha \| \D F(x) \|^2,
  \end{displaymath}
  we have
  \begin{displaymath}
    \b\| [x - \alpha \D F(x)] - x_* \b\|_* \leq \gamma \| x - x_* \|_*.
  \end{displaymath}
\end{lemma}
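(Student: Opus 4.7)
The plan is to convert the Armijo decrease in $F$ into a contraction estimate in the energy norm $\|\cdot\|_*$ via Taylor expansion about the minimum $x_*$. Write $y := x - \alpha \D F(x)$, $H := \D^2 F(x_*)$, and $e := x - x_*$. Since $\D F(x_*) = 0$ and $F \in C^3$, for every $z \in B_1(x_*)$ we have the expansions
\[
  F(z) - F(x_*) = \smfrac{1}{2} \|z - x_*\|_*^2 + O(\|z - x_*\|^3), \qquad \D F(z) = H(z - x_*) + O(\|z - x_*\|^2).
\]
By positive definiteness of $H$, for $r$ sufficiently small the gradient expansion yields $\|\D F(x)\| \geq (\mu/2) \|e\|$. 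Together with the upper bound $\|e\|_*^2 \leq \|H\| \|e\|^2$, this gives the key inequality
\[
  \|\D F(x)\|^2 \geq c_1 \|e\|_*^2, \qquad c_1 := \mu^2 / (4 \|H\|).
\]

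For $\alpha \leq \bar\alpha$ and $x \in B_r(x_*)$, the trial point satisfies $\|y - x_*\| \leq C_1 \|e\|$ for some $C_1$ depending only on $\bar\alpha$ and $\|H\|$, by local Lipschitz continuity of $\D F$. Applying the first Taylor expansion to both $x$ and $y$ and inserting the Armijo hypothesis $F(y) \leq F(x) - \Theta \alpha \|\D F(x)\|^2$ yields
\[
  \smfrac{1}{2} \|y - x_*\|_*^2 \leq \smfrac{1}{2} \|e\|_*^2 - \Theta \alpha \|\D F(x)\|^2 + C_2 \b(\|e\|^3 + \|y - x_*\|^3\b).
\]
Substituting the bounds on $\|\D F(x)\|^2$ and $\|y - x_*\|$ and using $\alpha \geq \underline{\alpha}$, this rearranges to
\[
  \|y - x_*\|_*^2 \leq \b(1 - 2 \Theta \underline\alpha c_1 + C_3 r \b) \|e\|_*^2.
\]
Choosing $r$ small enough that $C_3 r < \Theta \underline\alpha c_1$ yields the desired contraction with $\gamma^2 := 1 - \Theta \underline\alpha c_1 \in (0, 1)$.

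The main technical obstacle is ensuring that the Armijo decrease, which is only quadratic in $\|e\|$ by the gradient bound, uniformly dominates the cubic Taylor remainders in both $\|e\|$ and $\|y - x_*\|$ for every admissible $\alpha \in [\underline\alpha, \bar\alpha]$. This is accomplished precisely because $\underline\alpha > 0$ and $\Theta > 0$ are uniform positive constants, so the $O(r)$ perturbation of the contraction factor can be absorbed by shrinking $r$. All constants $c_1, C_1, C_2, C_3$ arising in the argument depend only on $\mu$, $\|H\|$, $\bar\alpha$, and bounds on $\|\D^j F\|$ over $B_1(x_*)$, so the final $\gamma$ inherits only the stated dependencies.
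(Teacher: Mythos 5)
Your proof is correct, and it is essentially the intended argument: the paper dispatches this lemma as ``a simplified version of the proof of Lemma~\ref{th:contrSX}'', and your argument is exactly the ``long step'' branch of that proof (Step 4 for $\alpha\ge\hat\alpha$), specialized to the unconstrained Hilbert-space setting; because here $\alpha\ge\underline\alpha>0$ is assumed, the short-step case of the spherical lemma is not needed and your single Armijo-based estimate suffices. One cosmetic point worth tidying: the chain $\|y-x_*\|_*^2\le(1-2\Theta\alpha c_1+C_3 r)\|e\|_*^2$ together with $\|y-x_*\|_*^2\ge 0$ already forces the bracketed factor to be nonnegative whenever the Armijo condition can be met, so after shrinking $r$ one should set $\gamma^2:=\max\{1-2\Theta\underline\alpha c_1+C_3 r,\,0\}$ (or simply cap $\gamma$ away from $0$), since $1-\Theta\underline\alpha c_1$ need not be positive a priori.
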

\begin{proof}
  The proof is a simplified version of the proof of Lemma
  \ref{th:contrSX} below.
\end{proof}

We now generalize the foregoing result to steepest descent on the unit
sphere. Convergence results for many methods on manifolds are given by
\cite[Chap.4]{AbsiMahoSepu08}. See specifically 
\cite[Thm.4.5.6]{AbsiMahoSepu08} and \cite{AbsiMahoTrum13}.

\begin{lemma}
  \label{th:contrSX}
  Let $X$ be a Hilbert space, $S_X := \{ u \in X \sep \|u\|=1\}$, $P_v
  := v \otimes v$ and $P_v' := I-P_v$ for $v \in S_X$. Let $F \in
  C^3(X)$,
  \begin{displaymath}
    g(v) := P_v' \D F(v) \qquad \text{and} \qquad
    H(v) := P_v' \D^2 F(v) P_v' - \b(\D F(v) \cdot v\b) I.
  \end{displaymath}
  We assume that there exists $v_* \in S_X$ and $\mu > 0$ such that
  \begin{align}
    g(v_*) = 0 
    \qquad \text{and} \qquad
    u\cdot \b( H(v_*) u \b) \geq \mu \|u\|^2 \quad \forall u \in X.
   \label{eq:mual}
  \end{align}
  Let $\|u\|_* := \sqrt{ u\cdot (H(v_*) u)}$.

  Let $\bar\alpha > 0$, $\Theta \in (0, 1)$, and for $v \in S_X$ and
  $\alpha \in \R$, denote
  \begin{displaymath}
    v_\alpha := \cos\b( \alpha \|g(v)\| \b) v - \sin\b(\alpha
    \|g(v)\| \b) \frac{g(v)}{\|g(v)\|}.
  \end{displaymath}
  Then, there exists $r > 0$ such that, for all $v \in B_r(v_*) \cap
  S_X$ and $\alpha \in (0, \bar\alpha]$ satisfying the Armijo
  condition
  \begin{displaymath}
    F(v_\alpha) \leq F(v) - \Theta \alpha \| g(v) \|^2,
  \end{displaymath}
  there exists a constant $\gamma(\alpha) \in [0, 1)$ such that
  \begin{displaymath}
    \b\| v_\alpha - v_* \b\|_* \leq \gamma(\alpha) \| v - v_* \|_*.
  \end{displaymath}

  The contraction factor $\gamma(\alpha)$ depends on $\alpha, \mu$ and
  on $\|\D^j F(x)\|, x \in B_1(v_*)$. Moreover, for any
  $\underline{\alpha} \in (0, \bar\alpha]$, $\sup_{\alpha \in
    [\underline\alpha,\bar\alpha]} \gamma(\alpha) < 1$.
\end{lemma}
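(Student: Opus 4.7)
The plan is to mimic the standard Euclidean proof of contraction for an Armijo linesearch applied to steepest descent, but using the \emph{Lagrangian Hessian} $H(v_*)$ in place of an unconstrained Hessian. Two structural observations make this possible: the update $v \mapsto v_\alpha$ is precisely the geodesic (exponential map) on $S_X$ issuing from $v$ in the tangent direction $-g(v)$ with step length $\alpha\|g(v)\|$, and the operator $H(v_*)$ coincides with the Riemannian Hessian of $F|_{S_X}$ at the constrained critical point $v_*$. Accordingly, $\|\cdot\|_*$ is the natural ``energy norm'' in which to measure contraction.

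First I would Taylor-expand $F$ about $v_*$ along the sphere. Writing $v = v_* + u + w$ with $u = P_{v_*}'(v-v_*)$ and $w = -\smfrac12 \|u\|^2 v_* + O(\|u\|^3)$ (the unit-norm correction), using $g(v_*) = 0$ to kill the term $\D F(v_*) \cdot u$, and noting that $\D F(v_*) \cdot w = -\smfrac12 \|u\|^2 (\D F(v_*) \cdot v_*)$ combines with the quadratic term to produce exactly $\smfrac12 u \cdot (H(v_*) u)$, one obtains
\begin{equation*}
F(v) - F(v_*) = \smfrac12 \|v - v_*\|_*^2 + O(\|v-v_*\|^3).
\end{equation*}
Next I would establish the Polyak--Lojasiewicz style lower bound
\begin{equation*}
\|g(v)\|^2 \geq c\, \|v - v_*\|_*^2 + O(\|v-v_*\|^3),
\end{equation*}
for some $c = c(\mu, \|H(v_*)\|) > 0$, by Taylor-expanding $g$ about $v_*$ and using that the derivative of $g$ at $v_*$ restricted to $\{v_*\}^\perp$ is $H(v_*)$, together with \eqref{eq:mual}. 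Finally, a Taylor expansion of the sine and cosine shows that $v_\alpha = v - \alpha g(v) + O(\alpha^2 \|g(v)\|^2)$, so in particular $\|v_\alpha - v_*\| \leq C\|v-v_*\|$ for $r, \bar\alpha$ sufficiently small.

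Combining these with the Armijo condition $F(v_\alpha) - F(v_*) \leq F(v) - F(v_*) - \Theta \alpha \|g(v)\|^2$ yields
\begin{equation*}
\smfrac12\|v_\alpha - v_*\|_*^2 \leq \b(\smfrac12 - \Theta \alpha c\b) \|v-v_*\|_*^2 + O\b(\|v-v_*\|^3\b),
\end{equation*}
and absorbing the cubic remainder into a multiplicative factor (by shrinking $r$) delivers
\begin{equation*}
\|v_\alpha - v_*\|_*^2 \leq \b(1 - 2\Theta\alpha c + O(r)\b) \|v - v_*\|_*^2,
\end{equation*}
which is the desired contraction with $\gamma(\alpha) = \sqrt{1 - 2\Theta \alpha c + O(r)} \in [0,1)$. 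Since $\alpha \mapsto 2\Theta\alpha c$ is continuous and strictly positive on any $[\underline\alpha,\bar\alpha] \subset (0,\bar\alpha]$, the uniform bound $\sup_{\alpha \in [\underline\alpha,\bar\alpha]} \gamma(\alpha) < 1$ follows.

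The main obstacle will be bookkeeping the interplay between the curvature of the sphere and the cubic remainders: one must ensure that the Taylor expansion of $F$ along the sphere assembles precisely into the quadratic form $\smfrac12 u\cdot(H(v_*) u)$ (which is why the specific form $H(v) = P_v' \D^2 F(v) P_v' - (\D F(v)\cdot v) I$ is chosen, namely so that it contains both the tangentially-projected Hessian and the Lagrange-multiplier correction), and that the remainders arising from (i) the curvature correction $w$, (ii) the non-Euclidean step $v_\alpha - (v-\alpha g(v))$, and (iii) the quadratic-to-seminorm comparison all degrade the contraction factor only by $O(r)$. Once these are handled uniformly in $\alpha$, the final step is a routine choice of $r$ small enough that the $O(r)$ correction does not destroy the strict inequality.
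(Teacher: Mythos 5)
Your overall strategy — expand $F$ along the sphere to obtain $F(v)-F(v_*)=\tfrac12\|v-v_*\|_*^2+O(\|v-v_*\|^3)$, prove a Polyak--Lojasiewicz bound $\|g(v)\|^2\gtrsim\|v-v_*\|_*^2$, bound the step size, and then feed all three into the Armijo inequality — matches Steps 1--3 and the ``long step'' portion of Step 4 in the paper's proof. However, there is a genuine gap in the case of small $\alpha$, which the paper handles by a separate argument that you omit.

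The issue is quantitative. When you combine the sandwich $F(w)-F(v_*)=\tfrac12\|w-v_*\|_*^2+O(\|w-v_*\|^3)$ (applied at $w=v$ and at $w=v_\alpha$) with the Armijo decrease $-\Theta\alpha\|g(v)\|^2\leq-\Theta\alpha c\,\|v-v_*\|_*^2$, the cubic remainders from the energy expansion contribute a term of size $O(r)\,\|v-v_*\|_*^2$ that is \emph{independent of $\alpha$}: the remainder $C_1\|v-v_*\|^3$ does not vanish as $\alpha\to 0$. Your final estimate therefore reads $\|v_\alpha-v_*\|_*^2\leq(1-2\Theta\alpha c + C r)\|v-v_*\|_*^2$, and for $\alpha< Cr/(2\Theta c)$ the right-hand factor exceeds $1$, so no contraction is obtained. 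Since the lemma must produce $\gamma(\alpha)\in[0,1)$ for \emph{every} $\alpha\in(0,\bar\alpha]$, and $r$ must be chosen independently of $\alpha$, the claim $\gamma(\alpha)=\sqrt{1-2\Theta\alpha c+O(r)}\in[0,1)$ is not justified.

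The paper closes this gap by splitting into short and long steps. For $\alpha\leq\hat\alpha$ it abandons the Armijo condition entirely and linearises the map $v\mapsto v_\alpha$ directly: $v_\alpha-v_*=[I-\alpha H(v_*)](v-v_*)+O(\alpha\|v-v_*\|^2)$. Crucially, the error here carries a factor of $\alpha$, so the resulting bound $\|v_\alpha-v_*\|_*^2\leq(1-\alpha\mu+c_4\alpha r)\|v-v_*\|_*^2$ is a strict contraction for any $\alpha>0$ once $r$ is fixed small enough to ensure $c_4 r<\mu$. Only for $\alpha\geq\hat\alpha$ does the paper invoke the Armijo-based energy argument, where the decrease term $\Theta\hat\alpha\mu/2$ is bounded away from zero and can absorb the $\alpha$-independent $O(r)$ remainder. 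If you add this case distinction — or replace your Armijo argument for small $\alpha$ with the linearisation argument — your proof becomes complete and is then essentially identical to the paper's.
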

\begin{proof}
  We first note that $\|\cdot\|_*$ is an equivalent norm, that is,
  there exists a constant $C_* = \|H(v_*)\|$ such that
  \begin{equation}
    \label{contrsd_sph:norm_equiv}
    \sqrt{\mu} \| u - u' \| \leq \| u - u' \|_* \leq C_* \|u - u'\| \qquad
    \forall u, u' \in X.
  \end{equation}
  
  {\it Step 1: Expansions. } There exists a constant $C_L$ such that,
  for all $v, w \in S_X$,
  \begin{align}
    \label{eq:contrsd_sph:lip_grad}
    \b\| g(v) - g(w) \b\| &\leq C_L \| v - w \|, \quad \text{and} \\
    \label{eq:contrsd_sph:lip_hess}
    \b\| \D^2 F(v) - \D^2 F(w) \b\| &\leq C_L \|v-w\|.
  \end{align}
  since $F \in  C^3(X)$ and $S_X$ is bounded.
  For $v \in S_X$ the identity
  \begin{equation}
    \label{eq:contrsd_sph:id1}
    v_* \cdot (v - v_*) = -\smfrac12 \| v -
    v_*\|^2    
  \end{equation}
  and $g(v_*) = 0$ yields
  \begin{equation}
    \label{eq:fddotvmvs}
    \D F(v_*) \cdot (v - v_*) = \D F(v_*) \cdot \b( (v_* \otimes v_*)
    (v-v_*) \b) = \b(- \smfrac12 \D F(v_*) \cdot v_* \b) \| v - v_* \|^2,
  \end{equation}
  and therefore,
  \begin{align}
    \notag
    F(v) &- F(v_*) = \D F(v_*) \cdot (v - v_*) 
    + \bg(\int_0^1 (1-t) \D^2
    F((1-t) v_* + t v) \dt (v - v_*)\bg) \cdot (v-v_*) \\
    \label{eq:contrsd_sph:expansion_1}
    &= \smfrac12 (v-v_*) \cdot \b(\bar{H}_v (v - v_*) \b) + 
    \smfrac12 (v-v_*) \cdot 
    \b( \b[ \D^2 F(v_*) - \b(\D F(v_*) \cdot v_*\b) I \b ] \b (v - v_*) \b),
  \end{align}
  where 
  \begin{displaymath} 
    \bar{H}_v := 2 \int_0^1 (1-t) [ \D^2 F((1-t) v_* + t v) - \D^2 F(v_*) ]\dt .
  \end{displaymath}
  But
  \begin{align*}
    (v-v_*) \cdot \B(\D^2 F(v_*) (v-v_*) \B) = (v-v_*) \cdot \B(
    P_{v_*}' \D^2 F(v_*) P_{v_*}' (v-v_*) \B) + O(\|v-v_*\|^3).
  \end{align*}
  since $(v_* \otimes v_*) (v-v_*) = O(\|v-v_*\|^2)$, and 
  thus we obtain from \eqref{eq:contrsd_sph:lip_hess} 
  and \eqref{eq:contrsd_sph:expansion_1}  that
  \begin{equation}
    \label{eq:contrsd_sph:energy_error}
    \smfrac12 \| v - v_*\|_*^2 - C_1 \| v - v_* \|^3 \leq F(v) -
    F(v_*) \leq \smfrac12 \| v - v_*\|_*^2 + C_1 \| v - v_*\|^3,
  \end{equation}
  for some constant $C_1$ that depends on $C_L$.  

  {\it Step 2: Bound on descent step. }
  The Lipschitz bound
  \eqref{eq:contrsd_sph:lip_grad} implies that, for all $v \in S_X$,
  \begin{align*}
    \| v_\alpha - v_* \| &\leq \|v - v_*\| + \b|1-\cos(\alpha \|g(v)\|)\b|
    + \b|\sin(\alpha \|g(v)\|)\b| \\
    &\leq \|v - v_*\| + \smfrac12 (\alpha \|g(v)\|)^2 + \alpha \|g(v)\| \\
    &\leq \|v - v_*\| + \smfrac12 \alpha^2 C_L^2 \|v - v_*\|^2 + 
    \alpha C_L \| v - v_*\|
    \\ &\leq 
    \b( 1 + \alpha^2 C_L^2 + \alpha C_L\b) \|v - v_*\| \\
    &=: c_3(\alpha) \| v - v_* \|,
  \end{align*}
  as $|1-\cos \theta| \leq \smfrac12 \theta^2$ and 
  $|\sin\theta| \leq \theta$ for $\theta \geq 0$,
  and $\| v - v_* \| \leq 2$.
  In particular, for $r > 0$
  \begin{equation}
    \label{eq:contrsd_sph:bound_step}
    \|v_\alpha - v_* \| \leq c_3(\bar\alpha) r \qquad \forall v \in B_r(v_*) \cap S_X.
  \end{equation}

  {\it Step 3. Bound on gradient. } To obtain an error estimate from
  the Armijo condition, we must bound $\|g(v)\|^2$ below. We write
  $v_t := (1-t) v_* + t v$, then 
  \begin{align}
    \notag
    g(v) &= g(v) - g(v_*) \\
    &= \int_0^1 \frac{\dd}{\dt} \B( (I - v_t \otimes v_t) \D F(v_t)
    \B) \dt \\
    \notag
    &= \int_0^1 \B( (I-v_t\otimes v_t) \D^2 F(v_t) (v-v_*) -
    \b((v-v_*)\otimes v_t + v_t \otimes (v-v_*)\b) \D F(v_t) \B) \dt
    \\
    &= (I-v_*\otimes v_*) \D^2 F(v_*) (v-v_*) \\
    \notag
    & \qquad - \b( (v-v_*) \otimes v_* + v_* \otimes (v-v_*) \b) \D
    F(v_*) + O(\|v-v_*\|^2) \\
    \notag
    &= H(v_*) (v-v_*)  +  (I-v_*\otimes v_*) \D^2 F(v_*) (v_*\otimes v_*)
    (v-v_*)  \\
    \notag
    & \qquad - \b( \D F(v_*) \cdot (v-v_*) \b) v_*  +
    O(\|v-v_*\|^2) \\
    \label{eq:contrsd_sph:g_exp}
    &= H(v_*) (v-v_*) + O(\|v-v_*\|^2),
  \end{align}
  where we used \eqref{eq:contrsd_sph:id1} 
  and \eqref{eq:fddotvmvs} in the last step.

  Thus, for some constant $C_2$ that depends only on $C_L$, and for $v
  \in B_r(v_*) \cap S_X$, with $r \leq r_1$ and $r_1$ chosen
  sufficiently small, we obtain
  \begin{align}
    \notag
    \| g(v)\|^2 &\geq \| H(v_*) (v-v_*) \|^2 - C_2 \| v - v_*\|^3 \\
    \notag
    &\geq \mu \| H(v_*)^{1/2} (v-v_*) \|^2 - C_2 \|v-v_* \|^3 \\
    \notag
    &\geq \b( \mu - C_2 \mu^{-1}  r \b) \|v-v_*\|_*^2 \\
    \label{eq:contrsd_sph:gbound}
    & \geq \smfrac{\mu}{2} \|v-v_*\|_*^2
  \end{align}
  using  \eqref{eq:mual} and \eqref{contrsd_sph:norm_equiv}.

  {\it Step 4. Short steps. } For $\alpha$ sufficiently small, the
  Armijo condition is in fact not needed, and we can proceed without
  it. From the definition of
  $v_\alpha$ and Taylor's theorem we obtain, for $\alpha \leq \bar\alpha$
  \begin{displaymath}
    v_\alpha - v = \alpha g(v) + O( \alpha^2 \|g(v)\|^2)
  \end{displaymath}
  and hence using  \eqref{eq:contrsd_sph:g_exp}
  \begin{displaymath}
    v_\alpha - v_* = \b[I-\alpha H(v_*)\b] (v-v_*) + O(\alpha \|v-v_*\|^2)
  \end{displaymath}
  Taking the inner product with $H(v_*) (v_\alpha-v_*)$, there exists
  a constant $c_4$ that depends only on the derivatives $F$ in
  $B_1(v_*)$ such that
  \begin{align*}
    \|v_\alpha-v_*\|_*^2 \leq (v-v_*) \cdot \B( H(v_*) \b[I-\alpha
    H(v_*)\b] \B) (v-v_*) + c_4 r \alpha \| v - v_* \|_*^2.
  \end{align*}
  The eigenvalues of $H(v_*) \b[I-\alpha H(v_*)\b] \psi = \tau H(v_*)
  \psi$ are precisely $\tau = 1-\alpha \lambda$ for $\lambda \in
  \sigma(H(v_*))$. Let $\hat{\alpha} > 0$ such that $\tau \in [0, 1)$
  for all $\alpha \leq \hat\alpha$. Then, the largest eigenvalue is
  given by $1 - \alpha \mu$ and we obtain that, for $\alpha \leq
  \hat{\alpha}$,
  \begin{displaymath}
    \|v_\alpha-v_*\|_*^2 \leq (1-\alpha\mu + c_4 \alpha r) \|v-v_*\|_*^2.
  \end{displaymath}
  Choosing $r \leq r_2 \leq r_1$ sufficiently small, with the new
  restrictions depending only on $\mu$ and $c_4$, 
  and using the bound $\sqrt{1-\theta} \leq 1 - \smfrac12 \theta$
  for $\theta \in [0,1]$, we obtain that
  \begin{displaymath}
    \|v_\alpha-v_*\|_* \leq (1-\alpha\smfrac{\mu}{4}) \|v-v_*\|_*.
  \end{displaymath}
  This completes the proof of the Lemma, for the case $\alpha \leq
  \hat{\alpha}$.

  {\it Step 4. Long steps. } Let $\alpha \in [\hat{\alpha},
  \bar\alpha]$, $r \leq r_1$, $v \in B_r(v_*) \cap S_X$, $c_3 \equiv
  c_3(\bar\alpha)$, and $v_\alpha$ satisfying the Armijo condition, then
  \eqref{contrsd_sph:norm_equiv}, \eqref{eq:contrsd_sph:energy_error},
  \eqref{eq:contrsd_sph:bound_step} and \eqref{eq:contrsd_sph:gbound}
  imply
  \begin{align*}
    \b(\smfrac12 - C_1 \mu^{-1} c_3 r\b) \| v_\alpha - v_*\|_*^2 &\leq
    F(v_\alpha) - F(v_*) \\
    &\leq F(v) - F(v_*) - \Theta \alpha \| g(v) \|^2 \\
    &\leq \b(\smfrac12 + C_1 \mu^{-1} r - \Theta \hat{\alpha}
    \smfrac{\mu}{2} \b) \| v - v_*\|_*^2 \,,
  \end{align*}
  that is,
  \begin{displaymath}
    \| v_\alpha - v_*\|_* \leq \bg(\frac{1 + 2C_1 \mu^{-1} r - \Theta \hat{\alpha}
      \mu}{1 - 2 C_1 \mu^{-1} c_3 r}\bg)^{1/2} \| v - v_*\|_*.
  \end{displaymath}
  Thus, choosing $r \leq r_1$, sufficiently small, we obtain again the
  desired contraction.
\end{proof}

\subsection{Proof of Theorem \ref{th:conv_ls}, Case (ii)}
\label{sec:prfls}
%
%
Throughout this proof, we fix an index-1 saddle $(x_*, v_*,
\lambda_*)$, and assume that $h_0$ is small enough so that Proposition
\ref{th:dimer_saddle} ensures the existence of a dimer saddle $(x_h,
v_h, \lambda_h)$ in an $O(h^2)$ neighbourhood of $(x_*, v_*,
\lambda_*)$.

Until we state otherwise (namely, in \S
\ref{sec:prf_case_i}) we assume that $\D_x \E_h(x_k, v_{k-1}) \neq 0$
for all $k$. In particular, the Linesearch Dimer Algorithm is then
well-defined and produces a sequence of iterates $(x_k, v_k)_{k \in
  \N}$. The alternative, Case (i), is treated in
\S\ref{sec:prf_case_i}.


The first step is an error bound on $v_k - v_h$ in terms of $x_k -
x_h$ and the residual of $v_k$.

\begin{lemma}
  \label{th:prfls:L1}
  There exist $r, h_0, C_1 > 0$ such that, for $h \in (0, h_0]$, $x
  \in B_r(x_*)$ and $v \in B_r(v_*)$ with $\|v\| = 1$, we have
  \begin{displaymath}
    \| v - v_h \| \leq \smfrac12 C_1 \b( \|x - x_h \| +  \b\| (I-v\otimes v)
    H_h(x; v) \b\| \b) .
  \end{displaymath}
\end{lemma}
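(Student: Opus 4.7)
The plan is to view the residual $G(x, v) := (I - v\otimes v) H_h(x; v)$ as a smooth map, observe that $G(x_h, v_h) = 0$ by definition of the dimer saddle (since $H_h(x_h; v_h) = \lambda_h v_h$), and invoke a quantitative implicit function theorem on the unit sphere to recover $v - v_h$ from $G(x, v)$ and $x - x_h$.

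First, I would compute $\partial_v G(x_h, v_h)$ restricted to the tangent space $T_{v_h} S_X = \{v_h\}^\perp$. Using the product rule and $H_h(x_h; v_h) = \lambda_h v_h$, for $w \in \{v_h\}^\perp$
\begin{align*}
\partial_v G(x_h, v_h)[w]
&= -\bigl[(w\otimes v_h) + (v_h\otimes w)\bigr]\lambda_h v_h
+ (I-v_h\otimes v_h)\,\partial_v H_h(x_h; v_h)[w] \\
&= -\lambda_h w + (I-v_h\otimes v_h)\D^2 E(x_h)\,w + O(h^2)\|w\|,
\end{align*}
where I used $v_h \cdot w = 0$, together with $\partial_v H_h(x; v) = h^{-2}\D_v^2\E_h(x, v) = \D^2 E(x) + O(h^2)$ from \eqref{eq:defn:dtwoh}. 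The image lies in $\{v_h\}^\perp$, so this is a map from $\{v_h\}^\perp$ to itself.

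Next, I would show invertibility of this operator on $\{v_h\}^\perp$, uniformly for small $h$. Since $(x_h, v_h, \lambda_h) = (x_*, v_*, \lambda_*) + O(h^2)$ by Proposition~\ref{th:dimer_saddle}, and $\D^2 E(x_*)$ has index-1 saddle structure with minimal eigenvalue $\lambda_* < 0$ and second-smallest bounded below by some $\mu_* > 0$, the operator $(I - v_h\otimes v_h)\D^2 E(x_h)$ is coercive on $\{v_h\}^\perp$ with constant at least $\mu_*/2$, while $-\lambda_h \geq -\lambda_*/2 > 0$. Hence the eigenvalues of $\partial_v G(x_h, v_h)$ restricted to $\{v_h\}^\perp$ are bounded below by $\mu_*/2 - \lambda_*/2 - O(h^2) > 0$ for $h$ small, giving a uniform bound on the inverse.

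With this in hand, the standard implicit function theorem on the unit sphere yields $r, h_0 > 0$ and a $C^1$ map $x \mapsto V_h(x) \in S_X$ defined on $B_r(x_h)$ with $V_h(x_h) = v_h$, $G(x, V_h(x)) = 0$, and $\|V_h(x) - v_h\| \leq C\|x - x_h\|$. Then for $v \in B_r(v_*) \cap S_X$ and $x \in B_r(x_*)$, Taylor expanding in $v$ gives
\begin{displaymath}
G(x, v) = G(x, v) - G(x, V_h(x)) = \partial_v G(x_h, v_h)[v - V_h(x)] + O(r)\|v - V_h(x)\|.
\end{displaymath}
Since $\|v\| = \|V_h(x)\| = 1$, the component of $v - V_h(x)$ along $V_h(x)$ is $-\tfrac12\|v - V_h(x)\|^2$, so $v - V_h(x)$ is tangential up to second order. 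Applying the coercivity of $\partial_v G(x_h, v_h)$ on $\{v_h\}^\perp$ (and shrinking $r$ to absorb the perturbation), we obtain $\|v - V_h(x)\| \leq C'\|G(x, v)\|$, and the triangle inequality
\begin{displaymath}
\|v - v_h\| \leq \|v - V_h(x)\| + \|V_h(x) - v_h\| \leq C'\|G(x,v)\| + C\|x - x_h\|
\end{displaymath}
yields the claim for an appropriate constant $C_1$. The main subtlety is handling the unit-sphere constraint cleanly in the linearization; this is the only non-routine point and is dealt with by the $-\tfrac12\|v-V_h(x)\|^2$ identity above, absorbed into the higher-order error for $r$ sufficiently small.
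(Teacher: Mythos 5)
Your proof is correct and follows essentially the same route as the paper: both are quantitative inverse/implicit function theorem arguments driven by the invertibility of the linearised eigenvalue problem at the dimer saddle, which in turn comes from the index-1 structure of $\D^2 E(x_*)$. The only difference is in how the two error sources are packaged. The paper rewrites the given $v$ as a solution of the perturbed bordered system $H_h(x_h;\tilde v)=\tilde\lambda\tilde v + s$, $\tfrac12\|\tilde v\|^2=\tfrac12$, with $\lambda := H_h(x;v)\cdot v$ and a single residual $s=\bigl(H_h(x_h;v)-H_h(x;v)\bigr)+(I-v\otimes v)H_h(x;v)$ bounded by $\|x-x_h\|+\|(I-v\otimes v)H_h(x;v)\|$, and invokes the IFT once (at $x_h$, in the $(v,\lambda)$ variables); you instead introduce the intermediate object $V_h(x)$ solving the projected equation $G(x,\cdot)=0$ on $S_X$, bound $\|V_h(x)-v_h\|\lesssim\|x-x_h\|$ via the IFT for the $x$-parametrised family, and bound $\|v-V_h(x)\|\lesssim\|G(x,v)\|$ separately by Taylor expansion plus coercivity of $\partial_v G$ on the tangent space, then apply the triangle inequality. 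Your tangent-space/projected formulation and the paper's bordered-system formulation are equivalent ways of expressing the same linearised invertibility, and your treatment of the sphere constraint via $(v-V_h(x))\cdot V_h(x)=-\tfrac12\|v-V_h(x)\|^2$ correctly handles the only subtlety; the constants you quote (e.g.\ $\mu_*/2-\lambda_*/2$) are a little generous but positive, which is all that matters.
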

\begin{proof}
  Let $\lambda := H_h(x;v) \cdot v$, then
  \begin{equation}
    \label{eq:prfls:lem1:1}
    \begin{split}
      H_h(x_h; v) &= \lambda v + s, \\
      \smfrac12 \|v\|^2 &= \smfrac12,
    \end{split}
  \end{equation}
  where 
  \begin{displaymath}
    s = \b( H_h(x_h; v) - H_h(x; v) \b) + (I-v\otimes v) H_h(x; v).
  \end{displaymath}
  Since $v_h$ solves \eqref{eq:prfls:lem1:1} with $s = 0$, and since
  \begin{displaymath}
    \|s\| \leq C_2 \b(\| x - x_h \| + \| (I-v\otimes v) H_h(x; v) \| \b),
  \end{displaymath}
  the stated result follows from the Lipschitz continuity of $H_h(\cdot; v)$
  and an application of the inverse
  function theorem, in a similar spirit as the proof in
  \S\ref{sec:proof_dimer_saddle}.
\end{proof}

Next, we present a result ensuring that the rotation step of Algorithm
3 not only terminates but also produces a new dimer orientation $v_k$
which remains in a small neighbourhood of the ``exact'' orientation
$v_h$.

\begin{lemma}
  \label{th:prfls:rotation_lemma}
  There exist $r, h_0, C_2 > 0, C_3 \geq 1$ such that, if $h \in (0,
  h_0]$, $x_k \in B_r(x_*)$, $v_{k-1} \in B_{C_3 r}(v_*)$,
  $\|v_{k-1}\| = 1$, then Step (3) of Algorithm 3 terminates with
  outputs $v_k \in B_{C_3 r}(v_*)$, $\|v_k \| = 1$, $\beta_k > 0$,
  satisfing
  \begin{align}
    \label{eq:prfls:rot_bound_1}
    \| v_k - v_h \| &\leq C_2 \b(\| x_k - x_h \| + h^2 \| v_{k-1}-v_h\|
    \b).
  \end{align}
\end{lemma}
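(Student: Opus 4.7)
The plan is to reduce the lemma to the already-established Lemma \ref{th:prfls:L1} by using the termination criterion of the Rotation subroutine. First I would argue that for $r, h_0$ sufficiently small, the Rotation subroutine is exactly the steepest-descent-on-the-sphere method analyzed in Lemma \ref{th:contrSX}, applied to the functional $v \mapsto \E_h(x_k, v)$. By Lemma \ref{th:aux_lemma_Eh}(ii), this functional has a unique local minimizer $V(x_k)$ on the unit sphere, lying $O(r+h^2)$ from $v_*$, and Lemma \ref{th:contrSX} then yields a contraction factor $\gamma < 1$ in the associated energy norm. Choosing $C_3$ large enough that $B_{C_3 r}(v_*)$ contains every admissible $v_{k-1}$ plus a neighbourhood of $V(x_k)$ that is invariant under the contraction, the rotation iterates remain in $B_{C_3 r}(v_*)$; since the sphere-gradient $(I-v\otimes v)\D_v\E_h(x_k,v) = h^2 (I-v\otimes v) H_h(x_k;v)$ satisfies $\|g(v)\| = O(\|v-V(x_k)\|)$ (cf.\ \eqref{eq:contrsd_sph:g_exp}), it decays geometrically, so termination occurs in finitely many steps.

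Next I would estimate the tolerance at which termination occurs. Since $\D_x \E_h(x_h, v_h) = 0$, Taylor expansion in both variables gives
\begin{displaymath}
\D_x \E_h(x_k, v_{k-1}) = \D_x^2 \E_h(\xi) (x_k - x_h) + \D_x \D_v \E_h(\xi)(v_{k-1} - v_h),
\end{displaymath}
for some $\xi$ on the line segment between $(x_h,v_h)$ and $(x_k,v_{k-1})$. Using the mixed-derivative estimate \eqref{eq:err_DxDvEh}, namely $\D_x\D_v\E_h = O(h^2)$, together with the bound $\D_x^2\E_h = O(1)$ from \eqref{eq:err_Dx2Eh}, yields
\begin{displaymath}
\b\| \D_x \E_h(x_k, v_{k-1}) \b\| \leq C \b( \|x_k - x_h\| + h^2 \|v_{k-1} - v_h\| \b).
\end{displaymath}
For $r, h_0$ small enough the right-hand side is below $\mathrm{TOL}_v^{\mathrm{hi}}$, so the tolerance used inside Rotation is exactly $\| \D_x \E_h(x_k, v_{k-1}) \|$, and the termination condition guarantees $\|(I - v_k \otimes v_k) H_h(x_k; v_k)\| \leq C(\|x_k - x_h\| + h^2 \|v_{k-1} - v_h\|)$. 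Applying Lemma \ref{th:prfls:L1} with $v = v_k$ and combining with this bound immediately produces \eqref{eq:prfls:rot_bound_1}.

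The main obstacle is the bookkeeping of neighbourhoods in the first step: the contraction radius in Lemma \ref{th:contrSX} is centred at $V(x_k)$, not at $v_*$, and $V(x_k)$ moves as $x_k$ varies. I would therefore need to choose $r, h_0, C_3$ in a specific order — $r$ and $h_0$ small enough to guarantee that $\|V(x_k) - v_*\|$ is a small fraction of $r$, then $C_3$ large enough so that the forward-invariant contraction ball around $V(x_k)$ is contained in $B_{C_3 r}(v_*)$, and finally $r$ possibly shrunk further to ensure the admissible $v_{k-1}$ lies in this invariant ball. The $h^2$ factor in the conclusion, which is crucial for the overall convergence proof, comes exclusively from the $O(h^2)$ size of the mixed partial $\D_x\D_v\E_h$ in the Taylor expansion above, so the key analytic input is the identity \eqref{eq:err_DxDvEh}.
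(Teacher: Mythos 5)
Your proposal follows the paper's route closely: contraction of the rotation iterates via Lemma \ref{th:contrSX}, a Taylor expansion of the termination tolerance $\|\D_x\E_h(x_k,v_{k-1})\|$ isolating the $O(h^2)$ mixed partial, and then Lemma \ref{th:prfls:L1}. One subtlety worth flagging: the paper applies Lemma \ref{th:contrSX} not to $v\mapsto\E_h(x_k,v)$ as you propose, but to the rescaled functional $G(v):=h^{-2}\b(\E_h(x_k,v)-\E_h(x_k,V(x_k))\b)$. This rescaling is not cosmetic. The Rotation subroutine's search direction $s=-(I-v\otimes v)H_h(x_k;v)$ is (minus) the projected gradient of $G$, not of $\E_h(x_k,\cdot)$, and $\D^2 G=\D^2 E(x_k)+O(h^2)$ is bounded below uniformly in $h$, whereas $\D_v^2\E_h(x_k,\cdot)=O(h^2)$. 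Applying Lemma \ref{th:contrSX} to $\E_h(x_k,\cdot)$ directly would drive the positive-definiteness constant $\mu$ to zero and (since $s$ carries an $h^{-2}$ factor) the effective step-length cap $\bar\alpha$ to infinity as $h\to 0$, so the contraction factor supplied by the lemma would not obviously stay bounded away from $1$ uniformly in $h$ — which is exactly what the rest of the convergence argument needs. With the $h^{-2}$ normalization this issue disappears and the constants become $h$-uniform. Apart from this scaling choice, your decomposition, the use of the $O(h^2)$ bound on $\D_x\D_v\E_h$ from \eqref{eq:err_DxDvEh}, and the final appeal to Lemma \ref{th:prfls:L1} match the paper's proof step for step.
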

\vspace{-4mm}
\begin{proof}
  Let $G(v) := h^{-2}(\E_h(x_k; v) - \E_h(x_k; V(x_k)))$, then each
  step of the Rotation Algorithm is a steepest descent step of $G$ on
  the manifold $S_X := \{\|v\|=1\}$.  We need to ensure that these
  iterations do not ``escape'' from the minimiser.

  Lemma \ref{th:contrSX} (with $F(v) = G(v)$ and $v_* \equiv V(x_k)$) 
  implies that each
  such step is a contraction towards $V(x_k)$ with respect to the norm
  $\|\cdot\|_H$ induced by the operator
  \begin{displaymath}
    H := (I-V\otimes V) \D^2 G(V) (I-V\otimes V) - (\D G(V) \cdot V) I,
  \end{displaymath}
  where $V \equiv V(x_k)$; provided that $r$ is sufficiently small and
  $H$ is positive definite.

  To see that the latter is indeed true, we recall from 
 \eqref{eq:defn:errorH} and \eqref{eq:defn:dtwoh} that
  \begin{align*}
  \D G(V(x_k))  &= \D^2 E(x_k) V(x_k) + O(h^2) \quad \mbox{and} \quad
  \D^2 G(V(x_k)) = \D^2 E(x_k) + O(h^2)
  \end{align*}
  and from Proposition \ref{th:dimer_saddle} and Lemma
  \ref{th:aux_lemma_Eh} that 
  \begin{equation}
  V(x_k) = v_* + O(h^2+r),
  \label{eq:vdiff}
  \end{equation}
  and hence,
  \begin{align*}
    H &= (I-v_*\otimes v_*) \D^2 E(x_*) (I-v_*\otimes v_*) - \b( (\D^2
    E(x_*) v_*) \cdot v_* \b) I  + O(h^2+r)\\
    &=  (I-v_*\otimes v_*) \D^2 E(x_*) - \lambda_*
    I + O(h^2 + r).
  \end{align*}
  Since $(x_*, v_*, \lambda_*)$ is an index-1 saddle, $(I-
  v_*\otimes v_*) \D^2 E(x_*)$ is positive definite in
  $\{v_*\}^\perp$, and $\lambda_* < 0$. Thus, for $h, r$ sufficiently
  small, $H$ is positive definite as required.

  From Lemma \ref{th:contrSX}, it follows that all iterates
  $v^{(j)}_k$ of the Rotation Algorithm satisfy $\| v^{(j)}_k - V(x_k)
  \|_H \leq \|v_{k-1} - V(x_k) \|_H$. Since the eigenvalues of $H$ are
  uniformly bounded below and above, the norms $\|\cdot\|_H,
  \|\cdot\|$ are equivalent, and hence in particular
   \begin{displaymath}
    \| v_k - V(x_k) \| \leq C_7 \|v_{k-1} - V(x_k) \|
   \leq C_7 ( \|v_{k-1} - v_*  \| + \| V(x_k) - v_*\| )
   = O( h^2 + r )
   \end{displaymath}
   for some constant $C_7 > 0$, since
   $v_{k-1} \in B_{C_3 r}(v_*)$ and using \eqref{eq:vdiff}.
   Combining this with \eqref{eq:vdiff} and choosing $h_0^2 \leq r$, we deduce
   that the Rotation Algorithm terminates with an iterate $v_k$ such that
   \begin{displaymath}
    \|v_* - v_k \| \leq \|v_* - V(x_k)\| + \|v_k - V(x_k)\| 
     \leq C_4 r
   \end{displaymath}
   for some constant that depends only on $r$ but is independent of
   $v_{k-1}$ and remains bounded as $r \to 0$.  

  At termination the Rotation Algorithm guarantees the estimate
  \begin{displaymath}
    \b\| (I-v_k\otimes v_k) H_h(x_k; v_k) \b\| \leq \| \D_x \E_h(x_k,
    v_{k-1}) \|.
  \end{displaymath}
  We set $x^t = (1-t) x_h + t x_k$, $v^t = v_h + t v_{k-1}$ and
  expand
  \begin{align*}
    \b\|\D_x \E_h(x_k,
    v_{k-1})\b\| &= \bg\|\int_0^1 \B( \D_x^2\E_h(x^t, v^t) (x_k-x_h) +
    \D_v\D_x\E_h(x^t,v^t) (v_{k-1}-v_h) \B) \dt\bg\| \\
    &\leq C_2' \b( \| x_k - x_h \| + h^2 \| v_{k-1}-v_h \| \b).
  \end{align*}
  Combined with Lemma \ref{th:prfls:L1} this yields the estimate
  \eqref{eq:prfls:rot_bound_1}. 

  The statement that $v_k \in B_{C_3 r}(v_*)$ (instead of only $B_{C_4
    r}(v_*)$) is an immediate consequence of
  \eqref{eq:prfls:rot_bound_1} by ensuring that $C_3 \geq C_2 + C_3
  h^2 + C' h^4$, where $\| v_h - v_* \| \leq C' h^2$ for all $h \leq
  h_0$ from Proposition~\ref{th:dimer_saddle}.  While there is an
  interdependence between $C_3$ and $C_2$, for $r$ and $h_0$
  sufficiently small, this is clearly achievable.
\end{proof}

We now establish the existence of a minimiser of the auxiliary
functional $F_k$ under the conditions ensured by the rotation step of
Algorithm 3.

\begin{lemma}
  \label{th:prfls:L2}
  Under the conditions of Lemma \ref{th:prfls:rotation_lemma},
  possibly after choosing a smaller $r, h_0$, there exists a constant
  $C_4 > 0$, such that the functional $F_k$ defined in
  \eqref{eq:defn_aux_fcnl} has a unique minimiser $y_k \in B_r(x_*)$
  satisfying
  \begin{equation}
    \| y_k - x_h \| \leq C_4 ( r_k^2 + h^2 r_k + h^4 s_{k-1}).
    \label{eq:ykmxh}
  \end{equation}
\end{lemma}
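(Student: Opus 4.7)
The plan is to combine a residual estimate for $\D F_k$ at the dimer saddle $x_h$ with a uniform coercivity estimate for $\D^2 F_k$, and then invoke the inverse function theorem in a manner analogous to the proof of Proposition~\ref{th:dimer_saddle}. Since $F_k$ will be shown to be strictly convex in a neighbourhood of $x_h$, any critical point located by the inverse function theorem is automatically the unique local minimiser.

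For coercivity, I would apply Lemma~\ref{eq:positivity_Fh} to $F_k$, which fits that framework with the choice $\kappa = -2\lambda_k$ and $v_0 = v_k$. Since $\lambda_k = \lambda_* + O(h^2 + r_k + s_k)$ and $\lambda_*<0$, the quantity $\kappa + \lambda$ is close to $-\lambda_*>0$, and $\|v_k - V(x_k)\|$ is $O(h^2 + r_k + s_{k-1})$ by Lemma~\ref{th:prfls:rotation_lemma} combined with Lemma~\ref{th:aux_lemma_Eh}(iv). Taking $r, h_0$ sufficiently small therefore yields a uniform positive lower bound on $\D^2 F_k(x)$ for $x$ in a neighbourhood of $x_h$, which gives strict convexity and also provides the quantitative bound needed to invert $\D F_k$.

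The main work is the residual estimate at $x = x_h$. Using $\D F_k(x) = \D_x\E_h(x, v_k) - 2\bigl(v_k \cdot \D_x\E_h(x_k, v_k)\bigr) v_k - 2\lambda_k\bigl(v_k \cdot (x-x_k)\bigr) v_k$, I would expand each term around $(x_h, v_h)$. The first term uses $\D_x\E_h(x_h, v_h) = 0$ and \eqref{eq:err_DxDvEh} to give $\D_x\E_h(x_h, v_k) = O(h^2 s_k)$. For the middle term I would Taylor expand $\D_x\E_h(x_k, v_k)$ to leading order as $\D^2 E(x_*)(x_k - x_h) + O(h^2 r_k + h^2 s_k + r_k^2)$, and then exploit the eigenrelation $v_*^\t \D^2 E(x_*) = \lambda_* v_*^\t$ (up to an error $O(\|v_k - v_*\|)$ when replacing $v_*$ by $v_k$) to rewrite $v_k \cdot \D^2 E(x_*)(x_k - x_h) = \lambda_*\bigl(v_k \cdot (x_k - x_h)\bigr) + O((s_k + h^2)\,r_k)$. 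The resulting $\lambda_*$-contribution combines with the $\lambda_k$-term in $\D F_k(x_h)$ into $(\lambda_k - \lambda_*)\bigl(v_k \cdot (x_k - x_h)\bigr) = O((h^2 + r_k + s_k)\,r_k)$. Collecting all errors gives $\|\D F_k(x_h)\| = O(r_k^2 + h^2 r_k + h^2 s_k + s_k^2)$.

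The main obstacle is converting this bound into the claimed $O(r_k^2 + h^2 r_k + h^4 s_{k-1})$; the extra factor $h^2$ in front of $s_{k-1}$ is essential for the induction in Theorem~\ref{th:conv_ls}. This is achieved by substituting the sharp rotation bound $s_k \leq C_2(r_k + h^2 s_{k-1})$ from Lemma~\ref{th:prfls:rotation_lemma}, which turns $h^2 s_k$ into $O(h^2 r_k + h^4 s_{k-1})$ and $s_k^2$ into $O(r_k^2 + h^4 s_{k-1}^2)$; absorbing $s_{k-1}^2$ into $s_{k-1}$ (using that $s_{k-1}$ is bounded by a constant for $r, h_0$ small) finishes the residual bound. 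Applying the inverse function theorem to $\D F_k = 0$ in the ball centred at $x_h$, with $\D^2 F_k$ acting as the bounded invertible linearisation, then yields a unique critical point $y_k$ satisfying \eqref{eq:ykmxh}, which by coercivity is the unique minimiser in $B_r(x_*)$ (after possibly shrinking $r, h_0$ to ensure $y_k \in B_r(x_*)$).
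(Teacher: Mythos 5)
Your proof is correct and follows the same high-level strategy as the paper: bound the residual $\|\D F_k(x_h)\|$, establish coercivity of $\D^2 F_k$, and invoke the inverse function theorem. The paper's proof differs in two (minor) technical respects. For coercivity, the paper computes $\D^2 F_k(x_h) = \D_x^2\E_h(x_h,v_k)-2\lambda_k\, v_k\otimes v_k = \D^2 E(x_*)-2\lambda_* v_*\otimes v_* + O(h^2+r_k)$ directly and identifies it with the (positive definite) operator $A=(I-2v_*\otimes v_*)\D^2 E(x_*)$, whereas you route through Lemma~\ref{eq:positivity_Fh}; both work, but the paper's direct route avoids having to track the dependence of that lemma's constant $C(x_0,r,h_0)$ on $x_0=x_k$ across iterations, which you should check is uniform (it is, since $E\in C^4$). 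For the residual cancellation, the paper exploits the exact dimer-saddle identity $H_h(x_h;v_h)=\lambda_h v_h$ together with $\D_x^2\E_h(x_h,v_h)v_h=H_h(x_h;v_h)+O(h^2)$, so the two contributions cancel exactly to within $O(h^2 r_k)$; you instead cancel against $\D^2 E(x_*)v_*=\lambda_* v_*$ and absorb the remaining $(\lambda_k-\lambda_*)(v_k\cdot(x_k-x_h))$ term — this works because $\lambda_k-\lambda_*=O(h^2+r_k+s_k)$, but it is slightly less tidy since the cancellation is only approximate. The crucial step — substituting $s_k\lesssim r_k+h^2 s_{k-1}$ from Lemma~\ref{th:prfls:rotation_lemma} and absorbing $s_{k-1}^2$ into $s_{k-1}$ to produce the $h^4 s_{k-1}$ term — you identify and execute correctly; the paper performs this substitution earlier in the expansion, but the result is the same.
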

\vspace{-5mm}
\begin{proof}
  We begin by estimating the residual
  \begin{align*}
    \D F_k(x_h) &= \D_x \E_h(x_h, v_k) - 2 (\D_x \E_h(x_k, v_k) \cdot
    v_k) v_k + 2 \lambda_k ((x_k-x_h) \cdot v_k) v_k,
  \end{align*}
  where $\lambda_k = H_h(x_k; v_k) \cdot v_k$. We consider each constituent term
  in this expression in turn; we expand about $(x_h, v_h)$, and use the 
  identities \eqref{eq:defn:Hh},  \eqref{eq:dimer_saddle_system} and
 \eqref{eq:nablavxh}
  This gives
  \begin{align*}
   v_k &= v_h + O(s_k) \\
   \D_x \E_h(x_h, v_k) &=
     \D_x \D_v \E_h(x_h, v_h) (v_k - v_h) + O( s_k^2 ) \\
  \D_x \E_h(x_k, v_k) &=
    \D_x^2 \E_h(x_h, v_h) (x_k - x_h) +
    \D_x\D_v \E_h(x_h, v_h)(v_k - v_h) + O(r_k^2) + O(s_k^2) \\
   &= \D_x^2 \E_h(x_h, v_h) (x_k - x_h) + O(h^2 s_k)
    + O(r_k^2) + O(s_k^2),  \\
\D_x \E_h(x_k, v_k) \cdot v_k v_k &=
  ( \D_x^2 \E_h(x_h, v_h) (x_k - x_h) +
    \D_x\D_v \E_h(x_h, v_h)(v_k - v_h) ) \cdot v_h v_h \\
   & \qquad + O(r_k^2) + O(s_k^2) + O( r_k s_k) \\
    &=  \D_x^2 \E_h(x_h, v_h) (x_k - x_h) \cdot v_h v_h
   + O(h^2 s_k) + O(r_k^2) + O(s_k^2) + O( r_k s_k) 
   \\
   H_h(x_k;v_k) &= H_h(x_h;v_h) + O(r_k) + O(s_k) \\
   \lambda_k &= \lambda_h + 
    v_k \cdot H_h(x_k;v_k) - v_h \cdot H_h(x_h;v_h) 
   = \lambda_h + O(r_k) + O(s_k) \\
   \lambda_k ((x_k-x_h) \cdot v_k) v_k 
   &= (\lambda_h + O(r_k) + O(s_k))((x_k-x_h)\cdot v_k) v_k \\
   & = \lambda_h ((x_k-x_h)\cdot v_h) v_h + O(r_k^2) + O(r_k s_k).
  \end{align*}
Thus since \eqref{eq:dimer_saddle_error} and our assumption that
$v_{k-1} \in B_{C_3 r}(v_*)$ ensure that $s_{k-1} = O(1 + h_0^2)$, while
\eqref{eq:prfls:rot_bound_1} implies that
 $s_k = O(r_k) + O(h^2 s_{k-1})$, we combine the above to obtain
  \begin{align*}
    \D F_k(x_h) &= 
    - 2 \b[ (\D_x^2 \E_h(x_h, v_h) (x_k-x_h))
    \cdot v_h \b] v_h + 2 \lambda_h ((x_k-x_h) \cdot v_h) v_h \\
    & \qquad + O\b(r_k^2 + h^2 r_k + h^4 s_{k-1}\b),
  \end{align*}
  Next, we note that, by definition of $\E_h$, 
 $\D_x^2 \E_h(x_h, v_h) v_h = \D^2 E(x_h)v_h + O(h^2)$,
  and thus from \eqref{eq:defn:errorH} that
  $\D_x^2 \E_h(x_h, v_h) v_h = H_h(x_h; v_h) +  O(h^2)$. 
 Hence applying \eqref{eq:dimer_saddle_system}, 
  \begin{align}
    \D F_k(x_h) &= \b[- 2 H_h(x_h; v_h) \cdot (x_k - x_h)  + 2 \lambda_h
    (x_k - x_h) \cdot v_h \b] v_h + O\b(r_k^2 + h^2 r_k + h^4 s_{k-1}\b) 
    \notag \\
    &= O\b(r_k^2 + h^2 r_k + h^4 s_{k-1}\b). \label{eq:dFk}
  \end{align}
  Finally, we observe that $\D^2 F_k(x_h)$ is positive definite, since
  \begin{align}
    \notag \D^2 F_k(x_h) &= \D_x^2 \E_h(x_h, v_k) - 2 \lambda_k v_k
    \otimes
    v_k \\
    \notag
    &= \D_x^2 \E_h(x_h, v_h) - 2 \lambda_h v_h \otimes v_h + O(r_k) \\
    \label{eq:expansion_D2Fk}
    &= \D^2 E(x_*) - 2 \lambda_* v_* \otimes v_* + O(h^2+r_k),
  \end{align}
  which immediately implies that, for $r, h_0$ sufficiently small,
  $\D^2 F_k(x_h)$ is an isomorphism with uniformly bounded inverse. 

  Thus an application of the inverse function theorem to $\D F_k$ at $y_k$
  using \eqref{eq:dFk}  yields the stated result.
\end{proof}

We now turn towards analysing the linesearch for $x$. Recall the
definition of the energy norm $\|u\|_* := \sqrt{u \cdot
  ((I-2v_*\otimes v_*) \D^2 E(x_*) u)}$, which is equivalent to
$\|\cdot\|$. In particular, 
\begin{equation}
\label{eq:equiv}
\mu^{1/2} \|u\| \leq \|u\|_* \leq \|\D^2 E(x_*)\| \|u\|
\quad \mbox{where} \quad
\mu: = \min( - \lambda_*,\mu_*) > 0.
\end{equation}

\begin{lemma}
  \label{th:prfls:contr_x}
  There exists $r, h_0, \underline\alpha \in (0, \alpha_0]$ and
  $\gamma_* \in (0, 1)$, such that, if $h \in (0, h_0]$, $x_k \in
  B_r(x_*), v_k \in B_{C_3r}(v_*)$ and $\alpha_{k-1} \geq
  \underline\alpha$, then 
  \begin{displaymath}
    \alpha_k \geq \underline\alpha \qquad \text{and} \qquad
    \| x_{k+1} - y_k \|_* \leq \gamma_* \|x_k - y_k \|_*,
  \end{displaymath}
  where $y_k$ is the minimiser of $F_k$ established in
  Lemma \ref{th:prfls:L2}.
\end{lemma}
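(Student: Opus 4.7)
The plan is to apply the Euclidean steepest descent contraction result, Lemma \ref{th:contrsd_eucl}, to the auxiliary functional $F_k$ around its local minimiser $y_k$. Three ingredients are needed: (a) strong convexity of $F_k$ near $y_k$ with the induced energy norm close to $\|\cdot\|_*$; (b) a uniform lower bound on the step size $\alpha_k$; and (c) verification that the iterate $x_k$ lies in the neighbourhood required by Lemma \ref{th:contrsd_eucl}.

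For ingredient (a), I would use \eqref{eq:expansion_D2Fk} combined with the identity $(I-2v_*\otimes v_*) \D^2 E(x_*) = \D^2 E(x_*) - 2\lambda_* v_* \otimes v_*$, which holds because $v_*$ is an eigenvector of $\D^2 E(x_*)$ with eigenvalue $\lambda_*$, and the bound $\|y_k - x_h\| = O(r_k^2 + h^2 r_k + h^4 s_{k-1})$ from Lemma \ref{th:prfls:L2}, to obtain
\begin{equation*}
  \D^2 F_k(y_k) = (I-2v_*\otimes v_*) \D^2 E(x_*) + O(h^2 + r).
\end{equation*}
Since the leading operator has spectrum bounded below by $\mu := \min(-\lambda_*, \mu_*) > 0$, for $r, h_0$ small enough $\D^2 F_k$ is uniformly positive definite in a ball about $y_k$, and the norm it induces differs from $\|\cdot\|_*$ only by constants tending to $1$ as $r, h_0 \to 0$.

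For ingredient (b), I would show that both termination criteria of the inner While loop hold whenever $\alpha$ drops below a fixed threshold $\alpha^*$, independent of the iterate. The Armijo condition follows from the Taylor expansion $F_k(x_k + \alpha p) = F_k(x_k) - \alpha \|p\|^2 + O(\alpha^2 \|p\|^2)$, whose constant is uniformly bounded since $F_k \in C^4$ with derivatives controlled in the neighbourhood under consideration; hence it holds for all $\alpha \leq \alpha^\Theta$ with some $\alpha^\Theta > 0$. The sanity criterion holds for all $\alpha \leq \alpha^\Psi$ for some $\alpha^\Psi > 0$ by local Lipschitz continuity of $x \mapsto (I-v_k\otimes v_k) H_h(x; v_k)$, combined with $\Psi > 1$ and the Rotation algorithm's termination guarantee $\|(I-v_k\otimes v_k) H_h(x_k; v_k)\| \leq \|\D_x \E_h(x_k, v_{k-1})\|$. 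Taking $\alpha^* := \min(\alpha^\Theta, \alpha^\Psi)$ and $\underline\alpha := \alpha^*/2$, the halving backtracking mechanism, together with the initialisation $\alpha \geq \min(\alpha_{\rm max}, 2\alpha_{k-1}) \geq 2\underline\alpha$, yields $\alpha_k \geq \underline\alpha$ by induction.

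Finally, for ingredient (c), the bounds $\|x_k - x_*\| \leq r$ and $\|y_k - x_*\| \leq \|y_k - x_h\| + \|x_h - x_*\| = O(r^2 + h^2)$ place $x_k$ in an arbitrarily small neighbourhood of $y_k$ for $r, h_0$ small. Applying Lemma \ref{th:contrsd_eucl} to $F_k$ at $y_k$ with step sizes in $[\underline\alpha, \alpha_{\rm max}]$ yields a contraction $\|x_{k+1} - y_k\|_{F_k} \leq \gamma \|x_k - y_k\|_{F_k}$ in the energy norm induced by $\D^2 F_k(y_k)$, and the norm equivalence established in (a) transfers this to a contraction in $\|\cdot\|_*$ with some $\gamma_* \in (\gamma, 1)$. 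The main obstacle is the second termination criterion on the rotation residual, which is not standard for steepest descent methods; however, it turns out to be benign because of the slack $\Psi > 1$ and local Lipschitz continuity of the residual in $x$.
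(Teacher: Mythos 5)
Your proposal follows essentially the same route as the paper: show $\D^2 F_k(y_k)$ is a uniform $O(h^2+r)$ perturbation of $(I-2v_*\otimes v_*)\D^2 E(x_*)$, obtain a uniform lower bound $\underline\alpha$ on the accepted step length, apply Lemma~\ref{th:contrsd_eucl} to $F_k$ centred at $y_k$, and transfer the resulting contraction to the $\|\cdot\|_*$ norm via the norm equivalence. The one place you are more explicit than the paper is the treatment of the second (sanity-check) termination condition in Step (6): the paper dispatches it with a brief appeal to Lipschitz continuity, while you spell out that at $\alpha=0$ the rotation residual is at most $\|\D_x\E_h(x_k,v_{k-1})\|$ (by the Rotation termination criterion) and that the slack factor $\Psi>1$ plus local Lipschitz continuity of $x\mapsto(I-v_k\otimes v_k)H_h(x;v_k)$ gives a uniform threshold $\alpha^\Psi$; this is the right reasoning and fills in a step the paper leaves terse.
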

\begin{proof}
  We begin by noting that, for any $r > 0$, the norms $\|\D^2
  F_k(x)\|$ are uniformly bounded among all choices of $x_k \in
  B_{r}(x_*)$, $x \in B_{r+1}(x_*)$. This is straightforward to
  establish.

  Therefore, there exists $\underline{\alpha} > 0$ such that, for $x_k
  \in B_{r}(x_*)$ and for any $\alpha \in (0, 2\underline\alpha]$, the
  conditions in Step (6) of Algorithm 3 are met (this includes an
  Armijo condition for $F_k$) since $\nabla F_k$ is Lipschitz in a
  neighbourhood of $x_k$ \cite[Thm.2.1]{GoulLeyf03}.  It is no
  restriction of generality to require $\underline\alpha \leq
  \alpha_0$. In particular, $\alpha_k \geq \underline\alpha$.

  For $r, h_0$ sufficiently small, we have $y_k \in B_r(x_*)$ as
  well. Upon choosing $r$ sufficiently small, $u\cdot( \D^2 F_k(y) u)
  \geq \mu/2 \|u\|^2$ for all $u \in X$, $y \in B_{r}(x_*)$. Thus, we
  can apply Lemma~\ref{th:contrsd_eucl} (with $x_* \equiv y_k$) to
  deduce that, for $r$ sufficiently small, the step $x_{k+1} = x_k -
  \alpha_k \D F(x_k)$ is a contraction with a constant $\gamma_1$ that
  is independent of $x_k, v_k$. That is,
  \begin{displaymath}
    (x_{k+1} - y_k) \cdot \b[\D^2 F_k(y_k) (x_{k+1}-y_k) \b] \leq
    \gamma_1^2 (x_{k} - y_k) \cdot \b[\D^2 F_k(y_k) (x_{k}-y_k) \b], 
  \end{displaymath}
  Recalling from \eqref{eq:ykmxh} and \eqref{eq:expansion_D2Fk} 
  that $\D^2 F_k(y_k) = (I-2
  v_*\otimes v_*) \D^2 E(x_*) + O(r+h^2)$ we find that, for $r, h_0$
  sufficiently small, 
  \begin{equation}
    \label{eq:prfls:contr1}
    \| x_{k+1} - y_k \|_* \leq \gamma_* \| x_k - y_k \|_*,
  \end{equation}  
  where $\gamma_* \in [\gamma_1, 1)$, again independent of $x_k, v_k$,
  but depending on $r, h_0$.
\end{proof}
\noindent We have now assembled all prerequisites required to complete the proof
of Theorem~\ref{th:conv_ls}. 

Inspired by Lemma \ref{th:prfls:contr_x},
our aim is to prove that, for $r$ sufficiently small, there exists
$\gamma \in (0, 1)$ such that, for all $j \geq 0$,
\begin{equation}
  \label{eq:prfls:main_goal}
  r_j^* + h^2 s_{j-1} \leq \gamma^j (r_0^*+h^2 s_{-1}) =: \gamma^j t_0,
\end{equation}
where $\gamma := \smfrac12 (\gamma_*+1)$, $r_k^* :=
\|x_k - x_h\|_*$ and $s_k := \|v_k - v_h \|$.

A consequence of \eqref{eq:prfls:main_goal} would be that there exists
a constant $c$ such that $\|x_j-x_*\| \leq c r =: \hat{r}$. Thus,
under the assumptions of the Theorem, let $r, h_0$ be chosen
sufficiently small so that Proposition \ref{th:dimer_saddle}, and
Lemmas \ref{th:prfls:L1}, \ref{th:prfls:rotation_lemma},
\ref{th:prfls:L2} and~\ref{th:prfls:contr_x} apply with $r$ replaced
by $\hat{r}$. 

We now begin the induction argument adding to
\eqref{eq:prfls:main_goal} the conditions that 
\begin{equation}
  \label{eq:prflj:main_goal_add}
  v_{j-1} \in B_{C_3
    r}(v_*) \qquad \text{and} \qquad \alpha_{j} \geq \underline\alpha,
\end{equation}
where $C_3 \geq 1$ is the constant from Lemma
\ref{th:prfls:rotation_lemma} and $\underline\alpha$ the constant from
Lemma \ref{th:prfls:contr_x}. Clearly \eqref{eq:prfls:main_goal} and
\eqref{eq:prflj:main_goal_add} hold for $j = 0$. Suppose that they
hold for $j = 0, \dots, k$, where $k \geq 0$.

The choice of $r$ implies that $x_k \in B_r(x_*)$ again, and Lemma
\ref{th:prfls:rotation_lemma} implies that $v_k \in B_{C_3
  r}(v_*)$. Thus, the first condition in
\eqref{eq:prflj:main_goal_add} is established for $j = k+1$.

Applying Lemma \ref{th:prfls:contr_x} we obtain the second condition
in \eqref{eq:prflj:main_goal_add} for $j = k+1$, and in addition that
\begin{displaymath}
  \| x_{k+1} - y_k \|_* \leq \gamma_* \| x_k - y_k \|_*,
\end{displaymath}
where $y_k$ is the minimiser of $F_k$ established in Lemma
\ref{th:prfls:L2}.  Using \eqref{eq:prfls:contr1}, the fact that
$\gamma_* < 1$ and Lemma \ref{th:prfls:L2} 
we therefore deduce that there exists a constant
$C_5$ which depends on $C_4$ and on the norm-equivalence between
$\|\cdot\|$ and $\|\cdot\|_*$, such that
\begin{align*}
  \| x_{k+1} - x_h \|_* &\leq \|x_{k+1} - y_k \|_* + \| y_k - x_h
  \|_* \\
  & \leq \gamma_* \|x_k - y_k \|_* + \| y_k - x_h \|_* \\
  & \leq \gamma_* \|x_k - x_h \|_* + 2 \| y_k - x_h \|_* \\ 
  & \leq (\gamma_* + C_5 h^2 + C_5 r_k) \|x_k - x_h \|_* + C_5 h^4 \|
  v_{k-1} - v_h \|.
\end{align*}

Adding $h^2 \| v_k - v_h \|$ to both sides of the inequality and
applying \eqref{eq:prfls:rot_bound_1} and \eqref{eq:equiv} we thus obtain
\begin{align*}
  r_{k+1}^* + h^2 s_k &\leq (\gamma_* + C_5 h^2 + C_5 r_k) r_k^* +
  h^2 s_k + C_5 h^4 s_{k-1} \\
  &\leq \b(\gamma_* + C_5 h^2 +\mu^{-1/2}C_2 h^2 + C_5 (c+1) r\b) r_k^* + 
  (C_5+C_2) h^4  s_{k-1}.
\end{align*}
Recalling that $\gamma = \smfrac12 (\gamma_* + 1)$, choosing $h_0, r$
sufficiently small, we obtain that 
\begin{displaymath}
  r_{k+1}^* + h^2 s_k \leq \gamma (r_k^* + h^2 s_{k-1}).
\end{displaymath}
This establishes \eqref{eq:prfls:main_goal} for $j = k+1$ and thus
completes the induction argument.

In summary, we have proven that \eqref{eq:prfls:main_goal} and
\eqref{eq:prflj:main_goal_add} hold for all $j \geq 0$.  As a first
consequence, we obtain that $r_k := \|x_k - x_h\| \leq 
\mu^{-1/2} \|\D^2 E(x_*)\| \gamma^k (r_0 + h^2 s_{-1})$
using \eqref{eq:equiv},
which in particular establishes the first part of \eqref{eq:conv_ls_maineq}.

To obtain a convergence rate for $v_k$ we combine
\eqref{eq:prfls:rot_bound_1} and \eqref{eq:prfls:main_goal}, to obtain
\begin{displaymath}
  \| v_k - v_h \| \leq C_6 (r_k^* + h^2 s_{k-1}) \leq C_6 \gamma^k t_0
\leq C_6 \|\D^2 E(x_*)\|  \gamma^k (r_0 + h^2 s_{-1}),
\end{displaymath}
for a constant $C_6$. Choosing $C = 2 \max( C_6, \mu^{-1/2}) \|\D^2 E(x_*)\|$
completes the proof of Theorem
\ref{th:conv_ls}.





\subsubsection{Proof of Case (i)}
\label{sec:prf_case_i}
The proof of Case (ii) establishes that, for as long as we have $\D_x
  \E_h(x_k, v_{k-1}) \neq 0$, the iterates are well-defined and $\|
  x_k - x_h \| + \| v_k - v_h \| \leq C r$ for some suitable constant
  $C$. We now drop this assumption and instead suppose that, at the
  $\ell$th iterate, $\D_x \E_h(x_\ell, v_{\ell-1}) = 0$. In this case,
  we can apply the following lemma.


\begin{lemma}
  \label{eq:min_Eh_x}
  Let $(x_*, v_*, \lambda_*)$ be an index-1 saddle, then there exist
  $r, h_0, C > 0$ such that, for all $h \in (0, h_0]$ and for all $v
  \in S_X$, there exists a unique $x_{h,v} \in B_r(x_*)$ such that
  $\D_x \E_h(x_{h,v}, v) = 0$. Moreover, $\| x_{h,v} - x_h \| \leq C h^2$.
\end{lemma}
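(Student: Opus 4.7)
The plan is to apply the inverse function theorem to the map $G_v(x) := \D_x \E_h(x, v) = \tfrac12(\D E(x+hv)+\D E(x-hv))$ at the point $x = x_*$, with bounds uniform in $v \in S_X$, and then compare the resulting root $x_{h,v}$ with $x_h$ using Proposition~\ref{th:dimer_saddle}.

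First I would estimate the residual at $x_*$. Since $\D E(x_*) = 0$, a Taylor expansion of $\D E$ about $x_*$, together with the fact that the odd-order terms in $h$ cancel by symmetry, yields
\begin{equation*}
G_v(x_*) = \D E(x_*) + \tfrac{h^2}{2}\,\D^3 E(x_*)\cdot(v\otimes v) + O(h^4) = O(h^2),
\end{equation*}
with a constant that is uniform in $v \in S_X$ because $\|v\| = 1$ and $\D^j E$ is bounded on a neighbourhood of $x_*$.

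Next, the linearisation satisfies
\begin{equation*}
\D G_v(x_*) = \tfrac12\bigl(\D^2 E(x_*+hv)+\D^2 E(x_*-hv)\bigr) = \D^2 E(x_*) + O(h^2),
\end{equation*}
again with an $O(h^2)$ bound in operator norm that is uniform in $v \in S_X$. Because $x_*$ is an index-1 saddle, $\D^2 E(x_*)$ is an isomorphism on $X$ (its spectrum is contained in $\{\lambda_*\} \cup [\mu_*,\infty)$ with $\lambda_* < 0 < \mu_*$). Hence for all $h$ sufficiently small, $\D G_v(x_*)$ is an isomorphism with $\|\D G_v(x_*)^{-1}\|$ bounded by a constant $M$ independent of $v$ and $h$.

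The quantitative inverse function theorem, applied uniformly in $v \in S_X$ with these constants, then yields $r, h_0 > 0$ and a constant $C'$ (all independent of $v$) such that for every $h \in (0, h_0]$ and every $v \in S_X$ there is a unique $x_{h,v} \in B_r(x_*)$ solving $G_v(x_{h,v}) = 0$, together with the estimate
\begin{equation*}
\|x_{h,v} - x_*\| \leq M \|G_v(x_*)\| + \text{higher order} \leq C' h^2.
\end{equation*}
Combining this with the bound $\|x_h - x_*\| \leq Ch^2$ from Proposition~\ref{th:dimer_saddle} via the triangle inequality yields $\|x_{h,v} - x_h\| \leq C h^2$ after relabelling the constant.

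The only mildly delicate point is the uniformity of the inverse function theorem constants over the (possibly non-compact) sphere $S_X$; but this is handled entirely by the fact that the $O(h^2)$ error terms in the two expansions above depend on $v$ only through $\|v\| = 1$ and through $\D^j E$ evaluated on the fixed compact ball $\bar B_{2h_0}(x_*)$, which gives a genuine $v$-independent constant.
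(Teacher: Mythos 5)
Your proof is correct and follows exactly the same route as the paper, which proves this lemma as an immediate corollary of the expansion \eqref{eq:err_DxEh} (i.e.\ $\D_x\E_h(x,v)=\D E(x)+O(h^2)$, uniformly in $v\in S_X$) together with the inverse function theorem applied at $x_*$ where $\D E(x_*)=0$ and $\D^2 E(x_*)$ is an isomorphism. The only difference is that you spell out the uniformity of the IFT constants over the sphere and the final triangle-inequality step against Proposition~\ref{th:dimer_saddle}, which the paper leaves implicit.
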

\begin{proof}
  This is an immediate corollary of \eqref{eq:err_DxEh} and the
  inverse function theorem.
\end{proof}

Since $\| x_\ell - x_h \| \leq C r$, Lemma \ref{eq:min_Eh_x} implies
that, in fact $\|x_\ell-x_h \| \leq C' h^2$ for some other constants
$C'$, provided that $r, h$ are chosen sufficiently small.

This concludes the proof of Theorem \ref{th:conv_ls}, Case (i).

\bibliographystyle{plain}
\bibliography{qc}
\end{document}